\def\tank#1{\protected@xdef\@thanks{\@thanks
 \protect\footnotetext[0]{#1}}}
\def\bigfoot{

 \@footnotetext}
\newcommand{\ea}{\end{array}}
\numberwithin{equation}{section}
\newtheorem{theorem}{Theorem}[section]
\newtheorem{lemma}{Lemma}[section]
\newtheorem{proposition}[theorem]{Proposition}
\newtheorem{condition}[theorem]{Condition}
\newtheorem{definition}[theorem]{Definition}
\newtheorem{remark}{Remark}
\newtheorem{rem}{Remark}[section]
\def\beq{\begin{equation}}
\def\nneq{\end{equation}}
\def\bthm{\begin{theorem}}
\def\nthm{\end{theorem}}
\def\blem{\begin{lemma}}
\def\nlem{\end{lemma}}
\def\bprf{\begin{proof}}
\def\nprf{\end{proof}}
\def\bprop{\begin{prop}}
\def\nprop{\end{prop}}
\def\brmk{\begin{rem}}
\def\nrmk{\end{rem}}
\def\bexa{\begin{exa}}
\def\nexa{\end{exa}}
\def\bcor{\begin{cor}}
\def\ncor{\end{cor}}
\def\RR{\mathbb{R}}
\def\EE{\mathbb{E}}
\def\cA{\mathcal{A}}
\def\cF{\mathcal{F}}
\def\cC{\mathcal{C}}
\def\cD{\mathcal{D}}
\def\cH{\mathcal{H}}
\def\cI{\mathcal{I}}
\def\cZ{\mathcal{Z}}
\def\cN{\mathcal{N}}
\newcommand\HHH{\mathfrak H}
\newcommand\HH{\mathcal H}
\def\ls{{\lesssim}}
\def\ee{{\mathbb E}}
\def\e{{\varepsilon}}
\def\FF{\mathcal {F}}
\newcommand{\ep}{\varepsilon}
\newcommand{\1}{{\bf 1}}
\newcommand{\Blc}{\Big(}
\newcommand{\Brc}{\Big)}
\newcommand{\Blk}{\Big[}
\newcommand{\Brk}{\Big]}
\newcommand{\lc}{\left(}
\newcommand{\rc}{\right)}
\newcommand{\lk}{\left[}
\newcommand{\rk}{\right]}
\newcommand{\lt}{\left }
\newcommand{\rt}{\right}
\title[LDP for stochastic    heat equation  with general rough noise]{A large deviation principle for the stochastic  heat equation  with general rough noise}
\author[R. Li]{Ruinan  Li}
\address[]{Ruinan  Li, School of Statistics and Information,  Shanghai University of International Business and Economics,  Shanghai, 201620,  China. }
\email{ruinanli@amss.ac.cn}
\author[R. Wang]{Ran Wang*}\thanks{*Corresponding author. E-mail: rwang@whu.edu.cn}
\address[]{Ran Wang, School of Mathematics and Statistics,  Wuhan University,  Wuhan, 430072,
China.}
\email{rwang@whu.edu.cn}
\author[B. Zhang]{Beibei Zhang}
\address[]{Beibei Zhang, School of Mathematics and Statistics,  Wuhan University,  Wuhan, 430072,
China.}
\email{zhangbb@whu.edu.cn}
\date{}
\begin{document}
\maketitle
\noindent{\bf Abstract}
 We study  Freidlin-Wentzell's large  deviation principle for  one  dimensional nonlinear stochastic heat equation  driven by a Gaussian noise:
$$
  \frac{\partial u^{\e}(t,x)}{\partial t}=\frac{\partial^2 u^{\e}(t,x)}{\partial x^2}+\sqrt{\e}\sigma(t, x, u^{\e}(t,x))\dot{W}(t,x),\quad   t> 0,\,
   x\in\RR,
$$
 where $\dot W$ is white in time and fractional in space with Hurst parameter $H\in(\frac 14,\frac 12)$.  Recently, Hu and Wang   {\cite{HW2019}} studied the well-posedness of this equation without  the technical  condition of $\sigma(0)=0$ which was previously assumed in Hu et al.   {\cite{HHLNT2017}}. We adopt a new sufficient condition proposed by Matoussi et al.   {\cite{MSZ}} for the weak convergence criterion of the large deviation principle.

\vskip0.3cm
\noindent {\bf Keywords }{Stochastic  heat equation; Fractional Brownian motion;  Large deviation principle;  Weak convergence approach.}

\vskip0.3cm

\noindent {\bf Mathematics Subject Classification (2000)}{ Primary 60F10; secondary 60H15.  }
\maketitle

\tableofcontents
\section{Introduction}

In this paper, we consider the following nonlinear  stochastic  heat  equation (SHE, for short) driven by  a  Gaussian noise which is white in time and fractional in space:
\begin{equation}\label{SHE}
  \frac{\partial u^{\e}(t,x)}{\partial t}=\frac{\partial^2 u^{\e}(t,x)}{\partial x^2}+\sqrt{\e}\sigma(t,x,u^{\e}(t,x))\dot{W}(t,x),\quad   t>0,\,
   x\in\RR.
\end{equation}
Here  $\e>0$, $u^{\e}(0,\cdot)\equiv1$, $W(t,x)$ is a centered Gaussian field with   {the}  covariance given by
\begin{equation}\label{CovW}
  \ee[W(t,x)W(s,y)]=\frac 12 \left(s\wedge t\right)\left( |x|^{2H}+|y|^{2H}-|x-y|^{2H} \right),
\end{equation}
for some $\frac 14<H<\frac 12$,  and $\dot{W}(t,x)=\frac{\partial^2 W}{\partial t\partial x}$.  The covariance of the noise $\dot{W}$ is given by
$$\ee[\dot{W}(t,x)\dot{W}(s,y)]=\delta_0 ( t-s)\Lambda\left(x-y\right),$$
where $\Lambda$ is a distribution, whose Fourier transform is the measure $\mu(d\xi)=c_{1,H}|\xi|^{1-2H}d\xi$ with
 \begin{align}
   c_{1,H}=&\,\frac{1}{2\pi}\Gamma(2H+1)\sin(\pi H).   \label{e.c1}
   \end{align}
$\Lambda$ can be formally written as $\Lambda(x-y)=H(2H-1)\times|x-y|^{2H-2}$. In addition, the measure $\mu$ satisfies the integrability condition $\int_{\RR}\frac{\mu(d\xi)}{1+|\xi|^2}<\infty$. However, the corresponding covariance $\Lambda$ is not locally integrable and fails to be non-negative when $H\in(\frac 14,\frac 12)$. It does not satisfy the classical Dalang's condition in \cite{Dalang1999} (there $\Lambda$ is given by a non-negative locally integrable function). See \cite{BJQ2015,HHLNT2017}  for  more details. For this reason, the approaches used in references \cite{Dalang1999, DQ, DaPrato1992,PZ1997,PZ2000} can not handle such rough covariance.

Recently, many authors studied the existence and uniqueness for the solutions  of  stochastic partial differential equations (SPDEs, for short) driven by Gaussian noises   {which are} rough in space, see, e.g., \cite{HHLNT2017, HHLNT2018, HW2019, LiuHuWang, SongSX2020}. We refer to \cite{hu19} and \cite{Song2018} for surveys. When the diffusion coefficient is  an affine function $\sigma(x)=ax+b$, Balan et al. \cite{BJQ2015} proved the existence and uniqueness of the mild solution to SHE \eqref{SHE}  by using   {the} technique of Fourier analysis, and they
established the H\"older continuity of the solution in \cite{BJQ2016}.

For the general nonlinear coefficient $\sigma$, which has a Lipschitz derivative and satisfies $\sigma(0)=0$, the problem of the  existence and uniqueness of the solution was proved by Hu et al. \cite{HHLNT2017}. Under this condition,  the large deviations, the moderate deviations   and the transportation inequalities  were studied in \cite{HNZ2018}, \cite{LiuJunfeng} and \cite{DaiyinLiruinan}, respectively.

In \cite{HW2019}, Hu and Wang  removed the technical and unusual condition of $\sigma(0) = 0$ and they proved the  well-posedness of the solution to Eq.\,\eqref{SHE}   under  Condition $(\mathbf{H})$ in Subsection 2.3 below.  Without the condition of $\sigma(0) = 0$, the solution is no longer in the space $\mathcal{Z}^p_T$ (see \cite{HHLNT2017} or \eqref{ZNorm} in Subsection 2.3 of this paper with $\lambda(x)\equiv1$).  To relax the restriction, Hu and Wang \cite{HW2019} introduced a decay weight (as the spatial variable $x$ goes to
infinity) to enlarge the solution space $\mathcal{Z}^p_{T}$ to a weighted space $\mathcal{Z}^p_{\lambda,T}$ for some suitable power decay function $\lambda(x)$, see Subsection 2.3 below for details.

The aim of this paper is to establish a large deviation principle (LDP, for short) for the solution $u^\e$ of \eqref{SHE} as $\e\rightarrow0$. A useful  approach of investigating the LDP is the well-known weak convergence method (see, e.g., \cite{BCD2013, BD, BD2019,  BDM2008, BDM2011, DE1997}), which is mainly based on the variational representation formula for measurable functionals of Brownian motion. For some relevant LDP results by using the weak convergence method, we refer to
  \cite{L, RZ2008, XuZhang2009} and the references therein. In particular, Liu et al. \cite{LTZ} and Xiong and Zhai \cite{XZ2018} proved the LDP for a large class of SPDEs with locally   {monotone} coefficients driven by Brownian motions or L\'evy noises, respectively. However, the frameworks of \cite{LTZ} and  \cite{XZ2018} cannot be applied to SHE \eqref{SHE} because of the spatial rough noise with $H\in (\frac{1}{4}, \frac{1}{2})$.

    Comparing with the   {LDP result}  for SHE \eqref{SHE}  under the assumption of $\sigma(0)=0$ in Hu et al. \cite{HNZ2018},   our result   and its proof  are in  the weighted space $\mathcal{Z}^p_{\lambda,T}$.   Notice that  the introduction of the weight brings many difficulties. See  Hu and Wang \cite{HW2019} for the excellent analysis.  In this paper, we adopt  a new sufficient condition for the LDP (see Condition \ref{cond1} below) which was  proposed by Matoussi et al.  \cite{MSZ}. This method was successfully applied to the study of   {LDPs} for SPDEs, see, e.g., \cite{DongWuZhang2020, HHL, HLL2021,  LSZ2020, WanfZhang2021, WuZhai2020}.

The rest of this paper is organized as follows. In Section 2, we first present the  framework introduced in \cite{HW2019} and recall   the weak convergence method  given by \cite{BDM2008, MSZ}, then we formulate the main result of the present paper.
In Section 3, the associated skeleton equation is  studied. Sections 4 and 5 are devoted to verifying the two conditions for the weak convergence criterion. Finally, we give some useful lemmas in Appendix.

We adopt the following notations throughout this paper. We always use $C_\alpha$  to denote a constant  dependent on the parameter $\alpha$,  which may change from line  to
 line. $A\lesssim B$ ($A\gtrsim B$, resp.) means that $A\leq CB$ ($A\geq CB$, resp.) for some positive   {generic constant $C$, the value of which might change from line to line},  and $A\simeq B$ means that  $A\lesssim B$ and $A\gtrsim B$.

\section{Preliminaries and main result}\label{lemmas}
In this section, we first give some preliminaries of SHE \eqref{SHE}, then we state the weak convergence criterion for the LDP in \cite{BDM2008, MSZ} and give the main result of this paper.
\subsection{Covariance structure and stochastic integration}
Recall  some notations from \cite{HHLNT2017} and \cite{HW2019}.   Denote by  $ \cD=\cD(\RR) $  the space of   {real-valued infinitely differentiable} functions with compact support on $\mathbb{R}$, and by $\cD'$  the   dual of $\cD$ with respect to the $L^2(\RR, dx)$,   { where $L^2(\RR, dx)$ is the Hilbert space of square integrable functions with respect to Lebesgue measure on $\RR$}. The Fourier transform of a function $f\in\cD$ is defined as
 $$
  \cF f(\xi):=\int_{\RR} e^{-i\xi x}f(x) dx.
$$ 

 Let   $(\Omega,\cF,\mathbb P)$ be a complete probability space and   { $\mathcal{D}(\mathbb{R}_{+}\times \mathbb{R})$ be the space of real-valued infinitely differentiable functions with compact support on $\mathbb{R}_{+}\times \mathbb{R}$.}
The noise $\dot W$ is  a zero-mean Gaussian family $\{W(\phi), \phi\in\cD(\RR_{+}\times \RR)\}$  with the covariance structure given by
 \begin{equation}\label{CovStru}
   \EE\lk W(\phi)W(\psi)\rk=c_{1,H}\int_{\RR_{+}\times \RR} \cF \phi(s,\xi) \overline{\cF \psi(s,\xi)}\cdot |\xi|^{1-2H}d\xi ds,
 \end{equation}
 where $H\in (\frac14 , \frac 12)$, $c_{1,H}$ is given in \eqref{e.c1},
 and $\cF \phi(s,\xi)$ is the Fourier transform with respect to the spatial variable $x$ of the function $\phi(s,x)$.
 Then \eqref{CovStru}  defines   a Hilbert scalar product on $\cD(\RR_{+}\times \RR) $.
Denote $\HHH$ the Hilbert space obtained by completing $\cD(\RR_{+}\times \RR) $ with respect to this scalar  product.
\begin{proposition}  {(}\cite[Proposition 2.1]{HW2019}, \cite[Theorem 3.1]{PT2000}  {)}\label{hSpaceProp}
 The  space $\HHH$ is a Hilbert space equipped with the scalar  product
\begin{equation}
\begin{split}
     \langle\phi,\psi\rangle_{\HHH}
     :=&\,c_{1,H}\int_{\RR_{+}}\left(\int_ {\RR} \cF \phi(t,\xi) \overline{\cF \psi(t,\xi)}\cdot |\xi|^{1-2H}d\xi \right)dt
     \\
     =&\,c_{2,H} \int_{\RR_+}\left(\int_{\RR^2}[\phi(t, x+y)-\phi(x)]\cdot[\psi(t, x+y)-\psi(x)]\cdot |y|^{2H-2} dxdy\right)dt,
   \end{split}
   \end{equation}
   where
   \begin{align}
   c_{2,H} :=\left(\frac 12-H\right)^{\frac{1}{2}}H^{\frac{1}{2}} \lk\Gamma\Blc H+\frac 12\Brc\rk^{-1}\left(\int_{0}^{\infty} \Blk(1+t)^{H-\frac 12}-t^{H-\frac 12}\Brk^2 dt+\frac{1}{2H}\right)^{\frac{1}{2}}.  \label{e.c3}
   \end{align}
 The space  $\cD(\RR_{+}\times \RR)$ is dense in $\HHH$.
 \end{proposition}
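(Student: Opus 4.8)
The plan is to exploit the product structure of the covariance: since the noise is white in time, the bracket in \eqref{CovStru} is an $L^2(\RR_+,dt)$-superposition of the purely spatial form $\langle f,g\rangle_0:=c_{1,H}\int_\RR \cF f(\xi)\overline{\cF g(\xi)}|\xi|^{1-2H}\,d\xi$, so it suffices to prove all three assertions for $\langle\cdot,\cdot\rangle_0$ on $\cD(\RR)$ and then integrate in $t$. There are really three things to check: that \eqref{CovStru} is a genuine (nondegenerate) inner product, so that its completion $\HHH$ is automatically a Hilbert space in which $\cD(\RR_+\times\RR)$ is dense; that $\langle\cdot,\cdot\rangle_0$ admits the physical-space representation with kernel $|y|^{2H-2}$; and that the resulting constant is the $c_{2,H}$ of \eqref{e.c3}.

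First I would verify that \eqref{CovStru} defines an inner product. Bilinearity and symmetry are immediate, and nonnegativity holds because the integrand $|\cF\phi(s,\xi)|^2|\xi|^{1-2H}$ is nonnegative with $|\xi|^{1-2H}>0$ for a.e.\ $\xi$. For nondegeneracy, if $\langle\phi,\phi\rangle_\HHH=0$ then $\cF\phi(s,\cdot)=0$ for a.e.\ $\xi$ and a.e.\ $s$; since $\phi\in\cD$ is compactly supported, $\cF\phi(s,\cdot)$ is real-analytic by the Paley--Wiener theorem, hence identically zero, so $\phi\equiv0$. Granting this, the two assertions that $\HHH$ is a Hilbert space and that $\cD(\RR_+\times\RR)$ is dense in it are nothing more than the abstract construction of the completion of an inner product space, and require no further argument.

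The analytic content is the second representation, which I would obtain by spectral calculus for the translation operator. Writing $\cF[f(\cdot+y)-f](\xi)=(e^{i\xi y}-1)\cF f(\xi)$ and invoking Plancherel (in the convention $\cF f(\xi)=\int e^{-i\xi x}f(x)\,dx$) gives
\begin{equation*}
\int_\RR|f(x+y)-f(x)|^2\,dx=\frac{1}{2\pi}\int_\RR|e^{i\xi y}-1|^2\,|\cF f(\xi)|^2\,d\xi .
\end{equation*}
Multiplying by $|y|^{2H-2}$, integrating in $y$, and interchanging the order of integration reduces everything to the scalar integral $\int_\RR|e^{i\xi y}-1|^2|y|^{2H-2}\,dy$. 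The substitution $u=\xi y$ shows this is homogeneous of degree $1-2H$, equal to $\kappa_H|\xi|^{1-2H}$ with $\kappa_H=4\int_0^\infty(1-\cos u)u^{2H-2}\,du=-4\,\Gamma(2H-1)\sin(\pi H)$; the integral converges precisely because $1-\cos u\sim u^2/2$ tames the singularity at the origin (as $2H>0$) while $u^{2H-2}$ is integrable at infinity (as $2H<1$), and this two-sided bound is exactly what legitimizes the Fubini step. Polarizing in $(f,g)$ and integrating against $dt$ then yields the identity of the two displayed expressions for $\langle\phi,\psi\rangle_\HHH$, up to the value of the leading constant.

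The delicate point, and the step I expect to be the main obstacle, is the identification of the explicit constant $c_{2,H}$ in \eqref{e.c3}. The spectral computation above produces the leading factor $2\pi c_{1,H}/\kappa_H$, and one must reconcile this with the closed form in \eqref{e.c3}, which is written in the time domain through the $L^2$-norm $\int_0^\infty[(1+t)^{H-1/2}-t^{H-1/2}]^2\,dt+\frac{1}{2H}$ of the kernel of the fractional operator $D^{1/2-H}$ underlying fractional Brownian motion. Carrying this out amounts to the $\Gamma$- and $B$-function bookkeeping of Pipiras and Taqqu, so rather than redo it I would invoke \cite[Theorem 3.1]{PT2000} for the precise value of the constant and for the finiteness of the physical-space form on $\cD$, and merely record how the white-in-time factor $\int_{\RR_+}(\cdots)\,dt$ is appended.
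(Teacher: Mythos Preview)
The paper does not supply its own proof of this proposition: it is stated with attributions to \cite[Proposition 2.1]{HW2019} and \cite[Theorem 3.1]{PT2000} and no argument follows. Your outline is essentially the standard route taken in those references---reduce to the spatial form by the white-in-time product structure, verify nondegeneracy so that the abstract completion is a Hilbert space with $\cD$ dense by construction, obtain the physical-space representation via Plancherel and the elementary identity $\int_\RR |e^{i\xi y}-1|^2|y|^{2H-2}\,dy=\kappa_H|\xi|^{1-2H}$, and defer the exact constant to the Pipiras--Taqqu bookkeeping. There is nothing to compare against in the present paper, and your sketch is correct as far as it goes; the only point I would flag is that you should be explicit that the Fubini interchange is justified not merely by the pointwise bound $1-\cos u\sim u^2/2$ but by the fact that $\phi\in\cD$ makes $|\cF\phi(s,\xi)|^2$ rapidly decreasing in $\xi$, so the double integral is absolutely convergent.
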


  {For any $t\ge0$, let $\cF_t$ be the filtration generated by $W$, that is
$$
 \cF_t:=\sigma\big\{W(\phi): \phi\in \cD([0,t]\times\RR)\big\},
$$
where $\cD([0,t]\times\RR)$ is  the space of real-valued infinitely differentiable functions on $[0,t]\times\RR$.}

\begin{definition}\label{ElemP}
An elementary process $g$ is a process given by
   \[
    g(t,x)=\sum_{i=1}^{n}\sum_{j=1}^{m} X_{i,j}\1_{(a_i,b_i]}(t)\1_{(h_j,l_j]}(x),
   \]
   where $n$ and $m$ are finite positive integers, $0\leq a_1<b_1<\cdots<a_n<b_n<\infty$, $h_j<l_j$ and $X_{i,j}$ are $\cF_{a_i}$-measurable random variables for $i=1,\dots,n$, $j=1,\dots,m$. The stochastic integral of   an elementary  process with respect to $W$ is defined as
   \begin{equation}\label{SI_Ele}
     \begin{split}
      \int_{\RR_{+}}\int_{\RR} g(t,x)W(dt,dx)
     =&\, \sum_{i=1}^{n}\sum_{j=1}^{m}  X_{i,j}W(\1_{(a_i,b_i]}\otimes \1_{(h_j,l_j]}) \\
      =&\, \sum_{i=1}^{n}\sum_{j=1}^{m}  X_{i,j}\Big[W(b_i,l_j)-W(a_i,l_j)-W(b_i,h_j)+W(a_i,h_j)\Big].
     \end{split}
   \end{equation}
 \end{definition}

  {
Hu et al. \cite[Proposition 2.3]{HHLNT2017}  extent the the notion of integral with respect to $W$ to a broad class of adapted processes in the following way.}
\begin{proposition}(\cite[Proposition 2.3]{HHLNT2017})\label{prop 2.3}
   Let $\Lambda_{H}$ be the space of predictable processes $g$ defined on $\RR_{+}\times\RR$ such that almost surely $g\in\HHH$ and $\EE[\|g\|_{\HHH}^2]<\infty$.  Then, we have that:
   \begin{itemize}
       \item[(i).]
   the space of the elementary processes
    defined in Definition \ref{ElemP} is dense in $\Lambda_{H}$;
      \item[(ii).]
 for any  $g\in\Lambda_{H}$, the stochastic integral $\int_{\RR_{+}}\int_{\RR} g(s,x)W(ds,dx)$ is defined  as the $L^2(\Omega)$-limit of  Riemann sums along    elementary processes approximating $g$
in $\Lambda_H$, and we have
    \begin{equation}\label{Isometry}
     \EE\lk\lc\int_{\RR_{+}}\int_{\RR} g(s,x)W(ds,dx)\rc^2\rk=\EE\lk\|g\|_{\HHH}^2\rk.
    \end{equation}
\end{itemize}
\end{proposition}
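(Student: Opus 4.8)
The plan is to first verify the isometry \eqref{Isometry} on elementary processes by a direct second-moment computation, then to establish the density statement (i), and finally to deduce (ii) by extending the integral from elementary processes to all of $\Lambda_H$ via the standard bounded-linear-extension argument into the complete space $L^2(\Omega)$. Throughout, the natural norm on $\Lambda_H$ is $\|g\|_{\Lambda_H}:=(\EE[\|g\|_{\HHH}^2])^{1/2}$, and the key structural input is that $\EE[W(\phi)W(\psi)]=\langle\phi,\psi\rangle_{\HHH}$ by \eqref{CovStru}, so that $\phi\mapsto W(\phi)$ is already an isometry from $\HHH$ into $L^2(\Omega)$.

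For an elementary process $g(t,x)=\sum_{i,j}X_{i,j}\1_{(a_i,b_i]}(t)\1_{(h_j,l_j]}(x)$ I would expand $\EE[(\int g\,dW)^2]$ into the double sum over $(i,j)$ and $(i',j')$ of the terms $\EE[X_{i,j}X_{i',j'}\,W(\1_{(a_i,b_i]}\otimes\1_{(h_j,l_j]})\,W(\1_{(a_{i'},b_{i'}]}\otimes\1_{(h_{j'},l_{j'}]})]$. The decisive point is that $\dot W$ is white in time: when $i\neq i'$ the two time intervals are disjoint, so the increment over the later interval is a centered Gaussian independent of the $\sigma$-field generated up to its left endpoint, a $\sigma$-field that already contains both $X$-factors and the earlier increment; conditioning therefore annihilates all cross terms. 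For $i=i'$, conditioning on $\cF_{a_i}$ extracts $\EE[X_{i,j}X_{i,j'}]$ and leaves $\EE[W(\1_{(a_i,b_i]}\otimes\1_{(h_j,l_j]})W(\1_{(a_i,b_i]}\otimes\1_{(h_{j'},l_{j'}]})]$, which equals the corresponding $\HHH$-inner product. Expanding $\EE[\|g\|_{\HHH}^2]$ directly and observing that the outer time integral in the inner product of Proposition \ref{hSpaceProp} kills the $i\neq i'$ contributions for the same disjointness reason, the two expansions coincide and \eqref{Isometry} holds for elementary $g$.

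The heart of the matter is the density claim (i), which I would carry out by successive reductions. First, truncating $g$ in space and time reduces to bounded processes supported in some $[0,T]\times[-N,N]$. Next, convolving in the spatial variable with a mollifier $\rho_\delta$ regularizes $g$ in $x$; since the associated Fourier multiplier $\cF\rho_\delta(\xi)$ tends to $1$, dominated convergence against the weight $|\xi|^{1-2H}$ in the inner product gives $g\ast\rho_\delta\to g$ in $\Lambda_H$, and spatial convolution preserves predictability. One then discretizes in time by left endpoints, setting $g_n(t,x)=\sum_k g(t_{k-1},x)\1_{(t_{k-1},t_k]}(t)$: the use of the left endpoint keeps each coefficient $\cF_{t_{k-1}}$-measurable, hence $g_n$ predictable, and convergence $g_n\to g$ in $\Lambda_H$ follows once $t\mapsto g(t,\cdot)$ is continuous in the spatial $\HHH$-norm, which can be arranged by a preliminary one-sided mollification in time that preserves adaptedness. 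Finally, each smooth spatial profile is approximated by step functions $\sum_j c_j\1_{(h_j,l_j]}$; here one must check that indicators of intervals genuinely belong to $\HHH$, which they do because the spatial norm is the homogeneous fractional norm of order $\tfrac12-H<\tfrac12$, so that step functions are dense and the approximation is legitimate. Chaining these four steps yields an elementary process arbitrarily close to $g$ in $\Lambda_H$.

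I expect the main obstacle to be the combined time-discretization and spatial step-function approximation: one must simultaneously respect adaptedness, which forces the left-endpoint Riemann sums and a causal time mollification, and control convergence in the \emph{rough} spatial norm, in which ordinary functions are only weakly regular. Verifying that interval indicators lie in $\HHH$, that step functions approximate smooth profiles in the order-$(\tfrac12-H)$ norm, and that all these estimates are uniform enough to survive the $L^2(\Omega)$ average (so that dominated convergence applies at each stage) is the delicate part. Granting (i), assertion (ii) is then immediate: the map $g\mapsto\int g\,dW$ is a linear isometry from the dense subset of elementary processes into the complete space $L^2(\Omega)$, so it extends uniquely and continuously to all of $\Lambda_H$, and passing to the limit preserves \eqref{Isometry}.
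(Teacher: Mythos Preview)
The paper does not actually give a proof of this proposition: it is quoted verbatim as \cite[Proposition~2.3]{HHLNT2017} and used as a black box, so there is no ``paper's own proof'' to compare against. Your sketch is the standard construction of the Walsh/Dalang-type stochastic integral for a Gaussian noise that is white in time and spatially correlated, and it is essentially the argument one finds in \cite{HHLNT2017} or in the general framework of \cite{DQ}: verify the isometry on elementary processes by exploiting independence of time increments, prove density of elementary processes in $\Lambda_H$ by successive truncation/mollification/discretization while preserving predictability, and extend by completeness of $L^2(\Omega)$. Your identification of the two genuinely delicate points---that interval indicators lie in the spatial space $\HH$ because the fractional exponent $\tfrac12-H$ is strictly below $\tfrac12$, and that the time discretization must use left endpoints (with a causal time mollification beforehand) to remain predictable---is accurate, and these are exactly the places where the argument for this particular covariance structure requires care beyond the classical Dalang setting.
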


  {Let $\mathcal H$  be
	  the Hilbert space obtained by completing $\mathcal D(\RR) $ with respect to the following scalar  product:
	  \begin{equation}\label{eq H product}
\begin{split}
     \langle\phi,\psi\rangle_{\mathcal H}
     =&\,c_{1,H}\int_{\RR} \cF \phi(\xi) \overline{\cF \psi(\xi)}  \cdot |\xi|^{1-2H}d\xi\\
     =&\, c_{2,H}\int_{\RR^2}[\phi(x+y)-\phi(x)]\cdot [\psi(x+y)-\psi(x)]\cdot |y|^{2H-2} dxdy, \ \, \forall \phi,\psi\in \mathcal D(\mathbb R).
        \end{split}
   \end{equation}
 By Proposition \ref{prop 2.3}, for any orthonormal basis $\{e_k\}_{k\ge1}$ of the Hilbert space $\HH$, the family of processes
\begin{equation}\label{eq int 10}
\left\{B_t^k:=\int_0^t\int_{\mathbb{R}}e_k(y)W(ds,dy)\right\}_{k\ge1}
\end{equation}
is a sequence of independent standard Wiener processes and the process
$
B_t:=\sum_{k\ge1}B_t^ke_k
$
is a cylindrical Brownian motion on $\HH$.
It is well-known that (see \cite{DaPrato1992} or \cite{DQ}) for any $\HH$-valued predictable process
 $g\in L^2(\Omega\times[0,T];\HH)$,
we can define the stochastic integral with respect to the cylindrical Wiener process $B$ as follows:					
\begin{equation}\label{eq int2}
\int_0^T g(s)dB_s:=\sum_{k\ge1}\int_0^T \langle g(s),e_k\rangle_{\HH} dB_s^k.
\end{equation}
  Note that the above series converges in $L^2(\Omega, \FF,\mathbb P)$ and the sum does not depend on
the selected orthonormal basis. Moreover, each summand, in the above series, is a classical It\^o
integral with respect to a standard Brownian motion.}

 Let $(B,\| \cdot \|_B)$ be a Banach space  with the norm $\| \cdot \|_B$.  Let   $H\in(\frac{1}{4},\frac{1}{2})$ be a fixed number.
 For  any  function $f:\RR\rightarrow B$,  denote
 \begin{equation}\label{NBNorm}
   \cN_{\frac{1}{2}-H}^{B}f(x):=\lt(\int_{\RR}\|f(x+h)-f(x)\|_B^2\cdot |h|^{2H-2}dh\rt)^{\frac 12},
 \end{equation}
 if the above quantity is finite.
 When $B=\RR$, we abbreviate the notation $\cN_{\frac{1}{2}-H}^{\RR}f$  as  $\cN_{\frac{1}{2}-H}f$.
 As in \cite{HHLNT2017}, when $B=L^p(\Omega)$, we  denote $ \cN_{\frac{1}{2}-H}^{B}$ by $\cN_{\frac{1}{2}-H,\,p}$, that is,
 \begin{equation}\label{NpNorm}
   \cN_{\frac{1}{2}-H,\,p}f( x):=\lt(\int_{\RR}\|f(x+h)-f(x)\|^2_{L^p(\Omega)} \cdot |h|^{2H-2}dh\rt)^{\frac 12}.
 \end{equation}

The following    Burkholder-Davis-Gundy's     inequality is well-known
(see,  e.g.,  \cite{HHLNT2017}).

 \begin{proposition}\label{HBDG}  {(}\cite[Proposition 3.2]{HHLNT2017}  {)}
   Let $W$ be the Gaussian noise with the covariance \eqref{CovStru}, and let    $f\in\Lambda_H$  be a predictable random field. Then, we have   {that} for any $p\geq2$,
   \begin{equation}
     \begin{split}
        \lt\|\int_{0}^{t}\int_{\RR}f(s,y) W(ds,dy)\rt\|_{L^p(\Omega)}
     \leq \sqrt{4p}c_{H}\lt(\int_{0}^{t}\int_{\RR}\lk\cN_{\frac 12-H,\,p}f(s,y)\rk^2dyds\rt)^{\frac 12},\label{e.bdg}
     \end{split}
   \end{equation}
   where $c_{H}$ is a constant depending only on $H$ and
   $\cN_{\frac 12-H,\,p}f(s,y)$ denotes the application of $\cN_{\frac 12-H,\,p} $  to the spatial variable $y$.
 \end{proposition}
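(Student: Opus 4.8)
The plan is to exploit the fact that the noise $W$ is white in time, which lets one realize the stochastic integral as a real-valued continuous martingale whose quadratic variation is controlled by the purely spatial norm $\|\cdot\|_{\HH}$; a scalar Burkholder--Davis--Gundy inequality together with Minkowski's integral inequality then delivers the claimed bound. For $p=2$ there is nothing beyond the isometry \eqref{Isometry} and the explicit norm of Proposition \ref{hSpaceProp}, so the real content is the passage from $L^2(\Omega)$ to $L^p(\Omega)$ for $p>2$.

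First I would use the cylindrical Brownian motion $B$ on $\HH$ from \eqref{eq int 10}--\eqref{eq int2} to write
$$
M_t:=\int_0^t\int_{\RR}f(s,y)\,W(ds,dy)=\int_0^t f(s,\cdot)\,dB_s,
$$
which, for $f\in\Lambda_H$, is a real-valued continuous $L^2$-martingale in $t$. Since the covariance factorizes as white-in-time tensored with $\HH$-in-space, Proposition \ref{hSpaceProp} gives $\|f\|_{\HHH}^2=\int_{\RR_+}\|f(s,\cdot)\|_{\HH}^2\,ds$, whence the quadratic variation is
$$
\langle M\rangle_t=\int_0^t\|f(s,\cdot)\|_{\HH}^2\,ds
=c_{2,H}\int_0^t\int_{\RR}\lc\cN_{\frac12-H}f(s,x)\rc^2dx\,ds,
$$
the second identity being the spatial norm \eqref{eq H product} rewritten through the definition \eqref{NBNorm} of $\cN_{\frac12-H}$.

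Next, applying the scalar Burkholder--Davis--Gundy inequality for continuous martingales in the form $\|M_t\|_{L^p(\Omega)}\le\sqrt{4p}\,\big\|\langle M\rangle_t^{1/2}\big\|_{L^p(\Omega)}$ reduces everything to estimating the quadratic-variation term, and here I would invoke Minkowski's integral inequality, valid since $p/2\ge1$:
$$
\big\|\langle M\rangle_t^{1/2}\big\|_{L^p(\Omega)}^2
=c_{2,H}\left\|\int_0^t\!\!\int_{\RR^2}|f(s,x+y)-f(s,x)|^2|y|^{2H-2}\,dy\,dx\,ds\right\|_{L^{p/2}(\Omega)}
$$
$$
\le c_{2,H}\int_0^t\!\!\int_{\RR^2}\big\|f(s,x+y)-f(s,x)\big\|_{L^p(\Omega)}^2|y|^{2H-2}\,dy\,dx\,ds
=c_{2,H}\int_0^t\!\!\int_{\RR}\lc\cN_{\frac12-H,\,p}f(s,x)\rc^2dx\,ds,
$$
where the last step uses $\big\||g|^2\big\|_{L^{p/2}(\Omega)}=\|g\|_{L^p(\Omega)}^2$ and the definition \eqref{NpNorm}. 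Combining the two estimates yields \eqref{e.bdg} with $c_H=\sqrt{c_{2,H}}$ absorbed into the constant.

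The hard part will be the rigorous justification of the quadratic-variation identity and of the martingale representation for an arbitrary $f\in\Lambda_H$: one only knows $f\in\HHH$ almost surely with $\EE\|f\|_{\HHH}^2<\infty$, so this step proceeds by approximating $f$ in $\Lambda_H$ by elementary processes (Proposition \ref{prop 2.3}(i)), for which all the identities above are elementary, and then passing to the limit via the isometry \eqref{Isometry}. By contrast the Minkowski step is routine; the only point requiring a little care is that the kernel $|y|^{2H-2}$ is non-integrable near $y=0$, but this causes no difficulty because the increments $f(s,x+y)-f(s,x)$ compensate and the final integral is finite precisely under the hypothesis that $\cN_{\frac12-H,\,p}f$ is square-integrable in $(s,x)$.
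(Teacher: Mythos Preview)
Your argument is correct and follows the standard route: represent the stochastic integral as a continuous real-valued martingale via the cylindrical Brownian motion on $\HH$, identify its quadratic variation as $\int_0^t\|f(s,\cdot)\|_{\HH}^2\,ds$, apply the scalar BDG inequality with the $\sqrt{4p}$ constant, and then push the $L^{p/2}(\Omega)$-norm inside the $(s,x,h)$-integral by Minkowski. The paper does not give its own proof of this proposition; it simply quotes it from \cite[Proposition 3.2]{HHLNT2017}, and what you have written is essentially the argument one finds there.
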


\subsection{Stochastic heat equation}

Let $\mathcal{C}([0, T]\times\RR)$ be the space of all continuous real-valued functions on $[0,T]\times\mathbb{R}$, equipped with the following metric:
\begin{equation}
  d_{\mathcal C}(u,v):=\sum_{n=1}^{\infty}\frac{1}{2^n} \max_{0\le t\le T,|x|\leq n}(|u(t,x)-v(t,x)|\wedge 1).\label{e.4.metric}
 \end{equation}

Define the weight  function
   \begin{equation}\label{lamd}
     \lambda(x):=c_H(1+|x|^2)^{H-1}\,,
 \end{equation}
where $c_H$ is a constant such that $\int_{\RR} \lambda(x) dx=1$.

For any $p\geq 2$ and $\frac 14<H<\frac 12$, we introduce a norm $\|\cdot\|_{\cZ_{\lambda,T}^p}$ for a random field $v=\{v(t,x)\}_{(t,x)\in[0,T]\times\RR}$ as follows:
 \begin{equation}\label{ZNorm}
   \|v\|_{\cZ_{\lambda,T}^p} :=\sup_{t\in[0,T]} \|v(t,\cdot)\|_{L^p_{\lambda}(\Omega\times\RR)}+\sup_{t\in[0,T]}\cN^*_{\frac 12-H,\,p}v(t),
 \end{equation}
where
  \begin{equation}\label{LpNormOmegatimesRR}
  \|v(t,\cdot)\|_{L^p_{\lambda}(\Omega\times\RR)}:=\lc\int_{\RR} \EE\lk|v(t,x)|^p\rk \lambda(x)dx\rc^{\frac 1p},
 \end{equation}
 and
 \begin{equation}\label{N*pNorm}
   \cN^*_{\frac 12-H,\,p}v(t):=\lc\int_{\RR}\|v(t,\cdot)-v(t,\cdot+h)\|^2_{L^p_{\lambda}(\Omega\times\RR)}\cdot |h|^{2H-2}dh\rc^{\frac 12}.
 \end{equation}
 Denote $\cZ_{\lambda,T}^p$  the space of  all random fields $v=\{v(t,x)\}_{(t,x)\in[0,T]\times\RR}$ such that $\|v\|_{\cZ_{\lambda,T}^p}$ is finite.

For the well-posedness of the solution and the large deviation principle, we assume the following conditions.
\begin{enumerate}
  \item[\textbf{(H)}]   Assume that $\sigma(t, x, u)\in\cC^{0,1,1}([0, T]\times\RR^2)$ (the space of all continuous functions $\sigma$, with continuous partial derivatives $\sigma'_x$, $\sigma'_u$ and $\sigma''_{xu}$), and there exists a constant $C>0$ such that

      \begin{align}\label{lineargrowthuniformlycond}
      \sup_{t\in[0,T],\,x\in\RR}\left|\sigma(t, x, u)\right|\leq C(1+|u|),\,\,\,\forall u\in\RR;
      \end{align}

      \begin{align}\label{Lipschitzianuniformlycondu}
      \sup_{t\in[0,T],\,x\in\RR}\left|\sigma(t, x, u)-\sigma(t, x, v)\right|\leq C|u-v|,\,\,\,\forall u, v\in\RR;
      \end{align}

  \begin{align}
  \sup_{t\in[0,T], \,x\in\RR,\, u\in\RR} |\sigma'_u(t,x,u)| &\leq C\,; \label{DuSigam}
  \end{align}

\begin{align}
  \sup_{t\in[0,T], \,x\in\RR} |\sigma'_x(t,x,0)| &\leq C\,; \label{DuySigam}
  \end{align}

  \begin{align}
  \sup_{t\in[0,T], \,x\in\RR, \, u\in\RR} |\sigma''_{xu}(t,x,u)| &\leq C\,. \label{DuxSigam}
  \end{align}
Moreover,
there exist some constants $p_0>\frac{6}{4H-1}$ and $C>0$ such that
  \begin{equation}\label{DuSigamAdd}
   \sup_{t\in[0,T],  \,x\in\RR}\lambda^{-\frac{1}{p_0}}(x)\left| \sigma'_u(t,x,u_1)-\sigma'_u(t,x,u_2)\right| \le C  |u_1-u_2| , \,\,\,\forall u_1,u_2\in \RR,
  \end{equation}
   where $\lambda(x)$ is the weight  function defined by \eqref{lamd}.
\end{enumerate}

\begin{remark} By the monotonicity of $\lambda(x)$, we know that if \eqref{DuSigamAdd} holds for some $p_0>\frac{6}{4H-1}$, then it also holds for all  $p\ge p_0$.
\end{remark}

 Let  $p_{t}(x):=\frac{1}{\sqrt{4\pi t}}\exp\lc-\frac{x^2}{4t}\rc$ be the heat kernel  associated with
the  Laplacian operator $\Delta$.
Recall the following   definition of  the solution to SHE \eqref{SHE} from \cite{HW2019}.
\begin{definition}  {(}\cite[Definition 1.4]{HW2019}  {)}\label{DefMildSol}
  Given the initial value $u_0(\cdot)\equiv1$, a real-valued adapted  stochastic process $u^\e$ is called a   {mild}  solution to \eqref{SHE},  if for all $t\ge 0$ and $x\in\RR$,
   \begin{equation}\label{MildSol}
     u^\e(t,x)=\,1+\sqrt{\e}\int_{0}^{t}\int_{\RR} p_{t-s}(x-y)\sigma(s,y,u^\e(s,y))W(ds,dy), \ \  \text{a.s.}.
   \end{equation}

\end{definition}

The following theorem  follows from \cite{HW2019}.
\begin{theorem}  {(}\cite[Theorem 1.6]{HW2019}  {)}\label{ModUniq}
Assume that  $\sigma$ satisfies the  hypothesis $(\mathbf{H})$. Then \eqref{SHE} admits a unique   {mild} solution in $\mathcal C([0, T]\times\RR)$ almost surely.
\end{theorem}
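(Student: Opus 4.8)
The plan is to construct the solution by a Picard iteration in the weighted Banach space $\cZ_{\lambda,T}^p$ and to obtain uniqueness by a contraction (Gronwall) argument, following the strategy of \cite{HHLNT2017} but carried through with the decay weight $\lambda$ of \cite{HW2019} that is forced upon us by allowing $\sigma(0)\ne 0$. Set $u_0(t,x)\equiv 1$ and define iteratively
\[
u_{n+1}(t,x)=1+\sqrt{\e}\int_0^t\int_{\RR}p_{t-s}(x-y)\,\sigma\blc s,y,u_n(s,y)\brc\,W(ds,dy).
\]
The core of the argument is to bound the $\cZ_{\lambda,T}^p$-norm of the stochastic convolution. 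First I would apply the Burkholder--Davis--Gundy inequality of Proposition \ref{HBDG}, which reduces every moment estimate to controlling the spatial seminorm $\cN_{\frac12-H,p}$ of the integrand $p_{t-s}(x-\cdot)\sigma(s,\cdot,u_n(s,\cdot))$, and its weighted version $\cN^*_{\frac12-H,p}$ from \eqref{N*pNorm}.

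The decisive estimate is a product/chain-rule decomposition of $\cN_{\frac12-H,p}$ acting on the integrand. Splitting the spatial increment into a term where it falls on the heat kernel and one where it falls on $\sigma$, I obtain schematically
\[
\cN_{\frac12-H,p}\blc p_{t-s}(x-\cdot)\,\sigma(s,\cdot,u_n)\brc\ls \cN_{\frac12-H}\blc p_{t-s}(x-\cdot)\brc\,\|\sigma(s,\cdot,u_n)\|_{L^p(\Omega)}+p_{t-s}(x-\cdot)\,\cN_{\frac12-H,p}\sigma(s,\cdot,u_n).
\]
For the first factor one uses the heat-kernel seminorm bound $\cN_{\frac12-H}p_{t-s}\ls (t-s)^{(H-1)/2}$ (whose square $(t-s)^{H-1}$ is integrable in $s$ since $H>0$) together with the linear growth \eqref{lineargrowthuniformlycond}; for the second, the chain rule expresses $\partial_y[\sigma(s,y,u_n(s,y))]$ through $\sigma'_x$ and $\sigma'_u$, so that conditions \eqref{DuSigam}, \eqref{DuySigam}, \eqref{DuxSigam}, and the weighted H\"older-type bound \eqref{DuSigamAdd} on $\sigma'_u$ let me dominate $\cN_{\frac12-H,p}\sigma(s,\cdot,u_n)$ by $1+\cN^*_{\frac12-H,p}u_n(s)$. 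The weight $\lambda$ is carried through by weighted heat-kernel estimates (supplied by the appendix lemmas), which keep all spatial integrals finite even though the solution does not decay at infinity.

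Combining these yields a Gronwall-type recursion $\|u_{n+1}\|_{\cZ_{\lambda,t}^p}^p\ls 1+\e^{p/2}\int_0^t (t-s)^{\beta}\,\|u_n\|_{\cZ_{\lambda,s}^p}^p\,ds$ with an integrable singularity $\beta>-1$; this is precisely where the restriction $p_0>\frac{6}{4H-1}$ enters, ensuring that the exponents produced by the weighted estimates stay above $-1$. A standard iteration lemma then gives uniform boundedness of $\{u_n\}$ in $\cZ_{\lambda,T}^p$, and the same computation applied to $u_{n+1}-u_n$, now invoking the Lipschitz bound \eqref{Lipschitzianuniformlycondu}, shows $\{u_n\}$ is Cauchy; its limit $u$ satisfies \eqref{MildSol}, and uniqueness follows identically by estimating the difference of two solutions and applying Gronwall. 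To upgrade to a space-time continuous version in $\mathcal C([0,T]\times\RR)$ with the metric \eqref{e.4.metric}, I would establish increment moments $\EE|u(t,x)-u(t',x')|^p\ls |t-t'|^{\gamma_1}+|x-x'|^{\gamma_2}$ by the same BDG-plus-heat-kernel machinery and apply the Kolmogorov continuity criterion on each compact set. I expect the main obstacle to be the control of the spatial-increment seminorm $\cN^*_{\frac12-H,p}$ of the stochastic convolution: since the covariance $\Lambda$ is neither non-negative nor locally integrable for $H\in(\frac14,\frac12)$, the classical Walsh--Dalang isometry is unavailable and everything must be routed through $\cN_{\frac12-H,p}$; the interplay between this fractional seminorm and the weight $\lambda$, and the verification that the time exponents remain integrable exactly when $p_0>\frac{6}{4H-1}$, is the technical heart of the proof.
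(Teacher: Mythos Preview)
The paper does not supply its own proof of this theorem; it is simply quoted as \cite[Theorem 1.6]{HW2019}, a preliminary result imported from Hu and Wang. Your outline captures the essential strategy that \cite{HW2019} employs and that this paper reuses for the skeleton and controlled equations (Picard iteration in the weighted space $\cZ_{\lambda,T}^p$, BDG routed through the fractional seminorm $\cN_{\frac12-H,p}$, a product/chain-rule splitting of the integrand, and the weighted heat-kernel lemmas of the Appendix).

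One technical ingredient your sketch omits, and which both \cite{HW2019} and the present paper (see the proofs of Lemma~\ref{UniBExist} and Lemma~\ref{lemma-5-1}) rely on, is an intermediate regularization of the covariance: one first works with a smoothed inner product $\HH_\eta$ (respectively a smoothed noise $W_\eta$) for which the Picard iterates converge in $L^p_\lambda$ by an elementary Gr\"onwall argument, then establishes $\cZ_{\lambda,T}^p$-bounds \emph{uniform in $\eta$}, and finally passes to the limit $\eta\to0$ via Arzel\`a--Ascoli compactness in $\mathcal C([0,T]\times\RR)$. Proving directly that the Picard iterates are Cauchy in the full $\cZ_{\lambda,T}^p$-norm without this regularization is delicate, because the rough covariance prevents the clean $L^p_\lambda$-contraction that makes the regularized iteration work. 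Apart from this missing layer, your high-level plan is on target.
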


\subsection{A general criterion for the large deviation principle}\label{A Criteria for Large Deviations}

Let $\{u^\e\}_{\e>0} $ be a family of random variables defined on a probability space
$(\Omega,\mathcal{F},\mathbb{P})$
and taking values in a Polish space $E$.
 \begin{definition}\label{Dfn-Rate function}
     A function $I: E\rightarrow[0,\infty]$ is called a rate function on
       $E$,
       if for each $M<\infty$ the level set $\{y\in E:I(y)\leq M\}$ is a compact subset of $E$.

    \end{definition}
    \begin{definition}  \label{d:LDP}
       Let $I$ be a rate function on $E$. The sequence
       $\{u^\e\}_{\e>0}$
       is said to satisfy  a large deviation principle on $E$ with the rate function $I$, if the following two
       conditions
       hold:
\begin{itemize}
\item[(a).]   for each closed subset $F$ of $E$,
              $$
                \limsup_{\e\rightarrow 0}\e \log\mathbb{P}(u^\e\in F)\leq  {-\inf_{y\in F}}I(y);
              $$

        \item[(b).]  for each open subset $G$ of $E$,
              $$
                \liminf_{\e\rightarrow 0}\e \log\mathbb{P}(u^\e\in G)\geq-\inf_{y\in G}I(y).
              $$
              \end{itemize}
    \end{definition}

 Set   { $\mathbb{V}=C([0,T];\mathbb R^{\infty})$}
 and let $\mathbb U$ be a Polish space. Let $\{\Gamma^{\varepsilon}\}_{\varepsilon >0}$ be a family of measurable maps from $\mathbb V$ to $\mathbb U$.
We   recall a criterion for  the LDP of the family $Z^{\varepsilon}=\Gamma^{\varepsilon}( W)$ as $\varepsilon \rightarrow 0$,   {where and throughout this section $W$  is the Gaussian
process identified a sequence of independent, standard, real valued Brownian motions,  by using the representation formulae of \eqref{eq int 10}.}
 
Define the following space of stochastic processes:
\begin{equation}\label{eq: space}
\mathcal L_2:=\left\{\phi: \Omega\times [0,T]\rightarrow \mathcal H \text{  is predictable and  } \int_0^T\|\phi(s)\|_{\mathcal H}^2ds<\infty, \ \ \mathbb{P}\text{-a.s.} \right\},
\end{equation} and  denote $L^2([0,T];\mathcal H)$ the space of square integrable $\mathcal H$-valued functions on $[0,T]$. For each   {$N\geq1$}, let
\begin{equation}\label{eq: space SN}
S^N=\left\{g\in L^2([0,T];\mathcal H): L_T(g)\le N  \right\},
\end{equation}
 where
\begin{equation}\label{eq: LTg}
L_T(g):=\frac12\int_0^T\|g(s)\|_{\mathcal{H}}^2ds,
\end{equation}
 and $S^N$ is equipped with the topology of the weak convergence in $L^2([0,T];\mathcal H)$. Set $\mathbb S=\bigcup_{N\ge1} S^N$, and
$$
\mathcal U^N=\left\{g\in \mathcal L_2: g(\omega)\in S^N, \,\,\mathbb{P}  {\text{-a.s.}} \right\}.
$$
\begin{condition}\label{Aa} There exists a measurable mapping $\Gamma^0:\mathbb V\rightarrow\mathbb U$ such that the following two items hold:
\begin{itemize}
\item[(a).]  for every $N<+\infty$, the set $K_N=\left\{\Gamma^0\left(\int_0^{\cdot}   g(s)ds\right): g\in S^N\right\}$ is a compact subset of $\mathbb U$;
\item[(b).] for every $N<+\infty$ and  $\{g^\e\}_{\e>0}\subset \mathcal U^N$, if $g^\e$ converges to $g$ as $S^N$-valued random elements in distribution, then  $\Gamma^\e\left(W+\frac{1}{\sqrt\e} \int_0^{\cdot}  g^{\e}(s)ds\right)$ converges in distribution to $\Gamma^0\left(\int_0^{\cdot}  g(s)ds\right)$.
 \end{itemize}
  \end{condition}
  Let
   $I:\mathbb U\rightarrow [0,\infty]$ be defined  by
\beq\label{rate function}
I(\phi):=\inf_{\left\{g\in \mathbb{S};\,\phi=\Gamma^0\left(\int_0^{\cdot}g(s)ds\right)\right\}}  \{L_T(g)\},\  \phi\in\mathbb U,
\nneq
with the convention $\inf\emptyset=\infty$.

The following result is due to Budhiraja et al. \cite{BDM2008}.
\begin{theorem}(\cite[Theorem 6]{BDM2008})\label{thm MSZ} For  any $\e>0$, let $X^{\e}=\Gamma^\e(W)$ and suppose that Condition \ref{Aa} holds. Then,  the family $\{X^{\e}\}_{\e>0}$ satisfies an LDP with the rate function  $I$ defined by \eqref{rate function}.
\end{theorem}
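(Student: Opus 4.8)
The plan is to deduce the large deviation principle from the equivalent Laplace principle, exploiting the Bou\'e--Dupuis variational representation for functionals of the cylindrical Brownian motion $W$. First I would record that, for every bounded Borel measurable $h:\mathbb U\to\RR$, the representation of \cite{BDM2008} reads
\[
-\e\log\EE\Big[\exp\Big(-\tfrac{1}{\e}h(X^\e)\Big)\Big]
=\inf_{g\in\mathcal L_2}\EE\Big[L_T(g)+h\Big(\Gamma^\e\Big(W+\tfrac{1}{\sqrt\e}\int_0^{\cdot} g(s)\,ds\Big)\Big)\Big],
\]
which follows from the classical formula after the substitution $g=\sqrt\e\,v$. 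Since the sub-level sets $S^N$ are weakly compact and each $K_N$ is compact by Condition \ref{Aa}(a), a standard weak-compactness argument shows that $I$ defined in \eqref{rate function} is a good rate function; hence by the equivalence of the Laplace principle and the large deviation principle for good rate functions \cite{DE1997} it suffices to prove
\[
\lim_{\e\to0}\Big(-\e\log\EE\big[\exp\big(-\tfrac{1}{\e}h(X^\e)\big)\big]\Big)=\inf_{\phi\in\mathbb U}\big\{h(\phi)+I(\phi)\big\}
\]
for every bounded continuous $h$, which I would split into a matching upper and lower bound.

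For the upper bound (the easier half) I would fix $\delta>0$ and choose a deterministic $g_0\in\mathbb S$, say $g_0\in S^N$, with $L_T(g_0)+h\big(\Gamma^0(\int_0^{\cdot} g_0\,ds)\big)\le \inf_\phi\{h(\phi)+I(\phi)\}+\delta$, which is possible by the definition of $I$. Feeding the constant control sequence $g^\e\equiv g_0$ into the representation and invoking Condition \ref{Aa}(b) gives that $\Gamma^\e\big(W+\tfrac{1}{\sqrt\e}\int_0^{\cdot} g_0\,ds\big)$ converges in distribution to $\Gamma^0(\int_0^{\cdot} g_0\,ds)$; boundedness and continuity of $h$ then yield $\limsup_{\e\to0}(-\e\log\EE[\cdots])\le L_T(g_0)+h\big(\Gamma^0(\int_0^{\cdot} g_0\,ds)\big)\le\inf_\phi\{h(\phi)+I(\phi)\}+\delta$, and letting $\delta\downarrow0$ closes this half.

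The hard part will be the lower bound. Here I would select near-optimal controls $g^\e\in\mathcal L_2$ for which $\EE[L_T(g^\e)+h(\Gamma^\e(W+\tfrac1{\sqrt\e}\int_0^{\cdot} g^\e\,ds))]$ lies within $\e$ of the left-hand side; since $h$ is bounded, $\sup_\e\EE[L_T(g^\e)]<\infty$, and a truncation argument lets me assume $g^\e\in\mathcal U^N$ for some fixed $N$ at the cost of an error tending to $0$. Viewing $\{g^\e\}$ as $S^N$-valued random elements, the weak compactness of $S^N$ makes the family tight, so along a subsequence $g^\e$ converges in distribution to some $g\in\mathcal U^N$. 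Condition \ref{Aa}(b) then yields joint convergence in distribution of $(g^\e,\Gamma^\e(W+\tfrac1{\sqrt\e}\int_0^{\cdot} g^\e\,ds))$ to $(g,\Gamma^0(\int_0^{\cdot} g\,ds))$; passing to a Skorokhod representation to obtain almost sure convergence, using the weak lower semicontinuity of $g\mapsto L_T(g)$ together with Fatou's lemma, and the pointwise bound $L_T(g)+h(\Gamma^0(\int_0^{\cdot} g\,ds))\ge\inf_\phi\{h(\phi)+I(\phi)\}$, I obtain $\liminf_{\e\to0}(-\e\log\EE[\cdots])\ge\inf_\phi\{h(\phi)+I(\phi)\}$. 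The delicate points are the uniform control of the near-optimal $g^\e$ (their truncation into a common $\mathcal U^N$) and the simultaneous passage to the limit that respects both the lower semicontinuity of the cost $L_T$ and the continuity supplied by Condition \ref{Aa}(b); these, rather than the representation formula itself, are where the real work lies.
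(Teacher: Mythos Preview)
The paper does not prove this theorem at all: it is quoted verbatim from \cite[Theorem 6]{BDM2008} and used as a black box, so there is no ``paper's own proof'' to compare against. Your sketch is essentially the argument given in \cite{BDM2008,DE1997}: derive the Laplace principle from the Bou\'e--Dupuis variational formula, handle the upper bound with a fixed deterministic near-optimizer and Condition~\ref{Aa}(b), and handle the lower bound by extracting weakly convergent near-optimal controls in some $\mathcal U^N$, invoking Condition~\ref{Aa}(b) for convergence of the state and weak lower semicontinuity of $L_T$ for the cost. That outline is correct; the only point worth tightening is the truncation step (reducing from $g^\e\in\mathcal L_2$ with $\sup_\e\EE[L_T(g^\e)]<\infty$ to $g^\e\in\mathcal U^N$), where one should be explicit that replacing $g^\e$ by its stopped version at the first time the running cost hits $N$ changes the variational expression by at most $2\|h\|_\infty\,\mathbb P(L_T(g^\e)>N)$, which vanishes as $N\to\infty$ uniformly in $\e$.
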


Recently, a new sufficient condition (Condition \ref{cond1} below) for verifying the assumptions in Condition \ref{Aa} for LDPs was proposed by Matoussi et al. \cite{MSZ}. It turns out that this new sufficient condition seems to be more suitable for establishing the LDP for SPDEs.
  \begin{condition}\label{cond1} There exists a measurable mapping $\Gamma^0:\mathbb V\rightarrow\mathbb U$ such that the following two items hold:
\begin{itemize}
\item[(a).] for every $N<+\infty$ and any family $\{g^n\}_{n\ge1}\subset S^N$ that converges to some element $g$ in $S^N$ as $n\rightarrow\infty$, $\Gamma^0\left(\int_0^{\cdot} g^n(s)ds\right)$ converges to  $\Gamma^0\left(\int_0^{\cdot}   g(s)ds\right)$ in the space $\mathbb U$;
   \item[(b).] for every $N<+\infty$, $\{g^\e\}_{\e>0}\subset \mathcal U^N$  and $\delta>0$,
    $$\lim_{\e\rightarrow 0}\mathbb P\big(\rho(Y^{\e}, Z^{\e})>\delta\big)=0, $$
     where  $Y^{\e}=\Gamma^\e\left(W+\frac{1}{\sqrt\e} \int_0^{\cdot}  g^{\e}(s)ds\right), Z^{\e}=\Gamma^0\left(\int_0^{\cdot}  g^{\e}(s)ds\right)$ and $\rho(\cdot, \cdot)$ stands for the metric in the space $\mathbb U$.
 \end{itemize}
  \end{condition}
\subsection{Main result}
To state our main result and give its proof, we need to introduce a map $\Gamma^0$ appeared in Condition \ref{Aa} (or Condition \ref{cond1}). Given $g\in\mathbb{S}$,  consider the following deterministic integral equation (the skeleton equation):
\begin{equation}\label{eq skeleton}
u^g(t,x)=1+\int_0^t\langle p_{t-s}(x-\cdot)\sigma(s, \cdot, u^g(s, \cdot)), g(s,\cdot)\rangle_{\mathcal H}ds, \ \ \ t\ge0,\,\, x\in \mathbb R.
 \end{equation}
 By Proposition \ref{thm solu skeleton} below, Eq.\,\eqref{eq skeleton} admits a unique solution $u^g\in \mathcal{C}([0,T]\times\mathbb{R})$.
 For any $g\in\mathbb{S}$, we define
 \begin{align}\label{eq Gamma0}
 \Gamma^0\left(\int_0^{\cdot}  g(s)ds\right):=u^g(\cdot).
 \end{align}

The following is the main result of this paper.
\begin{theorem}\label{thm LDP}  Assume  that    the hypothesis  $(\mathbf{H})$ holds. Then, the family $\{u^{\e}\}_{\e>0}$ in Eq.\,\eqref{SHE} satisfies an LDP in the space  $\cC([0, T]\times\RR)$ with the rate function $I$ given by
\begin{equation}\label{eq rate}
I(\phi):=\inf_{\left\{g\in \mathbb S;\, \phi=\Gamma^0\left(\int_0^{\cdot}  g(s)ds\right)\right\} }\left\{ \frac12 \int_0^{T}\|g(s)\|_{\mathcal H}^2ds \right\}.
\end{equation}
\end{theorem}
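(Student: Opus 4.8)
The plan is to deduce the LDP from the weak convergence criterion recalled in Subsection~\ref{A Criteria for Large Deviations}. First I would record that, for each $\e>0$, Theorem~\ref{ModUniq} furnishes a measurable solution map $\Gamma^\e:\mathbb V\to\mathbb U$ with $u^\e=\Gamma^\e(W)$, where $\mathbb U=\cC([0,T]\times\RR)$ is endowed with the metric~\eqref{e.4.metric}, and that $\Gamma^0$ defined through~\eqref{eq Gamma0} is well defined thanks to the well-posedness of the skeleton equation~\eqref{eq skeleton} (Proposition~\ref{thm solu skeleton}). Since Condition~\ref{cond1} is a sufficient condition for Condition~\ref{Aa}, once both items of Condition~\ref{cond1} are verified, Theorem~\ref{thm MSZ} yields an LDP for $\{u^\e\}_{\e>0}$ with the rate function~\eqref{rate function}, which coincides with~\eqref{eq rate} because $L_T(g)=\frac12\int_0^T\|g(s)\|_{\HH}^2ds$. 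Thus the whole proof reduces to verifying items (a) and (b) of Condition~\ref{cond1}.

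For item (a) I would establish the continuity of the skeleton map $g\mapsto u^g$ from $S^N$ (weak topology) into $\cC([0,T]\times\RR)$. Given $g^n\to g$ weakly in $L^2([0,T];\HH)$ with $g^n\in S^N$, I would first invoke the a priori bound $\sup_n\|u^{g^n}\|_{\cZ_{\lambda,T}^p}<\infty$, uniform over $S^N$. Subtracting the two copies of~\eqref{eq skeleton} and splitting the difference into a term carrying $\sigma(s,\cdot,u^{g^n})-\sigma(s,\cdot,u^g)$ tested against $g^n$ and a term carrying $\sigma(s,\cdot,u^g)$ tested against $g^n-g$, the first is controlled by the Lipschitz bound~\eqref{Lipschitzianuniformlycondu} and a Gronwall argument in the weighted norm, while the second vanishes because $g^n-g\rightharpoonup0$ and $s\mapsto p_{t-s}(x-\cdot)\sigma(s,\cdot,u^g(s,\cdot))$ is a fixed element of $L^2([0,T];\HH)$, so the $\HH$-inner products converge. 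Finally I would upgrade $\cZ_{\lambda,T}^p$-convergence to convergence in $d_\cC$ via the weighted moment control together with an equicontinuity (Kolmogorov-type) estimate.

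For item (b) lies the bulk of the work. By the definition of $\Gamma^\e$ and the Girsanov/Cameron--Martin shift, $Y^\e=\Gamma^\e\big(W+\tfrac{1}{\sqrt\e}\int_0^\cdot g^\e(s)ds\big)$ solves the controlled mild equation
\[
Y^\e(t,x)=1+\sqrt\e\int_0^t\!\!\int_\RR p_{t-s}(x-y)\sigma(s,y,Y^\e(s,y))W(ds,dy)+\int_0^t\langle p_{t-s}(x-\cdot)\sigma(s,\cdot,Y^\e(s,\cdot)),g^\e(s,\cdot)\rangle_\HH\,ds,
\]
while $Z^\e=u^{g^\e}$ solves~\eqref{eq skeleton} with $g=g^\e$. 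The strategy is to prove $\ee\|Y^\e-Z^\e\|^p_{\cZ_{\lambda,T}^p}\to0$, from which convergence in $d_\cC$, and hence $\lim_{\e\to0}\mathbb P(d_\cC(Y^\e,Z^\e)>\delta)=0$, follows by Chebyshev's inequality. Subtracting the two equations, the difference splits into the $O(\sqrt\e)$ stochastic integral, estimated by the Burkholder--Davis--Gundy inequality~\eqref{e.bdg} in the $\cN_{\frac12-H,p}$ norm, and the drift $\int_0^t\langle p_{t-s}(x-\cdot)[\sigma(s,\cdot,Y^\e(s,\cdot))-\sigma(s,\cdot,Z^\e(s,\cdot))],g^\e(s,\cdot)\rangle_\HH\,ds$, bounded via Cauchy--Schwarz in $\HH$, the constraint $g^\e\in\mathcal U^N$, and the Lipschitz condition~\eqref{Lipschitzianuniformlycondu}, which leads to a Gronwall inequality for $t\mapsto\|Y^\e-Z^\e\|^p_{\cZ_{\lambda,t}^p}$. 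A prerequisite is the uniform moment bound $\sup_{\e\in(0,1)}\ee\|Y^\e\|^p_{\cZ_{\lambda,T}^p}<\infty$.

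The hard part will be the uniform-in-$\e$ a priori estimates in the weighted space $\cZ_{\lambda,T}^p$ in the rough regime $H\in(\frac14,\frac12)$. The $\HH$-norm is a fractional (Gagliardo-type) seminorm carrying the weight $|y|^{2H-2}$, and the seminorm $\cN^*_{\frac12-H,p}$ entangles spatial increments with the decay weight $\lambda$, so controlling the stochastic convolution and the control drift \emph{simultaneously} in that norm---in particular bounding $\langle p_{t-s}(x-\cdot)\sigma,g^\e\rangle_\HH$ uniformly over $g^\e\in S^N$ while tracking both the weight and the increment structure---is the technical heart, and is precisely the difficulty created by the weight as emphasized in~\cite{HW2019}. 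I expect the control-drift estimate, and the verification that the $O(\sqrt\e)$ stochastic term is negligible in the weighted norm, to be the most delicate steps.
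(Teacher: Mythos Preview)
Your overall reduction to Condition~\ref{cond1} and the sketch for item~(a) match the paper's approach. The paper likewise proves (a) by first obtaining relative compactness of $\{u^{g_n}\}$ in $\cC([0,T]\times\RR)$ via uniform H\"older estimates and Arzel\`a--Ascoli, and then identifying the limit through a weighted Gr\"onwall argument on the pair $\big(\|u^{g_n}-u^g\|_{L^2_\lambda}^2,\,[\cN^*_{\frac12-H,2}(u^{g_n}-u^g)]^2\big)$, with the weak convergence of $g_n-g$ killing the cross term.

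There is, however, a genuine gap in your plan for item~(b). Bounding the drift difference ``via Cauchy--Schwarz in $\HH$, the constraint $g^\e\in\mathcal U^N$, and the Lipschitz condition~\eqref{Lipschitzianuniformlycondu}'' does not close a Gr\"onwall inequality in $\cZ_{\lambda,T}^p$. The $\HH$-norm of $p_{t-s}(x-\cdot)\big[\sigma(s,\cdot,Y^\e)-\sigma(s,\cdot,Z^\e)\big]$ is a fractional seminorm in the spatial variable, and its control requires the full hypothesis $(\mathbf{H})$, in particular~\eqref{DuSigamAdd}. When you expand the spatial increment of $\sigma(s,y,Y^\e(s,y))-\sigma(s,y,Z^\e(s,y))$ (cf.~\eqref{490-490}), a term of the form
\[
\lambda^{2/p_0}(y)\,|Y^\e(s,y)-Z^\e(s,y)|^2\cdot\Big(|Y^\e(s,y+l)-Y^\e(s,y)|^2+|Z^\e(s,y+l)-Z^\e(s,y)|^2\Big)
\]
appears, i.e.\ the Gr\"onwall inequality acquires the \emph{random} coefficient $\lambda^{2/p}(y)\big[\cN_{\frac12-H}Y^\e(s,y)\big]^2$. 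This quantity is only bounded in $L^p(\Omega)$ (Lemma~\ref{lemma-5.2} and \cite[Lemma~4.7]{HW2019}), not pathwise, so the loop does not close at the level of $\cZ_{\lambda,T}^p$-moments. The paper handles this by introducing the stopping time $\tau_k$ in~\eqref{Stoptime1}, working with the stopped processes $\tilde u^\e_k,\bar u^\e_k$ for which the coefficient is bounded by $k^2$, proving $\|\tilde u^\e_k-\bar u^\e_k\|_{\cZ_{\lambda,T}^p}\to0$ for each fixed $k$ (Lemma~\ref{utildebar}), and then removing the localization via $\tau_k\uparrow\infty$.

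A second, smaller issue: convergence in $\cZ_{\lambda,T}^p$ does not yield convergence in $d_\cC$ by Chebyshev, because $\cZ_{\lambda,T}^p$ controls weighted $L^p$ norms in $x$, not pointwise values. The paper instead proves tightness of $\{\tilde u^\e-\bar u^\e\}$ in $\cC([0,T]\times\RR)$ (via the H\"older-type estimates of \cite[Lemma~4.7]{HW2019} and Lemma~\ref{lemma-5.2}) and then identifies the weak limit as $0$ using the $\cZ_{\lambda,T}^p$-convergence of the stopped processes together with~\eqref{stop-infinit}; see the argument around~\eqref{eq P}. Your plan should incorporate both the stopping-time localization and the tightness-plus-identification step.
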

 \begin{proof}
According to Theorem \ref{thm MSZ} and \cite[Theorem 3.2]{MSZ}, it suffices to show that the conditions (a) and (b) in Condition \ref{cond1} are satisfied. Condition (a) will be proved in  Proposition \ref{thm continuity skeleton}, and Condition (b) will be verified
   in Proposition \ref{proposition-4-3}.
The proof is complete.
\end{proof}

\section{Skeleton equation}

In this section, we study the well-posedness of  the  skeleton equation \eqref{eq skeleton}.

 For $p\geq 2$, $H\in(\frac 14,\frac 12)$, recall the space $\mathcal Z_{\lambda, T}^p$  with the norm \eqref{ZNorm}.
The space of all non-random functions in $\mathcal Z_{\lambda, T}^p$ is denoted by $Z_{\lambda, T}^p$,  with the following norm:
\begin{equation}\label{YNorm}
   \|v\|_{Z_{\lambda,T}^p} :=\sup_{t\in[0,T]} \|v(t,\cdot)\|_{L^p_{\lambda}(\RR)}+\sup_{t\in[0,T]}\cN_{\frac 12-H,\, p}^*v(t),
 \end{equation}
 where
\begin{align}\label{LPlambdaRNorm}
  \|v(t,\cdot)\|_{L^p_{\lambda}(\RR)}:=\lc\int_{\RR} |v(t,x)|^p \lambda(x)dx\rc^{\frac 1p},
\end{align}
and
 \begin{equation}\label{N*pNorm}
   \cN^*_{\frac 12-H,\,p}v(t):=\lc\int_{\RR}\|v(t,\cdot)-v(t,\cdot+h)\|^2_{L^p_{\lambda}(\RR)}\cdot |h|^{2H-2}dh\rc^{\frac 12}.
 \end{equation}

\begin{proposition}\label{thm solu skeleton}
 Assume that  $\sigma$ satisfies the hypothesis $(\mathbf{H})$. Then, Eq.\,\eqref{eq skeleton} admits a unique solution $u^g$ in $\mathcal{C}([0, T]\times\mathbb R)$.
 In addition, $\displaystyle\sup_{g\in S^N}\|u^g\|_{Z^p_{\lambda,T}}<\infty$ for any $N\geq 1$ and any  $p\geq 2$.
  \end{proposition}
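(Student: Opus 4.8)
The plan is to prove existence, uniqueness, and the uniform bound by a Picard-type fixed-point argument carried out directly in the weighted Banach space $Z^p_{\lambda,T}$, exploiting the structure of the scalar product $\langle\cdot,\cdot\rangle_{\mathcal H}$ and the regularity hypotheses in $(\mathbf{H})$. First I would set up the solution map $\mathcal T$ sending a function $v\in Z^p_{\lambda,T}$ to
\begin{equation}\label{eq:Tmap}
(\mathcal T v)(t,x):=1+\int_0^t\langle p_{t-s}(x-\cdot)\sigma(s,\cdot,v(s,\cdot)),g(s,\cdot)\rangle_{\mathcal H}\,ds,
\end{equation}
and show that $\mathcal T$ maps $Z^p_{\lambda,T}$ into itself and is a contraction on a short time interval (which is then extended to $[0,T]$ by iterating, the bounds being uniform in the starting time). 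The crucial analytic device is to bound the $\mathcal H$-inner product by Cauchy--Schwarz, $|\langle F,g(s,\cdot)\rangle_{\mathcal H}|\le \|F\|_{\mathcal H}\,\|g(s,\cdot)\|_{\mathcal H}$, and then integrate in $s$ using $\int_0^T\|g(s)\|_{\mathcal H}^2\,ds\le 2N$ for $g\in S^N$; this is exactly where the uniformity over $g\in S^N$ enters and is what yields the final claim $\sup_{g\in S^N}\|u^g\|_{Z^p_{\lambda,T}}<\infty$.

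The heart of the estimate is controlling the $\mathcal H$-norm $\|p_{t-s}(x-\cdot)\sigma(s,\cdot,v(s,\cdot))\|_{\mathcal H}$ in terms of the two ingredients that make up the $Z^p_{\lambda,T}$-norm, namely the weighted $L^p$ size of $v$ and its increment seminorm $\cN^*_{\frac12-H,p}$. Using the increment representation of the $\mathcal H$-product (the second line of \eqref{eq H product}), the product rule for increments of $p_{t-s}(x-\cdot)\sigma$ splits into a term where the heat kernel is differenced and a term where $\sigma$ is differenced; the first is handled by known bounds on the increments of the heat kernel against the $|y|^{2H-2}$ weight, and the second by the Lipschitz and growth bounds \eqref{lineargrowthuniformlycond}--\eqref{DuxSigam}, together with the increment bound on $\sigma$ in the spatial variable coming from $\sigma'_x$. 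This is essentially the deterministic (second-moment-free) analogue of the Burkholder--Davis--Gundy estimate in Proposition \ref{HBDG}, with $\cN_{\frac12-H,p}$ playing the role of the stochastic $\cN_{\frac12-H,p}$; I would expect the same kind of spatial-regularity gain from the heat semigroup that powers the stochastic theory in \cite{HHLNT2017,HW2019} to close the loop here. The weight $\lambda(x)=c_H(1+|x|^2)^{H-1}$ must be threaded through carefully: one needs $\int p_{t-s}(x-y)^2\lambda(x)\,dx \lesssim \lambda(y)(t-s)^{-1/2}$-type comparisons so that the convolution with the heat kernel respects the weight, and this is precisely the weighted-space machinery that \cite{HW2019} developed to dispense with the condition $\sigma(0)=0$.

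After the contraction gives a unique fixed point $u^g\in Z^p_{\lambda,T}$ with the desired uniform bound, I would upgrade from membership in $Z^p_{\lambda,T}$ to continuity, i.e. $u^g\in\mathcal C([0,T]\times\mathbb R)$, by establishing Hölder-type continuity estimates in $t$ and $x$ for the increments of $\mathcal Tu^g$ (again via the $\mathcal H$-norm estimates above applied to time- and space-differences of the heat kernel), and then invoking a Kolmogorov-type / Garsia--Rodemich--Rumsey-type argument, now in a purely deterministic setting since $g$ is fixed. The main obstacle I anticipate is the estimate of $\|p_{t-s}(x-\cdot)\sigma\|_{\mathcal H}$ in the weighted norm: the covariance kernel $|y|^{2H-2}$ is not integrable near the origin for $H<\frac12$, so one cannot simply bound increments crudely, and the interplay between the decay weight $\lambda$ and the rough kernel $|y|^{2H-2}$ in the $\cN^*_{\frac12-H,p}$ term is delicate. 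Controlling this term uniformly in $s$ near the singularity $t=s$, while keeping the time integral $\int_0^t(t-s)^{-\gamma}\|g(s)\|_{\mathcal H}\,ds$ finite via Cauchy--Schwarz and the constraint $L_T(g)\le N$, together with the restriction $p_0>\frac{6}{4H-1}$ that guarantees enough integrability for \eqref{DuSigamAdd}, is where the real work lies. I would structure the time integral so that a factor $(t-s)^{-\theta}$ with $\theta<1$ appears and is absorbed by the short-time smallness, ensuring the contraction constant can be made less than one.
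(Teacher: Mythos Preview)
Your plan departs from the paper's route and has a real obstruction in the contraction step that the paper avoids by an approximation-plus-compactness argument. The paper explicitly notes that direct Picard iteration in $\mathcal H$ is difficult and instead introduces regularized Hilbert spaces $\mathcal H_\varepsilon$ (with bounded kernel $f_\varepsilon$), solves the $\mathcal H_\varepsilon$-skeleton equation by Picard iteration (convergence there uses $\|f_\varepsilon\|_\infty<\infty$, so the constants are $\varepsilon$-dependent), proves $\varepsilon$-uniform $Z^p_{\lambda,T}$ bounds and $\varepsilon$-uniform H\"older estimates, extracts a subsequential limit by Arzel\`a--Ascoli, and finally identifies the limit as a solution. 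Uniqueness is proved separately by a Gr\"onwall argument in the $L^2_\lambda$-difference plus increment seminorm.

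The specific gap is in the Lipschitz/contraction estimate for the $\cN^*_{\frac12-H,p}$-part of the $Z^p_{\lambda,T}$-norm. When you difference
$\sigma(s,y,v_1(s,y+h))-\sigma(s,y,v_2(s,y+h))$ against $\sigma(s,y,v_1(s,y))-\sigma(s,y,v_2(s,y))$, the mean-value representation together with \eqref{DuSigamAdd} forces a term of the form
\[
\lambda^{2/p_0}(y)\,|v_1(s,y)-v_2(s,y)|^2\cdot|v_1(s,y+h)-v_1(s,y)|^2
\]
(this is exactly \eqref{490-490} in the paper's uniqueness proof). To absorb the $|h|^{2H-2}$ weight you must control $\lambda^{2/p_0}(y)\int_{\RR}|v_1(s,y+h)-v_1(s,y)|^2|h|^{2H-2}\,dh=\lambda^{2/p_0}(y)[\cN_{\frac12-H}v_1(s,y)]^2$ \emph{pointwise} in $(s,y)$. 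This quantity is not controlled by $\|v_1\|_{Z^p_{\lambda,T}}$: the seminorm $\cN^*_{\frac12-H,p}$ is an integrated object, not a pointwise one. The paper only obtains this pointwise bound a posteriori for the solution (see \eqref{Nug}--\eqref{Nvg}, which rely on the factorization argument of Lemma~\ref{TimeSpaceRegBdd}(i) and require $p>\tfrac{6}{4H-1}$), and then feeds it into the uniqueness proof. To make your contraction close you would have to enlarge the fixed-point space to include a uniform pointwise $\lambda^{1/p}(x)\cN_{\frac12-H}$ bound and propagate it through every Picard iterate---a substantial extra layer that your sketch does not address, and precisely the complication the paper sidesteps by regularizing and using compactness rather than contraction for existence.
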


 Due to the complexity of the space $\mathcal H$, it is difficult to prove Proposition \ref{thm solu skeleton} by using Picard's iteration directly. We use the approximation method by introducing a new Hilbert space $\mathcal H_{\e}$ as follows.

 For every fixed $\varepsilon>0$, let
\begin{equation}\label{eq f e}
f_{\e}(x):=\frac{1}{2\pi} \int_{\mathbb R} e^{i\xi x}e^{-\e |\xi|^2}|\xi|^{1-2H}  d\xi.
	\end{equation}
For any $\phi, \psi \in \mathcal{D}(\RR)$, we define
	\begin{equation}\label{eq product H e}
    \begin{split}
     \langle\phi,\psi\rangle_{\mathcal H_{\e}}
     :=&\,c_{1,H}\int_{\RR} \cF \phi(\xi) \overline{\cF \psi(\xi)} e^{-\e|\xi|^2} |\xi|^{1-2H}d\xi\\
     =&\,c_{1, H}\int_{\mathbb{R}^2}\phi(x)\psi(y)f_{\e}(x-y)dxdy,
   \end{split}
   \end{equation}
   where  $c_{1,H}$ is given by  \eqref{e.c1}.    Let $\mathcal H_{\e}$  be
	  the Hilbert space obtained by completing $\mathcal D(\RR) $ with respect to the scalar  product given by  \eqref{eq product H e}.
	For any $0\le \e_1<\e_2$, we have  that for any $\phi\in \mathcal H_{\e_1},$
	 \begin{align}\label{eq H compare}
  \|\phi\|_{\mathcal H_{\e_1}}\ge \|\phi\|_{\mathcal H_{\e_2}},
	 \end{align}
and  we have by the dominated convergence theorem,
$$
\lim_{\e\rightarrow0}\, \langle \phi, \psi \rangle_{\HH_{\e}}=\langle \phi, \psi \rangle_{\HH} \ \ \text{ for any } \phi, \psi\in \mathcal H.
$$

For any $g\in \mathbb{S}$, let
\begin{align}\label{eq u h e}
		u_{\ep}^{g}(t,x)=1 + \int_0^t \langle  p_{t-s}(x-\cdot)\sigma(s,\cdot, u^g_{\ep}(s,\cdot)), g(s,\cdot) \rangle_{\HH_\e}ds.
\end{align}

 Since $|\xi|^{1-2H}e^{-\varepsilon |\xi|^2}$ is in $L^1(\mathbb R)$, $|f_{\varepsilon}|$ is bounded.  Due to the regularity in space, the existence and uniqueness of the solution $u^{g}_{\ep}$ to Eq.\,\eqref{eq u h e} is well-known, see, e.g., \cite[Section 4]{MCS}.

For any $t \ge0, x, y, h\in \mathbb R$, let
\begin{align}\label{TechDt}
D_{t}(x,h):=p_{t}(x+h)-p_{t}(x),   
\end{align}
and
\begin{align}\label{TechBoxt}
\Box_{t}(x,y,h):=p_{t}(x+y+h)-p_{t}(x+y)-p_{t}(x+h)+p_{t}(x).
\end{align}

The following lemma asserts that the approximate solution $u_\ep^g$  is   {bounded in the space $Z_{\lambda,T}^p$ uniformly over $\e>0$.}
 \begin{lemma}\label{UniBExist}
Let $H\in(\frac 14,\frac 12)$ and $g\in \mathbb{S}$.
Assume that   $\sigma$ satisfies the hypothesis $(\mathbf{H})$.
Then,  for any $p\ge2$,
   \begin{equation}\label{ReguBdd}
      { \sup_{\ep>0}\|u_\ep^g\|_{Z_{\lambda,T}^p}
     <\infty.}
   \end{equation}
 \end{lemma}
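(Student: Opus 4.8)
The plan is to derive, uniformly in $\e>0$, a closed Gronwall-type inequality for
$$\Phi_\e(t):=\|u_\e^g\|_{Z_{\lambda,t}^p},$$
treating the two constituents of this norm — the weighted norm $\sup_{r\le t}\|u_\e^g(r,\cdot)\|_{L^p_\lambda(\RR)}$ and the increment seminorm $\sup_{r\le t}\cN^*_{\frac12-H,\,p}u_\e^g(r)$ — simultaneously, since the spatial-increment estimate for $\sigma$ inevitably couples them. The source of uniformity in $\e$ is the comparison \eqref{eq H compare} with $\e_1=0$, namely $\|\phi\|_{\HH_\e}\le\|\phi\|_{\HH}$: after a single Cauchy–Schwarz step in $\HH_\e$, every $\HH_\e$-norm is replaced by the $\e$-free $\HH$-norm, so all subsequent bounds are independent of $\e$. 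Since $g\in\mathbb S$ lies in some $S^N$, we may use $\int_0^T\|g(s)\|_{\HH}^2\,ds\le 2N$ throughout. Moreover it suffices to treat $p\ge p_0$: because $\lambda$ is a probability density ($\int_\RR\lambda(x)\,dx=1$), Jensen's inequality gives $\|\cdot\|_{L^p_\lambda}\le\|\cdot\|_{L^{p_0}_\lambda}$ and hence $\|\cdot\|_{Z^p_{\lambda,T}}\le\|\cdot\|_{Z^{p_0}_{\lambda,T}}$ for $2\le p\le p_0$, so the case $p\ge p_0$ — where the higher-order hypothesis \eqref{DuSigamAdd} is available — yields the rest.

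For the weighted-norm constituent I would start from \eqref{eq u h e}, bound the inner product by Cauchy–Schwarz in $\HH_\e$ and then in the time variable, to obtain for each fixed $x$
$$|u_\e^g(t,x)|^2\lesssim 1+N\int_0^t\big\|p_{t-s}(x-\cdot)\,\sigma(s,\cdot,u_\e^g(s,\cdot))\big\|_{\HH}^2\,ds.$$
Taking the $L^{p/2}_\lambda(dx)$-norm (valid since $p\ge2$) and applying Minkowski's integral inequality pushes the spatial integration inside the $s$-integral, reducing matters to $\big\|\,\|p_{t-s}(x-\cdot)\sigma(\cdots)\|_{\HH}^2\,\big\|_{L^{p/2}_\lambda(dx)}$. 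By the real-space representation \eqref{eq H product} of $\langle\cdot,\cdot\rangle_{\HH}$, the spatial increment of the integrand factorizes as
$$D_{t-s}(x-y,-h)\,\sigma(s,y+h,u_\e^g(s,y+h))+p_{t-s}(x-y)\big[\sigma(s,y+h,u_\e^g(s,y+h))-\sigma(s,y,u_\e^g(s,y))\big],$$
with $D$ as in \eqref{TechDt}. The first summand is controlled by the linear growth \eqref{lineargrowthuniformlycond} together with the Appendix estimates for $\int_\RR|D_{t-s}(\cdot,h)|^2|h|^{2H-2}\,dh$; it contributes the weighted $L^p_\lambda$-mass of $u_\e^g$, the $x$-weight being transferred onto the $u_\e^g$-variable through the Appendix heat-kernel comparison lemmas. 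The second summand is controlled by the Lipschitz and derivative bounds \eqref{Lipschitzianuniformlycondu}–\eqref{DuxSigam} and the chain rule, which turn the increment of $\sigma$ into the spatial increment of $u_\e^g$, i.e. the $\cN^*_{\frac12-H,\,p}$-seminorm.

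For the increment-seminorm constituent I would instead estimate $u_\e^g(t,x)-u_\e^g(t,x+\eta)$ directly from \eqref{eq u h e}; here the test function carries the difference $p_{t-s}(x-\cdot)-p_{t-s}(x+\eta-\cdot)$, so the real-space $\HH$-norm of the integrand generates the second-order difference $\Box_{t-s}$ of \eqref{TechBoxt} (together with first-order differences of $\sigma$, for which \eqref{DuSigam}, \eqref{DuxSigam} and \eqref{DuSigamAdd} are used). The matching Appendix bounds for $\int_\RR|\Box_{t-s}(\cdot,\cdot,h)|^2|h|^{2H-2}\,dh$ again bound $\cN^*_{\frac12-H,\,p}u_\e^g(t)$ by the same two contributions. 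Collecting both constituents yields
$$\Phi_\e(t)^2\lesssim 1+\int_0^t\kappa(t-s)\,\Phi_\e(s)^2\,ds,$$
with a constant depending on $N,T,H,p$ and a locally integrable kernel $\kappa$ coming from the heat-kernel increment estimates, to which a singular (fractional) Gronwall lemma applies and gives a bound on $\Phi_\e(T)$ independent of $\e$. To avoid circularity I would first note that, for each fixed $\e$, boundedness of $f_\e$ makes the smoothed equation \eqref{eq u h e} regular enough that $\Phi_\e(T)<\infty$ a priori, so the Gronwall step acts on a finite quantity.

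The main obstacle is the increment-seminorm part: obtaining the sharp $\e$-uniform estimates for the singular integrals $\int_\RR|\Box_{t-s}(x,y,h)|^2|h|^{2H-2}\,dh$ (and their $D$-analogues) with a time singularity in $t-s$ that remains integrable, while simultaneously threading the weight $\lambda$ through the heat semigroup so that the two seminorm constituents close onto one another in the Gronwall step. The constraint $H>\tfrac14$ is exactly what keeps these increment integrals finite, and it is the interplay between the threshold $p_0>\tfrac{6}{4H-1}$ and \eqref{DuSigamAdd} that allows the $\sigma$-increment contributions to be absorbed into $\Phi_\e(s)^2$.
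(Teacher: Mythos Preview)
Your proposal follows essentially the same strategy as the paper: Cauchy--Schwarz in $\HH_\e$, the comparison $\|\cdot\|_{\HH_\e}\le\|\cdot\|_{\HH}$ to remove the $\e$-dependence, the real-space decomposition of $\|p_{t-s}(x-\cdot)\sigma(\cdots)\|_{\HH}^2$ into a $D_{t-s}$-piece and a $\sigma$-increment piece (and analogously $\Box_{t-s}$ for the increment seminorm), the Appendix heat-kernel lemmas with the weight $\lambda$ threaded through, and a singular Gronwall closure. The splitting you describe is exactly the paper's $\cA_1,\cA_2,\cA_3$ and $\cI_1,\cI_2,\cI_3$.

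There are two points of divergence worth noting. First, the paper runs the entire computation on the Picard iterates $u_\e^{g,n}$ rather than on $u_\e^g$ itself, obtains a bound uniform in $n$ via the iterated Gronwall lemma (Dalang's Lemma~15), and then passes to the limit using Fatou. This sidesteps the circularity you flag at the end: your appeal to ``boundedness of $f_\e$ makes the smoothed equation regular enough that $\Phi_\e(T)<\infty$ a priori'' is plausible but not immediate---in particular the finiteness of $\cN^*_{\frac12-H,\,p}u_\e^g(t)$ for fixed $\e$ is not obviously supplied by \cite{MCS}, and you would have to argue it separately before invoking Gronwall on a finite quantity. The Picard route avoids this entirely.

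Second, the hypothesis \eqref{DuSigamAdd} and the threshold $p_0$ play no role in this lemma. The $\sigma$-increment in the $x$-variable is handled purely via \eqref{DuySigam} and \eqref{DuxSigam} (see the paper's \eqref{DuSigamy}--\eqref{DuxSigamy}), and the $u$-increment via \eqref{Lipschitzianuniformlycondu}; the bound holds directly for every $p\ge2$ without passing through large $p$ first. Your reduction to $p\ge p_0$ via Jensen is harmless but unnecessary, and the claim that \eqref{DuSigamAdd} is what makes the increment contributions close is mistaken---that condition enters only later, in the uniqueness argument for the skeleton equation.
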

	  \begin{proof}
	  We use the similar argument   as  in \cite{HW2019} replacing the stochastic integral by the  deterministic integral.
 We first define  Picard's iteration sequence. For any $t\ge0, x\in \mathbb R$,  let
$$
    u_\ep^{g, 0}(t,x)=1,
$$
   and recursively  for $n=0, 1, 2, \cdots$,
  \begin{equation}\label{4-38}
    u_\ep^{g, n+1}(t,x)=1+\int_{0}^{t}\langle p_{t-s}(x-\cdot)\sigma(s,\cdot,u_\ep^{g, n}(s,\cdot)), g(s,\cdot) \rangle_{\HH_\e}ds.
  \end{equation}
   Since $g\in  \mathbb{S}$,   there exists some constant $N>0$ such that $\int_0^T \|g(s,\cdot)\|_{\HH}^2ds\le N$. By \eqref{eq H compare}, we know that
  \begin{align}\label{g-interal-bound}
  \int_0^T \|g(s,\cdot)\|_{\HH_{\e}}^2ds\le N \,\,\,\emph{for}\,\,\emph{any}\,\,\e>0.
  \end{align}
   {In Step 1, we prove  the convergence of $u_\ep^{g, n}(t,\cdot)$ in $L^p_{\lambda}(\mathbb R)$ for any $t\in[0,T]$. In Steps 2 and 3,  we   give  some quantitative   estimates  for  $\|u_\ep^{g,n}(t)\|_{L^p_{\lambda}(\RR)}$  and $\mathcal N^*_{\frac12-H,\, p} u_{\ep}^{g,n}(t)$ for each fixed $\ep>0$. Step 4 is devoted to  proving that  $u_\ep^g$ is   bounded  in  $\left(Z_{\lambda,T}^p,\, \|\cdot\|_{Z_{\lambda,T}^p}\right)$   {uniformly over    }$\ep>0$.}

{\bf Step 1.} In this step,  we will  prove that for any fixed $\e>0$,  as    $n$ goes to  infinity, the sequence $u_\ep^{g, n}(t,\cdot)$ converges to $u_\ep^{g}(t,\cdot)$   {in $L_{\lambda}^{p}(\mathbb R)$.}

By the  Cauchy-Schwarz  inequality, \eqref{Lipschitzianuniformlycondu}, the boundedness of $f_ {\e}$ and Jensen's inequality respect to $p_{t-s}(x-y)dy$, we have that,  for any $t\in [0,T]$ and $x\in \RR$,
     \begin{align*}
      & |u_\ep^{g, n+1}(t,x)-u_\ep^{g,n}(t,x)|^2 \\
     =&\, \left|\int_{0}^{t}\left\langle p_{t-s}(x-\cdot)\big[\sigma(s,\cdot,u_\ep^{g, n}(s,\cdot))-\sigma(s,\cdot,u_\ep^{g, n-1}(s,\cdot))\big], g(s,\cdot)\right\rangle_{\HH_\e}ds\right|^2  \\
     \le  &\,   \int_0^t \|g(s,\cdot)\|_{\HH_\e}^2ds\cdot \int_{0}^{t}\left\|p_{t-s}(x-\cdot)\left[\sigma(s,\cdot,u_\ep^{g, n}(s,\cdot))-\sigma(s,\cdot,u_\ep^{g, n-1}(s,\cdot))\right]\right\|_{\HH_\e}^2ds\\
     \leq&\,  c_{1,H} N    \int_0^t \int_{\RR}\int_{\RR}  p_{t-s}(x-y)\left[\sigma(s,y,u_\ep^{g, n}(s,y)-\sigma(s,y,u_\ep^{g, n-1}(s,y))\right] \\
     &\qquad\qquad\qquad\quad \cdot p_{t-s}(x-z)\left[\sigma(s,z,u_\ep^{g, n}(s,z))-\sigma(s,z,u_\ep^{g, n-1}(s,z))\right]f_{\e}(y-z)dydzds\\
     \le&\, c_{1,H}  N\|f_{\e}\|_{\infty} \cdot\int_0^t\int_{\RR } p_{t-s}(x-y)\left[\sigma(s,y,u_\ep^{g, n}(s,y)-\sigma(s,y,u_\ep^{g, n-1}(s,y))\right]^2dyds.
   \end{align*}
  Integrating  with respect to the spatial variable with the weight $\lambda(x)$ and invoking  \eqref{Lipschitzianuniformlycondu}, Jensen's inequality respect to $p_{t-s}(x-y)dyds$ and an application of  Lemma \ref{TechLemma1} yield that for any $p\ge2$,  $t\in [0,T]$,
  \begin{align}\label{u-g-n-l-estimate}
    & \|u_\ep^{g, n+1}(t,\cdot)-u_\ep^{g, n}(t,\cdot)\|_{L^p_{\lambda}(\RR)}^p\notag\\
     =&\, \int_{\RR}  \left| u_\ep^{g, n+1}(t,x)-u_\ep^{g, n}(t,x)\right|^p  \lambda(x)dx\notag \\
     \leq&\,C_{\e,N}\int_{\RR}\left|\int_{0}^{t}\int_{\RR}p_{t-s}(x-y)\left|u_\ep^{g, n}(s,y)-u_\ep^{g, n-1}(s,y)\right|^2dyds\right|^{\frac{p}{2}}\lambda(x)dx\notag\\
     \leq&\,C_{\ep,N,p,T}\int_{\RR}\left[\int_{0}^{t}\int_{\RR}p_{t-s}(x-y)\left|u_\ep^{g, n}(s,y)-u_\ep^{g, n-1}(s,y)\right|^pdyds\right]\lambda(x)dx\\
       {=}&\,C_{\ep,N,p,T}\int_{0}^{t}\int_{\RR}\frac{1}{\lambda(y)}\int_{\RR}p_{t-s}(x-y)\lambda(x)dx\left|u_\ep^{g, n}(s,y)-u_\ep^{g, n-1}(s,y)\right|^p\lambda(y)dyds\notag\\
     \le &\,C_{\ep,N,p,T} \int_{0}^{t}\|u_\ep^{g, n}(s,\cdot)-u_\ep^{g, n-1}(s,\cdot)\|_{L^p_{\lambda}( \RR)}^p ds\notag\\
     \le &\,C_{\ep,N,p,T}^n  \frac{T^n}{n!} \sup_{s\in[0,T]}\|u_\ep^{g, 1}(s,\cdot)-u_\ep^{g, 0}(s,\cdot)\|_{L^p_{\lambda}( \RR)}^p.\notag
   \end{align}
 Then, (\ref{u-g-n-l-estimate}) implies that
 $$
   \sup_{n\geq1} \sup\limits_{t\in[0,T]}\|u_{\ep}^{g, n}(t,\cdot)\|_ {L^p_{\lambda}(\RR)}<\infty, \quad \hbox{ for each $\ep>0$},
  $$
  and that   {$\{u_{\ep}^{g, n}(t,\cdot)\}_{n\geq1}$  is a Cauchy sequence in $L^p_{\lambda}(\RR)$ for any $t\in[0,T]$. Hence, for any fixed $t\in [0,T]$,  there exists  $u_{\ep}^{g}(t,\cdot)\in L^p_{\lambda}( \RR)$  such that  
  \begin{align}\label{eq u gn}
  u_{\ep}^{g, n}(t,\cdot)\rightarrow u_{\ep}^{g}(t,\cdot)  \text{ in }   L^p_{\lambda}(\RR),   \text{ as  } n\rightarrow\infty
  \end{align}
  }

{\bf Step 2.} In this step, we will give   { a quantitative estimate  for  $\|u_\ep^{g,n}(t,\cdot )\|_{L^p_{\lambda}(\RR)}$ for any fixed $\e>0$}.
 By the Cauchy-Schwarz inequality, \eqref{eq H product} and \eqref{eq product H e}, we have
   \begin{align}\label{uepLp}
      |u_\ep^{g,n+1}(t,x)|^p
     \lesssim\,& 1
       +\lc\int_{0}^{t}\left\| p_{t-s}(x-\cdot)\sigma(s,\cdot,u_\ep^{g,n}(s,\cdot))\right\|_{\HH_\e}^2ds\rc^{\frac p2}  \nonumber\\
        \lesssim\,&1
       +\lc\int_{0}^{t}\left\| p_{t-s}(x-\cdot)\sigma(s,\cdot,u_\ep^{g,n}(s,\cdot))\right\|_{\HH}^2ds\rc^{\frac p2}  \nonumber\\
     \simeq\,& 1
       + \bigg(\int_{0}^{t}\int_{\RR^2}  \Big|p_{t-s}(x-y-h)\sigma(s,y+h,u_\ep^{g,n}(s,y+h))\\
       &\qquad\qquad\qquad-p_{t-s}(x-y)\sigma(s,y,u_\ep^{g,n}(s,y))\Big|^2 \cdot|h|^{2H-2}dhdyds\bigg)^{\frac p2} \nonumber\\
       \lesssim\,& 1+\cA_1(t,x)+\cA_2(t,x)+\cA_3(t,x), \nonumber
   \end{align}
where
   \begin{align*}
     \cA_1(t,x) :=\,&\bigg(\int_{0}^{t}\int_{\RR^2} p^2_{t-s}(x-y-h) \cdot  \big|\sigma(s,y+h,u_\ep^{g,n}(s,y+h))
    -\sigma(s,y,u_\ep^{g,n}(s,y+h))\big|^2\\ &\qquad\quad\cdot|h|^{2H-2}dhdyds\bigg)^{\frac p2} ;\\
     \cA_2(t,x) :=\,&\bigg(\int_{0}^{t}\int_{\RR^2} p^2_{t-s}(x-y-h) \cdot  \big|\sigma(s,y,u_\ep^{g,n}(s,y+h))-\sigma(s,y,u_\ep^{g,n}(s,y))\big|^2\\
     &\qquad\quad\cdot|h|^{2H-2}dhdyds\bigg)^{\frac p2};\\
     \cA_3(t,x) :=\,&\bigg( \int_{0}^{t}\int_{\RR^2}|D_{t-s}(x-y,h)|^2\cdot\left|\sigma(s,y,u_\ep^{g,n}(s,y))\right|^2
             \cdot|h|^{2H-2} dhdyds\bigg)^{\frac p2}.
   \end{align*}
   If $|h|>1$, then    we have by   \eqref{lineargrowthuniformlycond}
   \begin{equation}\label{DuSigamy}
  \begin{split}
   &\left|\sigma(s,y+h,u_\ep^{g,n}(s,y))-\sigma(s,y,u_\ep^{g,n}(s,y)) \right|^2\\
   \lesssim\,&\left|\sigma(s,y+h,u_\ep^{g,n}(s,y)) \right|^2+\left|\sigma(s,y,u_\ep^{g,n}(s,y)) \right|^2\\
    \lesssim\,&1+\left|u_\ep^{g,n}(s,y) \right|^2.
      \end{split}
     \end{equation}
If $|h|\leq1$, then  by   \eqref{DuySigam} and \eqref{DuxSigam}  there exists some $\zeta\in(0,1)$ such that
   \begin{equation}\label{DuxSigamy}
  \begin{split}
   &\left|\sigma(s,y+h,u_\ep^{g,n}(s,y))-\sigma(s,y,u_\ep^{g,n}(s,y)) \right|^2\\
   \lesssim\,&\left|\sigma(s,y+h,0)-\sigma(s,y,0)\right|^2+\left|\int_0^{u_\ep^{g,n}(s,y)}[\sigma'_{\xi}(s,y+h,\xi)-\sigma'_{\xi}(s,y,\xi)d\xi \right|^2\\
    \lesssim\,&\left|\sigma'_x(s,y+\zeta h,0)\right|^2\cdot |h|^2+\left|u_\ep^{g,n}(s,y) \right|^2\cdot |h|^2\\
    \lesssim\,& \left(1+\left|u_\ep^{g,n}(s,y) \right|^2\right)\cdot |h|^2.
   \end{split}
     \end{equation}
   By a change of variable, \eqref{DuSigamy} and \eqref{DuxSigamy}, we have
   \begin{align*}
     \cA_1(t,x) \lesssim \bigg(\int_{0}^{t}\int_{\RR}p^2_{t-s}(x-y)\cdot \left(1+  \big|u_\ep^{g,n}(s,y)\big|^2\right)dyds\bigg)^{\frac p2} .
    \end{align*}
   By a change of variable, Minkowski's inequality,   Lemma \ref{TechLemma1}  and Jensen's inequality with respect to
 \begin{equation}\label{scale}
   p^2_{t-s}(y)dy\simeq (t-s)^{-\frac{1}{2}}p_{\frac{t-s}{2}}(y)dy,
   \end{equation}
   we have
   \begin{equation}\label{D1Bdd}
   \begin{split}
 &\lc\int_\RR \cA_1(t,x) \lambda(x)dx \rc^{\frac 2p}\\
 \lesssim\,&
 \int_{0}^{t}\int_{\RR}p^2_{t-s}(y)\cdot \left(\int_{\RR}\left(1+\big|u_\ep^{g,n}(s,x)\big|^p\right)\lambda(x-y)dx \right)^{\frac 2p}dyds\\
 \lesssim\,&\int_{0}^{t}(t-s)^{-\frac 12}\cdot \left(\int_{\RR^2}p_{\frac{t-s}{2}}(y)\left(1+\big|u_\ep^{g,n}(s,x)\big|^p\right)\lambda(x-y)dxdy \right)^{\frac 2p}ds\\
 \lesssim\,&\int_{0}^{t} (t-s)^{-\frac 12}\cdot \left(1+\|u_{\ep}^{g,n}(s,\cdot)\|^2_{L^p_{\lambda}(\RR)}\right)ds.
   \end{split}
   \end{equation}
 By \eqref{Lipschitzianuniformlycondu}, a change of variable, Minkowski's inequality, Jensen's inequality and Lemma \ref{TechLemma1}, we have
   \begin{align}\label{D1Bdd2}
 &\lc\int_\RR \cA_2(t,x) \lambda(x)dx \rc^{\frac 2p}\notag\\
 \lesssim\,&\lc\int_\RR \bigg(\int_{0}^{t}\int_{\RR} p^2_{t-s}(x-y)\cdot  \big|u_\ep^{g,n}(s,y+h)-u_\ep^{g,n}(s,y)\big|^2\cdot|h|^{2H-2}dhdyds\bigg)^{\frac p2}\lambda(x)dx \rc^{\frac 2p}\notag\\
 \lesssim\,&\int_{0}^{t}\int_{\RR^2}p^2_{t-s}(y)\cdot \left(\int_{\RR}\big|u_\ep^{g,n}(s,x+h)-u_\ep^{g,n}(s,x)\big|^p\lambda(x-y)dx \right)^{\frac 2p}\cdot|h|^{2H-2}dhdyds\\
 \lesssim\,&\int_{0}^{t}\int_{\RR}(t-s)^{-\frac 12}\cdot \left(\int_{\RR^2}p_{\frac{t-s}{2}}(y)\big|u_\ep^{g,n}(s,x+h)-u_\ep^{g,n}(s,x)\big|^p\lambda(x-y)dydx \right)^{\frac 2p}\cdot|h|^{2H-2}dhds\notag\\
 \lesssim \,&\int_{0}^{t}(t-s)^{-\frac 12}\cdot \lt[\cN^*_{\frac 12-H,\,p}u_\ep^{g,n}(s)\rt]^2 ds.\notag
   \end{align}
   By \eqref{lineargrowthuniformlycond}, a change of variable, Minkowski's inequality, Jensen's inequality   {with respect to $(t-s)^{1-H}\left|D_{t-s}(y,h)\right|^2 |h|^{2H-2}dydh$} and Lemma \ref{TechLemma5}, we have
   \begin{align}\label{D1Bdd3}
 &\lc\int_\RR \cA_3(t,x) \lambda(x)dx \rc^{\frac 2p} \notag\\
 \lesssim\,&\int_{0}^{t}\int_{\RR^2}|D_{t-s}(y,h)|^2\cdot \left(1+\int_{\RR}\big|u_\ep^{g,n}(s,x)\big|^p\lambda(x-y)dx \right)^{\frac 2p}\cdot|h|^{2H-2}dhdyds\notag\\
 \lesssim\,&\int_{0}^{t}(t-s)^{H-1}\cdot \Bigg(\int_{\RR^3}(t-s)^{1-H}\cdot \big|D_{t-s}(y,h)\big|^2\\
 &\, \ \,\,\,\, \quad\,\,\,\, \quad\,\,\,\, \quad \,\,\,\, \quad \cdot \left(1+\big|u_\ep^{g,n}(s,x)\big|^p\right)\lambda(x-y)\cdot|h|^{2H-2}dhdydx \Bigg)^{\frac 2p}ds \notag\\
 \lesssim\,&\int_{0}^{t}(t-s)^{H-1}\cdot \left(1+\|u_{\ep}^{g,n}(s,\cdot)\|^2_{L^p_{\lambda}( \RR)} \right) ds.\notag
   \end{align}
Thus, \eqref{uepLp}, \eqref{D1Bdd}, \eqref{D1Bdd2} and  \eqref{D1Bdd3} yield that
\begin{align}\label{ueptcdot-350}
  \|u_{\ep}^{g,n+1}(t,\cdot)\|_{L^p_{\lambda}( \RR)}^2
   =\, &\lc \int_\RR   |u_\ep^{g,n+1}(t,x)|^p \lambda(x) dx \rc^{\frac 2p}\nonumber\\
 \ls\,&  1+ \int_{0}^{t} \left((t-s)^{H-1}+(t-s)^{-\frac{1}{2}}\right)\cdot \|u_{\ep}^{g,n}(s,\cdot)\|^2_{L^p_{\lambda}( \RR)}  ds \\
 &+\int_{0}^{t}(t-s)^{-1/2} \lt[\cN^*_{\frac 12-H,\,p}u_\ep^{g,n}(s)\rt]^2  ds.\nonumber
     \label{e.4.42}
\end{align}

{\bf Step 3.} This step is devoted to estimating $\mathcal N^*_{\frac12-H,\, p} u_{\ep}^{g,n}(t)$.
Using the Cauchy-Schwarz inequality and \eqref{eq H compare}, we have
   \begin{align*}
      & \left|u_\ep^{g,n+1}(t,x)-u_\ep^{g,n+1}(t,x+h)\right|^p  \\
     \lesssim\,&
      \lc\int_{0}^{t}\left\|D_{t-s}(x-\cdot,h)\sigma(s,\cdot,u_\ep^{g,n}(s,\cdot))\right\|_{\HH_\e}^2ds\rc^{\frac p2}  \nonumber\\
     \lesssim\,&
      \lc\int_{0}^{t}\left\|D_{t-s}(x-\cdot,h)\sigma(s,\cdot,u_\ep^{g,n}(s,\cdot))\right\|_{\HH}^2ds\rc^{\frac p2}  \nonumber\\
     \simeq\,&
       \bigg( \int_{0}^{t}\int_{\RR^2} \Big|D_{t-s}(x-y-z,h)\sigma(s,y+z,u_\ep^{g,n}(s,y+z)) \\
       &\qquad\qquad -D_{t-s}(x-z,h)\sigma(s,z,u_\ep^{g,n}(s,z))\Big|^2\cdot|y|^{2H-2} dzdyds\bigg)^{\frac p2} \\
     \lesssim\,& \cI_1(t,x,h)+\cI_2(t,x,h)+\cI_3(t,x,h),
   \end{align*}
   where
   \begin{align*}
     \cI_1(t,x,h):=\,&  \bigg(\int_{0}^{t}\int_{\RR^2}\left| D_{t-s}(x-y-z,h)\right|^2 \\
     &\quad \qquad\quad\cdot\big|\sigma(s,y+z,u_\ep^{g,n}(s,y+z))-\sigma(s,z,u_\ep^{g,n}(s,y+z)) \big|^2\cdot |y|^{2H-2}dzdyds\bigg)^{\frac p2};\\
     \cI_2(t,x,h):=\,&  \bigg(\int_{0}^{t}\int_{\RR^2}\left| D_{t-s}(x-y-z,h)\right|^2 \\
     &\quad\qquad\quad\cdot\big|\sigma(s,z,u_\ep^{g,n}(s,y+z))-\sigma(s,z,u_\ep^{g,n}(s,z)) \big|^2\cdot |y|^{2H-2}dzdyds\bigg)^{\frac p2};\\
     \cI_3(t,x,h):=\,& \bigg(\int_{0}^{t}\int_{\RR^2} \left|\Box_{t-s}(x-z,y,h)\right|^2\cdot  \big|\sigma(s,z,u_\ep^{g,n}(s,z))\big|^2\cdot |y|^{2H-2} dzdyds\bigg)^{\frac p2}.
   \end{align*}
   Therefore, by \eqref{N*pNorm}, we have
   \begin{equation}\label{sum-I-1-3-445}
   \begin{split}
     \lk\cN^*_{\frac 12-H,\,p}u_\ep^{g,n+1}(t)\rk^2=\,&\int_{\RR}\lc \int_\RR \left|u_\ep^{g,n+1}(t,x)-u_\ep^{g,n+1}(t,x+h)\right|^p \lambda(x) dx\rc^{\frac 2p} \cdot |h|^{2H-2} dh\\
     \lesssim\,&\sum_{j=1}^{3}\int_{\RR}\lc \int_\RR \cI_j(t,x,h) \lambda(x) dx\rc^{\frac 2p} \cdot |h|^{2H-2} dh.
   \end{split}
   \end{equation}
 
For the first term of (\ref{sum-I-1-3-445}), by \eqref{DuSigamy}, \eqref{DuxSigamy} and a change of variable, we have
\begin{align}\label{I11Bdd}
  \cI_1(t,x,h)\lesssim\,&  \bigg(\int_{0}^{t}\int_{\RR}\int_{|y|>1}\left| D_{t-s}(x-y-z,h)\right|^2\cdot
     \left(1+\big|u_\ep^{g,n}(s,y+z)\big|^2\right)\cdot |y|^{2H-2}dzdyds\bigg)^{\frac p2}\notag\\
     &+\,\bigg(\int_{0}^{t}\int_{\RR}\int_{|y|\leq1}\left| D_{t-s}(x-y-z,h)\right|^2\cdot
     \left( 1+\big|u_\ep^{g,n}(s,y+z)\big|^2\right) \cdot |y|^{2H}dzdyds\bigg)^{\frac p2}\notag\\
     =\,&\bigg(\int_{0}^{t}\int_{\RR}\int_{|y|>1}\left| D_{t-s}(x-z,h)\right|^2\cdot  \left(1+\big|u_\ep^{g,n}(s,z)\big|^2\right) \cdot |y|^{2H-2}dzdyds\bigg)^{\frac p2}\\
     &+ \bigg(\int_{0}^{t}\int_{\RR}\int_{|y|\leq1}\left| D_{t-s}(x-z,h)\right|^2 \cdot \left(1+\big|u_\ep^{g,n}(s,z)\big|^2\right)\cdot |y|^{2H}dzdyds\bigg)^{\frac p2}\notag\\
      \lesssim\,&\bigg(\int_{0}^{t}\int_{\RR}\left| p_{t-s}(z-h)-p_{t-s}(z)\right|^2\cdot \left(1+\big|u_\ep^{g,n}(s,x+z)\big|^2\right) dzds\bigg)^{\frac p2}.\notag
  \end{align}
By (\ref{I11Bdd}), a change of variable, Minkowski's inequality, Lemma \ref{TechLemma5} and Jensen's inequality with respect to $(t-s)^{1-H}\left|D_{t-s}(z,h)\right|^2 |h|^{2H-2}dzdh$, we have
   \begin{align}\label{I1Bdd}
      &\int_{\RR}\lt| \int_\RR \cI_1(t,x,h) \lambda(x) dx\rt|^{\frac 2p}\cdot |h|^{2H-2} dh \notag\\
      \lesssim\,&\int_{\RR}\bigg| \int_\RR \bigg(\int_{0}^{t}\int_{\RR}\left| D_{t-s}(z,h)\right|^2 \left(1+\big|u_\ep^{g,n}(s,x) \big|^2\right) dzds\bigg)^{\frac p2}\lambda(x-z) dx\bigg|^{\frac 2p} \cdot|h|^{2H-2} dh \notag\\
      \lesssim\,&\int_{0}^{t}\int_{\RR^2}\left| D_{t-s}(z,h)\right|^2\cdot |h|^{2H-2}\left(\int_\RR\left(1+\big|u_\ep^{g,n}(s,x) \big|^p\right) \lambda(x-z) dx\right)^{\frac{2}{p}}
      dzdhds\\
      \lesssim\,& \int_{0}^{t}(t-s)^{H-1}\cdot \bigg(\int_{\RR^3} (t-s)^{1-H}\cdot \left|D_{t-s}(z,h)\right|^2\cdot |h|^{2H-2}\cdot \left(1+\big|u_\ep^{g,n}(s,x)\big|^p \right) \lambda(x-z) dxdzdh\bigg)^{\frac 2p}ds \notag\\
      \lesssim\,& \int_{0}^{t} (t-s)^{H-1}\cdot \left(1+\left\|u_\ep^{g,n}(s,\cdot)\right\|_{L^p_{\lambda}( \RR)}^{2}\right) ds.\notag
   \end{align}
   For the second term of (\ref{sum-I-1-3-445}), by \eqref{Lipschitzianuniformlycondu} and a change of variable, we have  
\begin{align}\label{I21Bdd}
  \cI_2(t,x,h)\lesssim\,&  \bigg(\int_{0}^{t}\int_{\RR^2}\left| D_{t-s}(x-y-z,h)\right|^2\cdot\big|u_\ep^{g,n}(s,y+z)-u_\ep^{g,n}(s,z) \big|^2\cdot |y|^{2H-2}dzdyds\bigg)^{\frac p2}\notag\\
    =\,&\bigg(\int_{0}^{t}\int_{\RR^2}\left| D_{t-s}(x-z,h)\right|^2\cdot\big|u_\ep^{g,n}(s,z)-u_\ep^{g,n}(s,z-y) \big|^2\cdot |y|^{2H-2}dzdyds\bigg)^{\frac p2}\notag\\
    =\,&\bigg(\int_{0}^{t}\int_{\RR^2}\left| D_{t-s}(x-z,h)\right|^2\cdot\big|u_\ep^{g,n}(s,y+z)-u_\ep^{g,n}(s,z) \big|^2\cdot |y|^{2H-2}dzdyds\bigg)^{\frac p2}\\
      =\,&\bigg(\int_{0}^{t}\int_{\RR^2}\left| p_{t-s}(z-h)-p_{t-s}(z)\right|^2 \notag\\
     & \, \,\, \qquad\quad\cdot\big|u_\ep^{g,n}(s,x+y+z)-u_\ep^{g,n}(s,x+z) \big|^2\cdot|y|^{2H-2}dzdyds\bigg)^{\frac p2}.\notag
  \end{align}
By (\ref{I21Bdd}), a change of variable, Minkowski's inequality, Jensen's inequality and Lemma \ref{TechLemma5}, we have
   \begin{align}\label{I2Bdd}
      &\int_{\RR}\lt| \int_\RR \cI_2(t,x,h) \lambda(x) dx\rt|^{\frac 2p} \cdot|h|^{2H-2} dh \notag\\
      \lesssim\,&\int_{\RR}\bigg| \int_\RR \bigg(\int_{0}^{t}\int_{\RR^2}\left| D_{t-s}(z,h)\right|^2\cdot \big|u_\ep^{g,n}(s,x+y)-u_\ep^{g,n}(s,x) \big|^2\cdot |y|^{2H-2}dzdyds\bigg)^{\frac p2}\notag\\
     &\qquad\cdot \lambda(x-z) dx\bigg|^{\frac 2p} \cdot|h|^{2H-2} dh \notag\\
      \lesssim\,&\int_{0}^{t}\int_{\RR^3}\left| D_{t-s}(z,h)\right|^2 \cdot |y|^{2H-2}\cdot |h|^{2H-2}\\
      &\qquad\cdot \left(\int_\RR\big|u_\ep^{g,n}(s,x+y)-u_\ep^{g,n}(s,x) \big|^p\lambda(x-z) dx\right)^{\frac{2}{p}}
      dzdydhds\notag\\
      \lesssim\,& \int_{0}^{t}\int_\RR (t-s)^{H-1} \cdot \bigg(\int_{\RR^3} (t-s)^{1-H}\cdot \left|D_{t-s}(z,h)\right|^2\cdot |h|^{2H-2} \notag\\
      &\qquad\cdot \big|u_\ep^{g,n}(s,x+y)-u_\ep^{g,n}(s,x)\big|^p  \lambda(x-z) dxdzdh\bigg)^{\frac 2p}\cdot |y|^{2H-2} dyds \notag\\
      \lesssim\,& \int_{0}^{t} (t-s)^{H-1}\cdot \lk\cN^*_{\frac 12-H,\,p}u_\ep^{g,n}(s)\rk^2 ds.\notag
     \end{align}
For the last term of (\ref{sum-I-1-3-445}), by \eqref{lineargrowthuniformlycond} and a change of variable, we have
\begin{align*}
     \cI_3(t,x,h)\lesssim\,&  \bigg(\int_{0}^{t}\int_{\RR^2} \left|\Box_{t-s}(x-z,y,h)\right|^2\cdot \left(1+ \big|u_\ep^{g,n}(s,z)\big|^2\right)\cdot|y|^{2H-2} dzdyds\bigg)^{\frac p2}\\
     =\,&\bigg(\int_{0}^{t}\int_{\RR^2} \left|\Box_{t-s}(z,y,h)\right|^2\cdot \left(1+ \big|u_\ep^{g,n}(s,x-z)\big|^2\right)\cdot|y|^{2H-2} dzdyds\bigg)^{\frac p2}\\
    \leq\,&\lc \int_{0}^{t}\int_{\RR^2}|\Box_{t-s}(-z,y,h)|^2\cdot|u_\ep^{g,n}(s,x+z)-u_\ep^{g,n}(s,x)|^2 \cdot|y|^{2H-2}dzdyds\rc^{\frac p2}\\
  &+\lc \int_{0}^{t}\int_{\RR^2} |\Box_{t-s}(-z,y,h)|^2\cdot \left( 1+\big|u_\ep^{g,n}(s,x)\big|^2\right)\cdot |y|^{2H-2}dzdyds\rc^{\frac p2} \\
     =:\,&\cI_{31}(t,x,h) +\cI_{32}(t,x,h).
   \end{align*}
By a change of variable, Minkowski's inequality and  Lemma \ref{TechLemma4}, we have
   \begin{align}\label{I31Bdd}
      &\int_{\RR}\lt| \int_\RR \cI_{31}(t,x,h) \lambda(x) dx\rt|^{\frac 2p} \cdot|h|^{2H-2} dh\notag \\
      \lesssim\,&\int_{0}^{t}\int_{\RR^3}|\Box_{t-s}(z,y,h)|^2\cdot\left(\int_\RR|u_\ep^{g,n}(s,x+z)-u_\ep^{g,n}(s,x)|^p\lambda(x)dx\right)^{\frac{2}{p}}\\
      &\, \qquad \cdot  |h|^{2H-2} \cdot |y|^{2H-2}dhdzdyds\notag \\
       \lesssim\,&\int_{0}^{t} (t-s)^{H-1}\cdot \lk\cN^*_{\frac 12-H,\,p}u_\ep^{g,n}(s)\rk^2 ds.\notag
     \end{align}
   By a change of variable, Minkowski's inequality and Lemma \ref{TechLemma2},    we have
   \begin{equation}\label{I322Bdd}
   \begin{split}
      &\int_{\RR}\lt| \int_\RR \cI_{32}(t,x,h) \lambda(x) dx\rt|^{\frac 2p} \cdot|h|^{2H-2} dh\\
     \lesssim\,&\int^t_0\int_{\RR^3}|\Box_{t-s}(z,y,h)|^2\cdot\left(1+\int_\RR\left|u_\ep^{g,n}(s,x)\right|^p\lambda(x)dx\right)^{\frac{2}{p}}\cdot |h|^{2H-2}  \cdot|y|^{2H-2}dhdzdyds\\
      \lesssim\,&\int_{0}^{t} (t-s)^{2H-\frac 32}\cdot \lc 1+\|u_\ep^{g,n}(s,\cdot)\|_{L^p_{\lambda}( \RR)}^{2}\rc ds.
   \end{split}
   \end{equation}
   Thus, by \eqref{sum-I-1-3-445}, \eqref{I1Bdd}, \eqref{I2Bdd}, \eqref{I31Bdd} and \eqref{I322Bdd}, we have
   \begin{equation}\label{e.4.48}
   \begin{split}
     \lk\cN^*_{\frac 12-H,\,p}u_\ep^{g,n+1}(t)\rk^2
     \lesssim\,&1+\int_{0}^{t} \left((t-s)^{2H-\frac 32}+(t-s)^{H-1}\right)\cdot  \left\|u_\ep^{g,n}(s,\cdot)\right\|_{L^p_{\lambda}( \RR)}^{2} ds\,\\
     &+\int_{0}^{t}(t-s)^{H-1}\cdot   \lk\cN^*_{\frac 12-H,\,p}u_\ep^{g,n}(s)\rk^2 ds.
   \end{split}
   \end{equation}

  {{\bf Step 4.}}
Let
$$
    \Psi^n_\ep(t):=\|u_\ep^{g,n}(t,\cdot)\|_{L^p_{\lambda}(\RR)}^{2}+\lk\cN^*_{\frac 12-H,\,p}u_\ep^{g,n}(t)\rk^2.
  $$
Putting  \eqref{ueptcdot-350} and  \eqref{e.4.48} together, there exists a constant $C_{T,p,H,N}>0$ such that
   \begin{align*}
     \Psi^{n+1}_\ep(t)\leq C_{T,p,H,N}\left(1 +\int_{0}^{t} (t-s)^{2H-\frac 32}\Psi^n_\ep(s) ds\right).
  \end{align*}
By   { the extension of Gr\"onwall's lemma (\cite[lemma 15]{Dalang1999})}, we have
\begin{align}\label{eq cg}
    \sup_{n\geq1}\sup_{t\in[0,T]}\Psi^n_\ep(t)\leq\, C_g,
\end{align}
where $C_g$ is a constant independent of   $\e$.

For any fixed $\e>0$,  
  by  \eqref{eq u gn} and \eqref{eq cg}, we have that, for any $t\in [0,T]$,
\begin{equation}\label{eq F11}
\begin{split}
\|u^{g}_\ep(t,\cdot)\|_{L^p_\lambda(\RR)}
= \,\lim_{n\rightarrow\infty}\left(\int_\RR\left[|u^{g,n}_\ep(t,x)|^p\right]\lambda(x)dx\right)^{\frac{1}{p}}
\leq \,  C_g.
\end{split}
\end{equation}
  For any fixed $t$ and $h$, we have  by     \eqref{eq u gn},
  $$\|u^{g,n}_\e(t,\cdot+h)-u^{g,n}_\e(t,\cdot)\|_{L^p_{\lambda}(\RR)}\rightarrow\|u^{g}_\e(t,\cdot+h)-u^{g}_\e(t,\cdot)\|_{L^p_{\lambda}(\RR)},  \text{ as } n\rightarrow\infty.$$
By Fatou's lemma and \eqref{eq cg}, we have
\begin{align}\label{eq F12}
       \cN^*_{\frac 12-H,\,p}u_\ep^{g}(t)=&\int_{\RR}\|u^g_\e(t,\cdot+h)-u^g_\e(t,\cdot)\|^2_{L^p_{\lambda}(\RR)}|h|^{2H-2}dh\\\notag
      \leq&\,\liminf_{n\rightarrow\infty}
      \int_{\RR}\|u^{g,n}_\e(t,\cdot+h)-u^{g,n}_\e(t,\cdot)\|^2_{L^p_{\lambda}(\RR)}|h|^{2H-2}dh\leq C_g.
  \end{align}
By \eqref{eq F11} and \eqref{eq F12}, we obtain that
  $$\sup_{\e>0}\|u_\ep^{g}\|_{Z^p_{\lambda,T}}\leq\sup_{\e>0}\sup_{t\in[0,T]}\|u^{g}_\ep(t,\cdot)\|_{L^p_\lambda(\RR)}+
\sup_{\e>0}\sup_{t\in[0,T]}\cN^*_{\frac 12-H,\,p}u_\ep^{g}(t)<\infty.$$

  The proof is complete.
\end{proof}

 \begin{lemma}\label{TimeSpaceRegBdd}
Let $u_\ep^g$ be the approximate  process defined by \eqref{eq u h e}.
   \begin{itemize}
     \item[(i).] If {\bf $p>\frac{6}{4H-1}$}, then there exists a constant $C_{T,p,H,N}>0$ such that
     \begin{equation}\label{359359}
      \sup\limits_{t\in[0,T],\,x\in \RR}\lambda^{\frac{1}{p}}(x)\cdot  \cN_{\frac 12-H}u_\ep^g(t,x) \leq C_{T,p,H,N}\left(1+\|u_\ep^g\|_{Z_{\lambda,T}^p}\right).
     \end{equation}
     \item[(ii).] If    $p>\frac{3}{H}$  and  
     $0<\gamma<\frac H2-\frac{3}{2p}$, then there exists a constant $C_{T ,p,H,N,\gamma}>0$ such that
     \begin{equation}\label{Hol1}
       \sup\limits_{\substack{t,\,t+h\in[0,T], \\ x\in \RR}}\lambda^{\frac{1}{p}}(x) \cdot   |u_\ep^g(t+h,x)-u_\ep^g(t,x)| \leq C_{T ,p,H,N,\gamma}|h|^{\gamma}\cdot\left(1+\|u_\ep^g\|_{Z_{\lambda,T}^p}\right).
     \end{equation}
     \item[(iii).] If  $p>\frac{3}{H}$  and
   $0<\gamma<H-\frac{3}{p}$, then there exists a constant $C_{T ,p,H,N,\gamma}>0$ such that
     \begin{equation}\label{Lemma-4.3-iii}
       \sup\limits_{\substack{t\in[0,T], \\ x,\,y\in \RR}}\frac{|u_\ep^g(t,x)-u_\ep^g(t,y)|}{\lambda^{-\frac{1}{p}}(x)+\lambda^{-\frac{1}{p}}(y)}  \leq C_{T,p,H,N,\gamma}|x-y|^{\gamma}\cdot\left(1+\|u_\ep^g\|_{Z_{\lambda,T}^p}\right).
     \end{equation}
   \end{itemize}
 \end{lemma}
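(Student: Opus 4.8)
The plan is to handle the three estimates by one common scheme, adapted from the proof of Lemma~\ref{UniBExist}, in which the stochastic integral is replaced by the deterministic pairing against $g$. For each part I would start from the defining equation \eqref{eq u h e}, write the relevant increment of $u_\ep^g$ as a single time integral of an $\HH_\e$-inner product, and apply the Cauchy--Schwarz inequality in the $ds$-variable. Using \eqref{g-interal-bound} to bound $\int_0^t\|g(s,\cdot)\|_{\HH_\e}^2\,ds\le N$ and the comparison \eqref{eq H compare} to pass from $\|\cdot\|_{\HH_\e}$ to $\|\cdot\|_{\HH}$, I reduce every quantity to $N\int_0^t\|K_{t-s}(x,\cdot)\,\sigma(s,\cdot,u_\ep^g(s,\cdot))\|_{\HH}^2\,ds$, where $K_{t-s}$ denotes the appropriate heat-kernel increment, and then expand this $\HH$-norm through the $|y|^{2H-2}$-weighted representation \eqref{eq H product}.

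The three cases differ only in the choice of $K_{t-s}$: for (i) it is the spatial increment $D_{t-s}(x-\cdot,k)$ coming from $u_\ep^g(t,x+k)-u_\ep^g(t,x)$; for (ii) it is the temporal kernel difference $p_{t+h-s}-p_{t-s}$, together with a boundary contribution from the integral over $[t,t+h]$; and for (iii) it is the spatial difference $p_{t-s}(x-\cdot)-p_{t-s}(y-\cdot)$. In each case I would split the expanded $\HH$-norm into three pieces -- the spatial increment of $\sigma$ (controlled by \eqref{lineargrowthuniformlycond}, \eqref{DuySigam} and \eqref{DuxSigam}), the $u$-increment of $\sigma$ (controlled by the Lipschitz bound \eqref{Lipschitzianuniformlycondu}), and the pure growth term (controlled by \eqref{lineargrowthuniformlycond}) -- exactly as in the proof of Lemma~\ref{UniBExist}. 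The remaining heat-kernel integrals would then be estimated by the Appendix lemmas \ref{TechLemma1}, \ref{TechLemma2}, \ref{TechLemma4} and \ref{TechLemma5} together with the scaling \eqref{scale}, producing the singular time factors $(t-s)^{H-1}$, $(t-s)^{2H-\frac32}$ and $(t-s)^{-\frac12}$ as well as an extra power of the increment size that carries the Hölder gain.

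The decisive and most delicate step is the passage from these pointwise-in-$x$ quantities to the integrated norm $\|u_\ep^g\|_{Z_{\lambda,T}^p}$ on the right-hand side, and this is where the weight $\lambda$ and the restrictions on $p$ enter. After the $\sigma$-decomposition I am left with pointwise spatial integrals of the form $\int_\RR p^2_{t-s}(x-y)(1+|u_\ep^g(s,y)|^2)\,dy$ and their increment analogues; to replace $|u_\ep^g(s,y)|^2$ and $|u_\ep^g(s,y+\cdot)-u_\ep^g(s,y)|^2$ by $\|u_\ep^g(s,\cdot)\|_{L^p_\lambda(\RR)}^2$ and $[\cN^*_{\frac12-H,\,p}u_\ep^g(s)]^2$ I would apply Hölder's inequality in $y$ with exponents $p/2$ and its conjugate against the weight $\lambda$, using the semigroup property of $p_{t-s}$ and the power decay and monotonicity of $\lambda$ to absorb the prefactor $\lambda^{\frac1p}(x)$ in (i) and (ii) and the factor $\lambda^{-\frac1p}(x)+\lambda^{-\frac1p}(y)$ in (iii). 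The hard part will be the exponent bookkeeping: one must arrange that $\lambda^{\frac1p}(x)$ exactly offsets the spatial growth of $u_\ep^g$ while the residual convolution of $\lambda$ against a power of $p_{t-s}$ (or its increments) stays bounded, and simultaneously keep the ensuing time integral $\int_0^t(t-s)^{\alpha}\,ds$ convergent. The thresholds are calibrated precisely to this balance: $p>\frac{6}{4H-1}$ in (i) is what forces the exponent $\alpha$ to exceed $-1$ after the Hölder step in $y$ for the most singular term, while $p>\frac{3}{H}$ in (ii) and (iii) is exactly the condition making the admissible Hölder ranges $0<\gamma<\frac H2-\frac{3}{2p}$ and $0<\gamma<H-\frac3p$ nonempty. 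Taking the supremum over $t$ and over the spatial variables then yields the three stated inequalities.
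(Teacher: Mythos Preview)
Your approach diverges from the paper's at the most delicate step: the passage from the integrated $Z^p_{\lambda,T}$-norm to the pointwise (sup over $t,x$) bound. The paper does not attempt this by a single H\"older inequality in the spatial variable; instead it uses the factorization method. Starting from \eqref{eq u h e}, the paper rewrites $\Phi(t,x):=u_\ep^g(t,x)-1$ via the identity $\int_s^t (t-r)^{\alpha-1}(r-s)^{-\alpha}\,dr=\pi/\sin(\pi\alpha)$ as
\[
\Phi(t,x)\simeq\int_0^t\!\int_\RR (t-r)^{\alpha-1}p_{t-r}(x-z)\,J_\alpha(r,z)\,dz\,dr,
\]
where $J_\alpha(r,z)=\int_0^r (r-s)^{-\alpha}\langle p_{r-s}(z-\cdot)\sigma(s,\cdot,u_\ep^g(s,\cdot)),g(s,\cdot)\rangle_{\HH_\e}\,ds$. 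The relevant increment of $\Phi$ is then estimated by H\"older in $(r,z)$ with exponents $p$ and $q=p/(p-1)$, which cleanly separates a deterministic factor (producing the weight $\lambda^{-1/p}(x)$ and the H\"older gain in $h$ or $|x-y|$) from an $L^p([0,T];L^p_\lambda(\RR))$-norm of $J_\alpha$ or $\Delta_h J_\alpha$. The latter is bounded by $1+\|u_\ep^g\|_{Z^p_{\lambda,T}}$ using exactly your three-term $\sigma$-splitting and the Appendix kernel lemmas, but in the \emph{integrated} $L^p_\lambda$ sense as in Lemma~\ref{UniBExist}. The thresholds $p>\frac{6}{4H-1}$ and $p>\frac{3}{H}$ arise precisely from the requirement that the factorization parameter $\alpha$ lie in a nonempty interval (namely $\tfrac{3}{2p}<\alpha<H-\tfrac14$ for~(i), and $\tfrac{3}{2p}<\alpha<\tfrac{H}{2}$ for~(ii) and~(iii)).

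Your direct scheme---H\"older in $y$ with exponents $p/2$ and $p/(p-2)$ to trade $|u_\ep^g(s,y)|^2$ for $\|u_\ep^g(s,\cdot)\|_{L^p_\lambda}^2$---would leave you having to control quantities of the form
\[
\lambda^{2/p}(x)\Bigl(\int_\RR K(x-y)^{p/(p-2)}\lambda^{-2/(p-2)}(y)\,dy\Bigr)^{(p-2)/p}
\]
uniformly in $x$, with $K$ one of the pointwise kernel bounds of Lemmas~\ref{TechLemma3}--\ref{TechLemma4}. This weighted estimate with the \emph{growing} weight $\lambda^{-2/(p-2)}$ is not among the Appendix lemmas, is not obviously true with the right time singularity, and even if it could be pushed through it is not the mechanism from which the stated thresholds on $p$ and $\gamma$ emerge. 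The factorization trick is exactly the device that sidesteps this difficulty: it converts $L^p_\lambda$-control into pointwise control through an elementary outer H\"older step whose convergence conditions \emph{are} the thresholds.
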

\begin{proof}
   (i). By \eqref{eq u h e}, we have
\begin{align*} u^g_{\ep}(t,x+h)-u^g_{\ep} (t,x)
=\,&\int_0^t \left\langle  p_{t-s}(x+h-\cdot)\sigma(s,\cdot,u_{\ep}^g(s,\cdot)),g(s,\cdot) \right\rangle_{\mathcal H_{\ep}} ds\\
&-\int_0^t \left\langle  p_{t-s}(x-\cdot)\sigma(s,\cdot,u_{\ep}^g(s,\cdot)),g(s,\cdot) \right\rangle_{\mathcal H_{\ep}} ds\notag\\
=:&\,\Phi(t,x+h)-\Phi(t,x).\notag
\end{align*}
  Applying \eqref{eq product H e} and the Fubini theorem,
we have that, for any $\alpha\in(0,1)$,
 \begin{align}\label{461-transform}
  \Phi(t,x)=&\,\int_{0}^{t}\langle p_{t-s}(x-\cdot)\sigma(s,\cdot,u_\ep^{g}(s,\cdot)), g(s,\cdot) \rangle_{\HH_\e}ds \notag \\
  \simeq&\,\int_{0}^{t}\int_{\RR^2}p_{t-s}(x-y)\sigma(s,y,u_\ep^{g}(s,y))g(s,\tilde{y})f_{\e}(y-\tilde{y})dyd\tilde{y}ds \notag \\
  =&\,\frac{\sin(\pi\alpha)}{\pi}\int_{0}^{t}\int_{\RR^2}p_{t-s}(x-y)\left[\int^t_s(t-r)^{\alpha-1}(r-s)^{-\alpha}dr\right]\notag\\
  &\qquad\qquad\qquad\cdot\sigma(s,y,u_\ep^{g}(s,y))g(s,\tilde{y})f_{\e}(y-\tilde{y})dyd\tilde{y}ds \notag \\
  =&\,\frac{\sin(\pi\alpha)}{\pi}\int_{0}^{t}\int_{\RR^2}\left(\int_{\RR}p_{t-r}(x-z)p_{r-s}(z-y)dz\right)\left(\int^t_s(t-r)^{\alpha-1}(r-s)^{-\alpha}dr\right)\notag\\
  &\qquad\qquad\qquad\cdot\sigma(s,y,u_\ep^{g}(s,y))g(s,\tilde{y})f_{\e}(y-\tilde{y})dyd\tilde{y}ds\notag \\
  \simeq&\,\int_{0}^{t}\int_{\RR}(t-r)^{\alpha-1}\cdot p_{t-r}(x-z)\\
  &\qquad\cdot\left(\int^r_0(r-s)^{-\alpha}\cdot \int_{\RR^2} p_{r-s}(z-y)\sigma(s,y,u_\ep^{g}(s,y)) g(s,\tilde{y})f_{\e}(y-\tilde{y})dyd\tilde{y}ds\right)dzdr\notag \\
  \simeq&\,\int_{0}^{t}\int_{\RR}(t-r)^{\alpha-1} \cdot p_{t-r}(x-z)\notag\\
  &\qquad\cdot\left(\int^r_0(r-s)^{-\alpha}\langle p_{r-s}(z-\cdot)\sigma(s,\cdot,u_\ep^{g}(s,\cdot)),g(s,\cdot)\rangle_{\HH_\e} ds\right)dzdr\notag \\
  =&\,\int_{0}^{t}\int_{\RR}(t-r)^{\alpha-1}\cdot p_{t-r}(x-z)J_\alpha(r,z)dzdr, \notag
 \end{align}
where
\begin{align*}
J_\alpha(r,z):=\int^r_0(r-s)^{-\alpha}\langle p_{r-s}(z-\cdot)\sigma(s,\cdot,u_\ep^{g}(s,\cdot)),g(s,\cdot)\rangle_{\HH_\e} ds.
\end{align*}
Set
 \begin{align*}
 \Delta_hJ_\alpha(t,x):=J_\alpha(t,x+h)-J_\alpha(t,x).
 \end{align*}
Applying a change of variable, we have
 \begin{align*}
 &\Phi(t,x+h)-\Phi(t,x)\notag \\
 \simeq&\,\int_{0}^{t}\int_{\RR}(t-r)^{\alpha-1}\cdot p_{t-r}(x+h-z)\cdot J_\alpha(r,z)dzdr-\int_{0}^{t}\int_{\RR}(t-r)^{\alpha-1} \cdot p_{t-r}(x-z)\cdot J_\alpha(r,z)dzdr\notag \\
  =&\,\int_{0}^{t}\int_{\RR}(t-r)^{\alpha-1}\cdot p_{t-r}(x-z)\cdot J_\alpha(r,z+h)dzdr-\int_{0}^{t}\int_{\RR}(t-r)^{\alpha-1}\cdot p_{t-r}(x-z)\cdot J_\alpha(r,z)dzdr\notag \\
 =&\,\int_{0}^{t}\int_{\RR}(t-r)^{\alpha-1}\cdot p_{t-r}(x-z)\cdot \Delta_h J_\alpha(r,z)dzdr.
 \end{align*}
 Invoking Minkowski's inequality and   {H\"older's} inequality with $\frac{1}{p}+\frac{1}{q}=1$, we have
 \begin{align}\label{468}
  &\int_\RR\left|\Phi(t,x+h)-\Phi(t,x)\right|^2\cdot|h|^{2H-2}dh\notag \\
 \simeq&\,\int_\RR\left|\int_{0}^{t} \int_\RR(t-r)^{\alpha-1} \cdot p_{t-r}(x-z)\Delta_hJ_\alpha(r,z)dzdr\right|^2\cdot|h|^{2H-2}dh\notag \\
  \lesssim&\,\left(\int_{0}^{t} \int_\RR(t-r)^{\alpha-1}\cdot  p_{t-r}(x-z)\left[\int_\RR|\Delta_hJ_\alpha(r,z)|^2\cdot|h|^{2H-2}dh\right]^{\frac{1}{2}}dzdr\right)^2\notag \\
   \lesssim&\,\left(\int_{0}^{t} \int_\RR(t-r)^{q(\alpha-1)} \cdot p^q_{t-r}(x-z)\lambda^{-\frac{q}{p}}(z)dzdr\right)^{\frac{2}{q}} \\
   &\cdot\left(\int_{0}^{T}\int_\RR\left[\int_\RR|\Delta_hJ_\alpha(r,z)|^2\cdot|h|^{2H-2}dh\right]^{\frac{p}{2}}\lambda(z)dzdr\right)^{\frac{2}{p}}\notag \\
   \lesssim&\,\lambda^{-\frac{2}{p}}(x)\left(\int_{0}^{t} (t-r)^{q(\alpha-\frac{3}{2}+\frac{1}{2q})}dr\right)^{\frac{2}{q}}\notag\\ &\cdot\left(\int_{0}^{T}\int_\RR\left[\int_\RR\left|\Delta_hJ_\alpha(r,z)\right|^2\cdot|h|^{2H-2}dh\right]^{\frac{p}{2}}\lambda(z)dzdr\right)^{\frac{2}{p}},\notag
 \end{align}
  where in the last step  we have used   Lemma \ref{TechLemma1}  and the following fact:   {\begin{align}\label{pqt-s12simeq}
  p^q_{t-r}(x-z)\simeq(t-r)^{\frac{1-q}{2}}p_{\frac{t-r}{q}}(x-z).
  \end{align}}
    If   $q(\alpha-\frac{3}{2}+\frac{1}{2q})>-1$, namely,    {if}  $\alpha>\frac{3}{2p}$,
 then
 \begin{equation}\label{phi}
 \begin{split}
  &\sup_{t\in[0,T],x\in\RR}\lambda^{\frac{1}{p}}(x)\left(\int_\RR|\Phi(t,x+h)-\Phi(t,x)|^2\cdot|h|^{2H-2}dh\right)^{\frac{1}{2}}\\
  \lesssim&\,\left(\int_{0}^{T}\int_\RR\left[\int_\RR|\Delta_hJ_\alpha(r,z)|^2\cdot|h|^{2H-2}dh\right]^{\frac{p}{2}}\lambda(z)dzdr\right)^{\frac{1}{p}}.
 \end{split}
 \end{equation}
Thus, to prove part $(i)$ we only need to prove that there exists some positive constant $C$, independent of $r\in[0,T]$, such that for $\frac{3}{2p}<\alpha<1$,
\begin{align}\label{4-21}
 \int_\RR\left[\int_\RR|\Delta_hJ_\alpha(r,z)|^2\cdot|h|^{2H-2}dh\right]^{\frac{p}{2}}\lambda(z)dz
\leq C\left(1+\left\|u_\ep^g\right\|^p_{Z_{\lambda,T}^p}\right).
 \end{align}

Now, it remains to show (\ref{4-21}).
Recall $D_t(x,h)$ defined by (\ref{TechDt}). Applying the Cauchy-Schwarz inequality,
 (\ref{eq H product}), (\ref{eq product H e}) and \eqref{g-interal-bound}, we have
\begin{align*}
  \Delta_hJ_\alpha(r,z)  \leq&\,\int^r_0(r-s)^{-\alpha}\cdot \big\|D_{r-s}(z-\cdot,h)\sigma(s,\cdot,u_\ep^{g}(s,\cdot))\big\|_{\HH_\e}\cdot\big\|g(s,\cdot)\big\|_{\HH_\e}ds\notag \\
  \lesssim&\,\left(\int^r_0(r-s)^{-2\alpha}\cdot \big\|D_{r-s}(z-\cdot,h)\sigma(s,\cdot,u_\ep^{g}(s,\cdot))\big\|^2_{\HH_\e}ds\right)^{\frac{1}{2}}
  \cdot\left(\int_0^r\big\|g(s,\cdot)\big\|_{\HH_\e}ds\right)^{\frac{1}{2}}\notag \\
  \lesssim&\,\left(\int^r_0(r-s)^{-2\alpha}\cdot\big\|D_{r-s}(z-\cdot,h)\sigma(s,\cdot,u_\ep^{g}(s,\cdot))\big\|^2_{\HH_\e}ds\right)^{\frac{1}{2}}\notag \\
  \lesssim&\,\bigg(\int^r_0\int_{\RR^2}(r-s)^{-2\alpha}\cdot \big|D_{r-s}(z-y-l,h)\sigma(s,y+l,u_\ep^{g}(s,y+l))\\
  &\qquad\qquad\qquad\qquad\quad-D_{r-s}(z-y,h)\sigma(s,y,u_\ep^{g}(s,y))\big|^2 \cdot|l|^{2H-2}dldyds\bigg)^{\frac{1}{2}}.
 \end{align*}

 Set
 \begin{align*}
 \mathcal{J}_1(r,z,h):=&\,\Bigg(\int^r_0\int_{\RR^2}(r-s)^{-2\alpha}\cdot\big|D_{r-s}(z-y-l,h)\big|^2\\
 &\qquad\qquad\cdot\big|\sigma(s,y+l,u_\ep^{g}(s,y+l))-\sigma(s,y,u_\ep^{g}(s,y+l))\big|^2\cdot|l|^{2H-2}dldyds\Bigg)^{\frac{p}{2}};\\
 \mathcal{J}_2(r,z,h):=&\,\Bigg(\int^r_0\int_{\RR^2}(r-s)^{-2\alpha}\cdot \big|D_{r-s}(z-y-l,h)\big|^2\\
 &\qquad\qquad\cdot\big|\sigma(s,y,u_\ep^{g}(s,y+l))-\sigma(s,y,u_\ep^{g}(s,y))\big|^2\cdot|l|^{2H-2}dldyds\Bigg)^{\frac{p}{2}};\\
 \mathcal{J}_3(r,z,h):=&\,\left(\int^r_0\int_{\RR^2}(r-s)^{-2\alpha}\cdot \big|\Box_{r-s}(z-y,l,h)\big|^2\cdot\big|\sigma(s,y,u_\ep^{g}(s,y))\big|^2
 \cdot|l|^{2H-2}dldyds\right)^{\frac{p}{2}}.
 \end{align*}
 By Minkowski's inequality, we have
 \begin{align}\label{mathcalJxiaoyudengyu}
\mathcal{J}
  \lesssim&\sum_{i=1}^{3}\left[\int_{\RR}\left(\int_{\RR}\mathcal{J}_i(r,z,h)\lambda(z)dz\right)^{\frac{2}{p}}|h|^{2H-2}dh\right]^{\frac{p}{2}}.
 \end{align}
 By the same technique as that in Step 2 of the proof for Lemma \ref{UniBExist}, we have
 \begin{align}
 \int_{\RR}\left(\int_{\RR}\mathcal{J}_1(r,z,h)\lambda(z)dz\right)^{\frac{2}{p}}|h|^{2H-2}dh
 \lesssim& \int_{0}^{r} (t-s)^{-2\alpha+H-1}\cdot \left(1+\|u_\ep^g(s,\cdot)\|_{L^p_{\lambda}( \RR)}^{2}\right) ds;\label{BoundMathcalJ1}\\
 \int_{\RR}\left(\int_{\RR}\mathcal{J}_2(r,z,h)\lambda(z)dz\right)^{\frac{2}{p}}|h|^{2H-2}dh
 \lesssim&\int_{0}^{r} (t-s)^{-2\alpha+H-1}\cdot \lk\cN^*_{\frac 12-H,\,p}u_\ep^g(s)\rk^2ds;\label{BoundMathcalJ2}
  \end{align}
   \begin{equation}\label{BoundMathcalJ3}
   \begin{split}
 \int_{\RR}\left(\int_{\RR}\mathcal{J}_3(r,z,h)\lambda(z)dz\right)^{\frac{2}{p}}|h|^{2H-2}dh
 \lesssim &\,\int_{0}^{r} (t-s)^{-2\alpha+2H-\frac 32}\cdot \left(1+ \|u_\ep^g(s,\cdot)\|_{L^p_{\lambda}( \RR)}^{2}\right) ds\\
 &+\int_{0}^{r} (t-s)^{-2\alpha+H-1} \cdot \lk\cN^*_{\frac 12-H,\,p}u_\ep^g(s)\rk^2ds.
  \end{split}
  \end{equation}
 By \eqref{mathcalJxiaoyudengyu}, \eqref{BoundMathcalJ1}, \eqref{BoundMathcalJ2}, \eqref{BoundMathcalJ3} and by using the same method as that in  the proof of (4.27) in \cite{HW2019}, we have
   \begin{equation}\label{bound-I}
   \begin{split}
  &\int_\RR\left[\int_\RR|\Delta_hJ_\alpha(r,z)|^2\cdot|h|^{2H-2}dh\right]^{\frac{p}{2}}\lambda(z)dz\\
  \le\,  & C\left(\int^r_0(r-s)^{-2\alpha+2H-\frac{3}{2}}+(r-s)^{-2\alpha+H-1}ds\right)^{\frac{p}{2}}\left(1+\|u_\ep^g\|^p_{Z_{\lambda,T}^p}\right).
  \end{split}
   \end{equation}
If $-2\alpha+2H-\frac{3}{2}>-1$ and $-2\alpha+H-1>-1$, namely, if $\alpha<H-\frac{1}{4}$,  then we see that
(\ref{4-21}) follows from  (\ref{bound-I}). This condition on $\alpha$ is combined with $\alpha>\frac{3}{2p}$ to become $\frac{3}{2p}<\alpha<H-\frac{1}{4}$. Therefore, we have proved that for any  $p>\frac{6}{4H-1}$,    \eqref{359359} holds.

   (ii). By  \eqref{eq u h e} and \eqref{461-transform}, we have
\begin{equation}\label{3.405.7}
\begin{split}
 &\Phi(t+h,x)-\Phi(t,x)  \\
\simeq&\,\int_{0}^{t}\int_{\RR}(t+h-r)^{\alpha-1} p_{t+h-r}(x-z)J_\alpha(r,z)dzdr \\
&-\int_{0}^{t}\int_{\RR}(t-r)^{\alpha-1} p_{t-r}(x-z)J_\alpha(r,z)dzdr \\
&+\int_{t}^{t+h}\int_{\RR}(t+h-r)^{\alpha-1} p_{t+h-r}(x-z)J_\alpha(r,z)dzdr \\
=&\,\int_{0}^{t}\int_{\RR}(t+h-r)^{\alpha-1}\left( p_{t+h-r}(x-z)-p_{t-r}(x-z)\right)J_\alpha(r,z)dzdr \\
&+\int_{0}^{t}\int_{\RR}\left((t+h-r)^{\alpha-1}-(t-r)^{\alpha-1}\right) p_{t-r}(x-z)J_\alpha(r,z)dzdr \\
&+\int_{t}^{t+h}\int_{\RR}(t+h-r)^{\alpha-1} p_{t+h-r}(x-z)J_\alpha(r,z)dzdr \\
=:&\,\mathcal{K}_1(t,h,x)+\mathcal{K}_2(t,h,x)+\mathcal{K}_3(t,h,x).
\end{split}
\end{equation}

In the following, we give estimates for $\mathcal{K}_i(t,h,x)$, $i=1,2,3$, respectively.
 By H\"older's inequality, we have
\begin{equation}\label{J-1-First}
\begin{split}
\mathcal{K}_1(t,h,x)=&\,\int_{0}^{t}\int_{\RR}(t+h-r)^{\alpha-1}\left( p_{t+h-r}(x-z)-p_{t-r}(x-z)\right)\lambda^{-\frac{1}{p}}(z)J_\alpha(r,z)\lambda^{\frac{1}{p}}(z)dzdr\\
\leq&\,\left(\int_{0}^{t}\int_{\RR}(t+h-r)^{q(\alpha-1)}\big| p_{t+h-r}(x-z)-p_{t-r}(x-z)\big|^q\lambda^{-\frac{q}{p}}(z)dzdr\right)^{\frac{1}{q}}\\
&\, \, \cdot\left(\int_{0}^{t}\int_{\RR}\big|J_\alpha(r,z)\big|^p\lambda(z)dzdr\right)^{\frac{1}{p}}\\
=:&\,\left|\mathcal{K}_{11}(t,h,x)\right|^{\frac{1}{q}}\cdot \left(\int^t_0\big\|J_\alpha(r,\cdot)\big\|
^p_{L^p_\lambda(\mathbb{R})}dr\right)^{\frac{1}{p}}.
\end{split}
\end{equation}
Applying the Cauchy-Schwarz inequality,
 (\ref{eq H product}), (\ref{eq product H e}) and \eqref{g-interal-bound}, we have
\begin{equation}\label{J-12-First}
\begin{split}
&\big\|J_\alpha(r,\cdot)\big\|^p_{L^p_\lambda(\mathbb{R})}\\
=&\,\int_{\RR}\bigg|\int^r_0(r-s)^{-\alpha}\cdot \langle p_{r-s}(z-\cdot)\sigma(s,\cdot,u^g_{\ep}(s,\cdot)),g(s,\cdot) \rangle _{\HH_\e}ds\bigg|^p\lambda(z)dz\\
\lesssim &\,\int_{\RR}\bigg|\int^r_0(r-s)^{-2\alpha}\cdot \left\|p_{r-s}(z-\cdot)\sigma(s,\cdot,u^g_{\ep}(s,\cdot))\right\|^2_{\HH}ds\bigg|^{\frac{p}{2}}
\lambda(z)dz\\
\simeq&\,\int_{\RR}\bigg|\int^r_0\int_{\RR^2}(r-s)^{-2\alpha}\cdot \Big(p_{r-s}(z-y-l)\sigma(s,y+l,u^g_{\ep}(s,y+l))\\
&\qquad\qquad\qquad\qquad\qquad-p_{r-s}(z-y)\sigma(s,y,u^g_{\ep}(s,y))\Big)^2
\cdot|l|^{2H-2}dydlds\bigg|^{\frac{p}{2}}\lambda(z)dz\\
\lesssim &\,\int_{\RR}[\mathcal{B}_1(r,z)+\mathcal{B}_2(r,z)+\mathcal{B}_3(r,z)]\lambda(z)dz,
\end{split}
\end{equation}
where
\begin{align*}
\mathcal{B}_1(r,z):=&\,\Bigg(\int^r_0\int_{\RR^2}(r-s)^{-2\alpha}\cdot p^2_{r-s}(z-y-l) \\
&\qquad \quad \cdot \big|\sigma(s,y+l,u^g_{\ep}(s,y+l)   -\sigma(s,y,u^g_{\ep}(s,y+l))\big|^2\cdot|l|^{2H-2}dydlds\Bigg)^{\frac{p}{2}};\\
\mathcal{B}_2(r,z):=&\,\Bigg(\int^r_0\int_{\RR^2}(r-s)^{-2\alpha}\cdot p^2_{r-s}(z-y-l)\\
&\qquad \quad \cdot \big|\sigma(s,y,u^g_{\ep}(s,y+l)   -\sigma(s,y,u^g_{\ep}(s,y))\big|^2\cdot|l|^{2H-2}dydlds\Bigg)^{\frac{p}{2}};\\
\mathcal{B}_3(r,z):=&\,\left(\int^r_0\int_{\RR^2}(r-s)^{-2\alpha}\cdot \big|D_{r-s}(z-y,l)\big|^2\cdot \big|\sigma(s,y,u^g_{\ep}(s,y))\big|^2\cdot |l|^{2H-2}dydlds\right)^{\frac{p}{2}}.
\end{align*}
By using the same technique as that in Step 3 of the proof for Lemma \ref{UniBExist}, we have
\begin{align}\label{D-1-bound}
\int_{\RR}\mathcal{B}_1(r,z)\lambda(z)dz
\lesssim &\,
\left(\int^r_0(r-s)^{-2\alpha-\frac{1}{2}}\cdot\left(1+\left\|u^g_{\ep}(s,\cdot)\right\|_{L^p_\lambda(\mathbb{R})}^2\right) ds\right)^{\frac{p}{2}};\\
  \label{D-2-bound}
\int_{\RR}\mathcal{B}_2(r,z)\lambda(z)dz
\lesssim&\,\left(\int^r_0(r-s)^{-2\alpha-\frac{1}{2}}\cdot \left(N^*_{\frac 12-H,\,p}u^g_{\ep}(s)\right)^2 ds\right)^{\frac{p}{2}};\\
  \label{D-3-bound}
\int_{\RR}\mathcal{B}_3(r,z)\lambda(z)dz
\lesssim &\,
\left(\int^r_0(r-s)^{-2\alpha+H-1}\cdot\left(1+\left\|u^g_{\ep}(s,\cdot)\right\|_{L^p_\lambda(\mathbb{R})}^2\right)ds\right)^{\frac{p}{2}}.
\end{align}
Putting \eqref{J-12-First}, \eqref{D-1-bound},  \eqref{D-2-bound} and \eqref{D-3-bound} together, we have
\begin{align}\label{{J-12-Second}}
\big\|J_\alpha(r,\cdot)\big\|^p_{L^p_\lambda(\mathbb{R})}
\lesssim& \,\left(\int^r_0(r-s)^{-2\alpha-\frac{1}{2}}+(r-s)^{-2\alpha+H-1}ds\right)^{\frac{p}{2}}\left(1+\|u^g_{\ep}\big\|^p_{\mathcal{Z}^p_{\lambda,T}}\right).
\end{align}
  {If $-2\alpha-\frac{1}{2}>-1$ and $-2\alpha+H-1>-1$, namely,} if $\alpha<\frac{H}{2}$, then we have
\begin{align}\label{J-12-Third}
\big\|J_\alpha(r,\cdot)\big\|^p_{L^p_\lambda(\mathbb{R})}
\lesssim\,1+\|u^g_{\ep}\big\|^p_{{Z}^p_{\lambda,T}}.
\end{align}
By     {\eqref{pqt-s12simeq}}, \eqref{6-155} and Lemma \ref{TechLemma1},  for any $\gamma\in(0,1)$, we have
\begin{align*}
\big|\mathcal{K}_{11}(t,h,x)\big|\lesssim &\,|h|^{q\gamma}\int_{0}^{t}\int_{\RR}(t-r)^{q(\alpha-1-\gamma)+\frac{1-q}{2}}p_{\frac{2(t+h-r)}{q\gamma}}(x-z)\lambda^{-\frac{q}{p}}(z)dzdr\notag\\
&+|h|^{q\gamma}\int_{0}^{t}\int_{\RR}(t-r)^{q(\alpha-1-\gamma)+\frac{1-q}{2}}p_{\frac{2(t-r)}{q\gamma}}(x-z)\lambda^{-\frac{q}{p}}(z)dzdr\notag\\
\lesssim &\,|h|^{q\gamma}\lambda^{-\frac{q}{p}}(x)\int_{0}^{t}(t-r)^{q(\alpha-1-\gamma)+\frac{1-q}{2}}dr.
\end{align*}
  {Hence, if $\alpha<\frac{H}{2}$ and $q(\alpha-1-\gamma)+\frac{1-q}{2}>-1$,  namely,} if   $0<\gamma<\frac{H}{2}-\frac{3}{2p}$, then we have
\begin{align}\label{J-11-First}
\big|\mathcal{K}_{11}(t,h,x)\big|^{\frac{1}{q}}\lesssim &\, |h|^{\gamma}\lambda^{-\frac{1}{p}}(x).
\end{align}
 If $0<\gamma<\frac{H}{2}-\frac{3}{2p}$, then  by putting \eqref{J-1-First},  \eqref{J-12-Third}   and \eqref{J-11-First} together, we have
\begin{align}\label{J-1-Third}
       \sup\limits_{\substack{t,\,t+h\in[0,T], \\ x\in \RR}}\lambda^{\frac{1}{p}}(x) \cdot  \big|\mathcal{K}_1(t,h,x)\big| \leq \, C_{T,p,H,N,\gamma}\cdot|h|^{\gamma}\cdot\left(1+\|u_\ep^g\|_{Z_{\lambda,T}^p}\right).
\end{align}
Applying H\"older's inequality, \eqref{pqt-s12simeq} and Lemma \ref{TechLemma1}, we have
\begin{equation}\label{J-2-First}
\begin{split}
\mathcal{K}_2(t,h,x)
\leq&\,\left(\int^t_0\int_{\RR}\big|(t+h-r)^{\alpha-1}-(t-r)^{\alpha-1}\big|^q\cdot p^q_{t-r}(x-z)\lambda^{-\frac{q}{p}}(z)dzdr\right)^{\frac{1}{q}}\\
&\,\, \,\,\cdot\left(\int^t_0\int_{\RR}\big|J_\alpha(r,z)\big|^p\lambda(z)dzdr\right)^{\frac{1}{p}}\\
\lesssim&\,\left(\int^t_0\big|(t+h-r)^{\alpha-1}-(t-r)^{\alpha-1}\big|^q(t-r)^{\frac{1-q}{2}}\lambda^{-\frac{q}{p}}(x)dr\right)^{\frac{1}{q}}\\
&\,\,\,\, \cdot\left(\int^T_0\big\|J_\alpha(r,\cdot)\big\|^p_{L^p_\lambda(\mathbb{R})}dr\right)^{\frac{1}{p}}.
\end{split}
\end{equation}
 By   \eqref{J-12-Third}  and \eqref{6-154}, for any  $\gamma\in(0,1)$, if $\alpha<\frac{H}{2}$, then we have
\begin{align*}
       &\sup\limits_{\substack{t,\,t+h\in[0,T], \\ x\in \RR}}\lambda^{\frac{1}{p}}(x) \cdot  \big|\mathcal{K}_2(t,h,x)\big|\\ \lesssim\, & |h|^{\gamma}\cdot\sup_{t\in[0,T]}\left(\int^t_0(t-r)^{q(\alpha-1-\gamma)+\frac{1-q}{2}}dr\right)^{\frac{1}{q}}\cdot \left(1+\|u_\ep^g\|_{Z_{\lambda,T}^p}\right).\notag
\end{align*}
  {If $\alpha<\frac{H}{2}$ and $q(\alpha-1-\gamma)+\frac{1-q}{2}>-1$, namely,} if   $0<\gamma<\frac{H}{2}-\frac{3}{2p}$, then there exists a positive constant $C_{T,p,H,N,\gamma}$ such that
\begin{align}\label{J-2-Third}
       \sup\limits_{\substack{t,\,t+h\in[0,T], \\ x\in \RR}}\lambda^{\frac{1}{p}}(x) \cdot  \big|\mathcal{K}_2(t,h,x)\big| \leq C_{T,p,H,N,\gamma} |h|^{\gamma}\cdot \left(1+\|u_\ep^g\|_{Z_{\lambda,T}^p}\right).
\end{align}
By H\"older's inequality and Lemma \ref{TechLemma1}, we have
\begin{equation}\label{J-3-First}
\begin{split}
\mathcal{K}_3(t,h,x)
\leq&\,\left(\int^{t+h}_t(t+h-r)^{q(\alpha-1)}(t+h-r)^{\frac{1-q}{2}}\lambda^{-\frac{q}{p}}(x)dr\right)^{\frac{1}{q}}\\
&\,\,\, \cdot\left(\int^T_0\big\|J_\alpha(r,\cdot)\big\|^p_{L^p_\lambda(\mathbb{R})}dr\right)^{\frac{1}{p}}\\
\simeq&\,\lambda^{-\frac{1}{p}}(x)\cdot |h|^{\alpha-\frac{3}{2p}}\cdot\left(\int^T_0\big\|J_\alpha(r,\cdot)\big\|^p_{L^p_\lambda(\mathbb{R})}dr\right)^{\frac{1}{p}}.
\end{split}
\end{equation}
  {Let $\gamma=\alpha-\frac{3}{2p}$. If $\frac{3}{2p}<\alpha<\frac{H}{2}$, namely, if  $0<\gamma<\frac{H}{2}-\frac{3}{2p}$, then \eqref{J-12-Third} yields that}
\begin{equation}\label{J-3-Second}
\begin{split}
       \sup\limits_{\substack{t,\,t+h\in[0,T], \\ x\in \RR}}\lambda^{\frac{1}{p}}(x)\cdot   \big|\mathcal{K}_3(t,h,x)\big|
        \leq C_{T,p,H,N,\gamma}|h|^{\gamma}\cdot\left(1+\|u_\ep^g\|_{Z_{\lambda,T}^p}\right).
\end{split}
\end{equation}
If $p>\frac{3}{H}$ and $0<\gamma<\frac{H}{2}-\frac{3}{2p}$, then by putting \eqref{3.405.7}, \eqref{J-1-Third}, \eqref{J-2-Third} and \eqref{J-3-Second} together, we have
\begin{align}\label{phi-estimate}
 \sup\limits_{\substack{t,\,t+h\in[0,T], \\ x\in \RR}}\lambda^{\frac{1}{p}}(x)\cdot   \big|\Phi(t+h,x)-\Phi(t,x)\big| \leq \, C_{T,p,H,N,\gamma} |h|^{\gamma}\cdot\left(1+\|u_\ep^g\|_{Z_{\lambda,T}^p}\right).
\end{align}

(iii). Notice that
\begin{align*}
&|\Phi(t,x)-\Phi(t,y)|\\
\simeq&\,\int_{0}^{t}\int_{\RR}(t-r)^{\alpha-1}\cdot \left(p_{t-r}(x-z)-p_{t-r}(y-z)\right)J_\alpha(r,z)dzdr\notag \\
\lesssim&\, \left(\int_{0}^{t}\int_{\RR}(t-r)^{q(\alpha-1)}\cdot \big|p_{t-r}(x-z)-p_{t-r}(y-z)\big|^q\lambda^{-\frac{q}{p}}(z)dzdr\right)^{\frac{1}{q}} \cdot\left(\int_{0}^{t}\int_{\RR}\big|J_\alpha(r,z)\big|^p\lambda(z)dzdr\right)^{\frac{1}{p}}\notag \\
=&\,\left(\int_{0}^{t}\int_{\RR}(t-r)^{q(\alpha-1)}\cdot \big|p_{t-r}(x-z)-p_{t-r}(y-z)\big|^q\lambda^{-\frac{q}{p}}(z)dzdr\right)^{\frac{1}{q}} \cdot\left(\int^t_0\big\|J_\alpha(r,\cdot)\big\|^p_{L^p_\lambda(\mathbb{R})}dr\right)^{\frac{1}{p}}.
 \end{align*}
For any $\gamma\in(0,1)$, by using the same method as that in the proof of    (4.35) in \cite{HW2019} and (\ref{J-12-Third}), we have
\begin{equation}\label{481-x-y}
\begin{split}
&\big|\Phi(t,x)-\Phi(t,y)\big|\\
\lesssim&\,\big|x-y\big|^\gamma\cdot\left[\lambda^{-\frac{1}{p}}(x)+\lambda^{-\frac{1}{p}}(y)\right]
\cdot\left(\int_{0}^{t}(t-r)^{(\alpha q-\frac{3q}{2}+\frac{1}{2})-\frac{\gamma q}{2}}dr\right)^{\frac{1}{q}} \cdot\left(1+\|u^g_{\ep}\big\|_{{Z}^p_{\lambda,T}}\right).
 \end{split}
 \end{equation}
   {If $\alpha<\frac{H}{2}$ and $(\alpha q-\frac{3q}{2}+\frac{1}{2})-\frac{\gamma q}{2}>-1$, namely,} if $p>\frac{3}{H}$ and $0<\gamma<H-\frac{3}{p}$, then we get the desired result \eqref{Lemma-4.3-iii}.

 The proof is complete.
 \end{proof}
  {
\begin{lemma}\label{lemma3.3limits} Assume that     $u_{n}\in Z^p_{\lambda,T}$, $n\geq 1$,  for some $p\ge2$. If  $u_n\rightarrow u$ in $(\cC([0, T]\times\RR),d_{\cC})$ as $n\rightarrow \infty$, then we have the following results:
\begin{itemize}
     \item[(i).]     $u$ is also in $Z^p_{\lambda,T}$;
  \item[(ii).] for any fixed $t\in [0,T]$,
  \begin{align}\label{eq H1 finite}
  \int^t_0\int_{\mathbb R}\|p_{t-s}(x-\cdot)\sigma(s,\cdot,u(s,\cdot))\|^2_{\cH} \lambda(x) dsdx<\infty.
  \end{align}
  Hence, for almost all  $x\in \mathbb R$,
    \begin{align}\label{eq H2 finite}
     \int^t_0\|p_{t-s}(x-\cdot)\sigma(s,\cdot,u(s,\cdot))\|^2_{\cH}  ds<\infty.
      \end{align}
  \end{itemize}
\end{lemma}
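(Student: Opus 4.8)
The plan is to obtain $(i)$ from a lower-semicontinuity argument based on Fatou's lemma, and then to deduce $(ii)$ by specializing the a priori estimates of Step~2 in the proof of Lemma~\ref{UniBExist} to the limit function $u$.

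First I note that convergence in the metric $d_{\cC}$ of \eqref{e.4.metric} is nothing but locally uniform convergence, so that $u_n(t,x)\to u(t,x)$ for every fixed $(t,x)\in[0,T]\times\RR$. For $(i)$, fix $t\in[0,T]$. Since $|u_n(t,x)|^p\to|u(t,x)|^p$ pointwise in $x$, Fatou's lemma yields
\[
\int_{\RR}|u(t,x)|^p\lambda(x)\,dx\le\liminf_{n\to\infty}\int_{\RR}|u_n(t,x)|^p\lambda(x)\,dx\le\liminf_{n\to\infty}\|u_n\|_{Z^p_{\lambda,T}}^p .
\]
In the same way, for fixed $t$ and $h$ the pointwise convergence of $u_n(t,\cdot)-u_n(t,\cdot+h)$ and Fatou's lemma give $\|u(t,\cdot)-u(t,\cdot+h)\|_{L^p_\lambda(\RR)}\le\liminf_n\|u_n(t,\cdot)-u_n(t,\cdot+h)\|_{L^p_\lambda(\RR)}$, and a further application of Fatou's lemma in the variable $h$ against the measure $|h|^{2H-2}\,dh$ produces $\cN^*_{\frac12-H,\,p}u(t)\le\liminf_n\cN^*_{\frac12-H,\,p}u_n(t)$. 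Taking the supremum over $t\in[0,T]$ gives $\|u\|_{Z^p_{\lambda,T}}\le\liminf_{n\to\infty}\|u_n\|_{Z^p_{\lambda,T}}$, which is finite because $\{u_n\}$ is bounded in $Z^p_{\lambda,T}$; hence $u\in Z^p_{\lambda,T}$, proving $(i)$.

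For $(ii)$ I would first reduce to the exponent $2$: since $\lambda(x)\,dx$ is a probability measure and $p\ge2$, Jensen's inequality gives $\|u\|_{Z^2_{\lambda,T}}\le\|u\|_{Z^p_{\lambda,T}}<\infty$, so that $\sup_{s\in[0,T]}\|u(s,\cdot)\|_{L^2_\lambda(\RR)}$ and $\sup_{s\in[0,T]}\cN^*_{\frac12-H,\,2}u(s)$ are both finite. I then expand the $\cH$-norm by means of \eqref{eq H product},
\[
\|p_{t-s}(x-\cdot)\sigma(s,\cdot,u(s,\cdot))\|_{\cH}^2=c_{2,H}\int_{\RR^2}\big|p_{t-s}(x-y-h)\sigma(s,y+h,u(s,y+h))-p_{t-s}(x-y)\sigma(s,y,u(s,y))\big|^2|h|^{2H-2}\,dh\,dy ,
\]
and decompose the bracket into the three differences already used in Step~2: the increment of $\sigma$ in its spatial slot, the increment of $\sigma$ through $u$, and the heat-kernel increment $D_{t-s}(x-y,h)$ multiplied by $\sigma$. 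Integrating against $\lambda(x)\,dx$ and invoking \eqref{lineargrowthuniformlycond}--\eqref{DuxSigam}, the change of variables, Minkowski's and Jensen's inequalities together with Lemma~\ref{TechLemma1} and Lemma~\ref{TechLemma5}, exactly as in \eqref{D1Bdd}, \eqref{D1Bdd2} and \eqref{D1Bdd3} with $p=2$, I obtain
\[
\int_{\RR}\|p_{t-s}(x-\cdot)\sigma(s,\cdot,u(s,\cdot))\|_{\cH}^2\lambda(x)\,dx\lesssim\big((t-s)^{-\frac12}+(t-s)^{H-1}\big)\big(1+\|u(s,\cdot)\|_{L^2_\lambda(\RR)}^2\big)+(t-s)^{-\frac12}\big[\cN^*_{\frac12-H,\,2}u(s)\big]^2 .
\]

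Finally I integrate this estimate in $s$ over $[0,t]$. As $H\in(\frac14,\frac12)$, the exponents satisfy $-\tfrac12>-1$ and $H-1>-1$, so the time singularities are integrable and the right-hand side is bounded by $C\,(1+\|u\|_{Z^2_{\lambda,T}}^2)<\infty$; this is precisely \eqref{eq H1 finite}. The assertion \eqref{eq H2 finite} then follows from Tonelli's theorem: since the nonnegative integrand has finite double integral and $\lambda(x)>0$ for every $x$, the inner integral $\int_0^t\|p_{t-s}(x-\cdot)\sigma(s,\cdot,u(s,\cdot))\|_{\cH}^2\,ds$ is finite for Lebesgue-almost every $x\in\RR$. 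I expect the only real work to lie in $(ii)$, namely in transferring the delicate diagonal and weighted estimates of Lemma~\ref{UniBExist} to the fixed limit $u$; conceptually, however, nothing new beyond the $p=2$ specialization of Step~2 is required, so this is effort rather than a genuine obstacle.
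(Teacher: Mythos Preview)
Your proof follows the paper's route: for $(i)$ the paper defers to \cite[Lemma~4.6]{HW2019}, whose argument is precisely the Fatou-based lower-semicontinuity you wrote out (cf.\ also Step~4 of Lemma~\ref{UniBExist}), and for $(ii)$ your three-term decomposition together with the weighted heat-kernel estimates from Lemmas~\ref{TechLemma1} and~\ref{TechLemma5} coincides with the paper's $F_1,F_2,F_3$ bounds in \eqref{eq F1}--\eqref{eq F3}. One caveat worth flagging: in $(i)$ you invoke that $\{u_n\}$ is \emph{bounded} in $Z^p_{\lambda,T}$, which is not part of the stated hypothesis (and without it the conclusion can fail, e.g.\ $u_n(t,x)=\min(|x|,n)$); this uniform bound is implicit in the lemma and is always supplied in its applications in the paper.
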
}
\begin{proof}
  {(i). This result  is due to  \cite[Lemma 4.6]{HW2019}  by replacing $\|\cdot\|_{L^p_{\lambda}(\Omega)\times\RR}$ by $\|\cdot\|_{L^p_{\lambda}(\RR)}$, the proof is omitted here.}

  {(ii). }Recall  $D_t(x,h)$ given by  \eqref{TechDt}.
By \eqref{eq H product} and a change of variable, we have
\begin{equation}\label{p_tsxcdottriangleg_n_mathcalH}
\begin{split}
&\|p_{t-s}(x-\cdot)\sigma(s,\cdot,u(s,\cdot))\|^2_{\mathcal{H}}\\
=&\,c_{2, H}\int_{\RR^2}\bigg[p_{t-s}(x-y-h)\sigma(s,y+h,u(s,y+h))
-p_{t-s}(x-y)\sigma(s,y,u(s,y))\bigg]^2\cdot|h|^{2H-2}dhdy\\
\lesssim&\,\int_{\RR^2}p^2_{t-s}(x-y)\left|\sigma(s,y,u(s,y+h))-\sigma(s,y,u(s,y))\right|^2\cdot|h|^{2H-2}dhdy\\
 &+\int_{\RR^2}\left|D_{t-s}(x-y,h)\right|^2\left|\sigma(s,y+h,u(s,y))\right|^2\cdot|h|^{2H-2}dhdy\\
 &+\int_{\RR^2}p^2_{t-s}(x-y)\left|\sigma(s,y+h,u(s,y))-\sigma(s,y,u(s,y))\right|^2\cdot|h|^{2H-2}dhdy\\
  =:&\,F_1(t,s,x)+F_2(t,s,x)+F_3(t,s,x).
\end{split}
\end{equation}
By \eqref{Lipschitzianuniformlycondu}, \eqref{scale}, a change of variable and  Lemma \ref{TechLemma1}, we have that, for any fixed $t\in [0,T]$,
\begin{equation}\label{eq F1}
    \begin{split}
&\int^t_0\int_{\RR}F_1(t,s,x)\lambda(x)dxds\\
\lesssim&\,\int^t_0\int_{\RR^3}p^2_{t-s}(x-y)\cdot\big|u(s, y+h)-u(s, y)\big|^2\cdot|h|^{2H-2}\lambda(x)dhdydxds\\
\simeq&\,\int^t_0\int_{\RR^2}(t-s)^{-\frac{1}{2}}\cdot\left(\int_{\RR}p_{\frac{t-s}{2}}(y-x)\lambda(x)dx\right)\\
&\qquad\quad \cdot\big|u(s, y+h)-u(s, y)\big|^2\cdot|h|^{2H-2}dhdyds\\
\lesssim&\,\int^t_0\int_{\RR^2}(t-s)^{-\frac{1}{2}}\lambda(y)\big|u(s, y+h)-u(s, y)\big|^2\cdot|h|^{2H-2}dhdyds\\
\lesssim&\,\int^t_0(t-s)^{-\frac{1}{2}}ds\cdot\sup_{s\in[0,T]}\left[\mathcal{N}^*_{\frac{1}{2}-H,2}u(s)\right]^2
<\infty.
\end{split}
\end{equation}

By \eqref{lineargrowthuniformlycond}, 
a change of variable and Lemma \ref{TechLemma2}, we have  that, for any fixed $t\in [0,T]$,
\begin{align}\label{eq F2}
&\int^t_0\int_{\RR}F_2(t,s,x)\lambda(x)dxds\notag\\
\lesssim&\,\int^t_0\int_{\RR^3}|D_{t-s}(x-y,h)|^2\cdot\left(1+\big|u(s, y)\big|^2\right)\cdot|h|^{2H-2}\lambda(x)dhdydxds\notag\\
=&\,\int^t_0\int_{\RR}\left(\int_{\RR^2}|D_{t-s}(y,h)|^2\cdot|h|^{2H-2}\lambda(x-y)dhdy\right)\cdot\left(1+\big|u(s, x)\big|^2\right) dxds\\
\lesssim&\,  \int^t_0\int_{\RR}(t-s)^{H-1}\left(1+\big|u(s, x)\big|^2\right)\lambda(x)dxds\notag\\
\leq&\,\int^t_0(t-s)^{H-1}ds\cdot \sup_{s\in[0,T]}\left(1+\|u(s,\cdot)\|^2_{L^2_\lambda(\RR)}\right)<\infty.\notag
\end{align}

By \eqref{DuSigamy}, \eqref{DuxSigamy}, \eqref{scale},  
a change of variable and  Lemma \ref{TechLemma1}, we have  that, for any fixed $t\in [0,T]$,
\begin{equation}\label{eq F3}
\begin{split}
&\int^t_0\int_{\RR}F_3(t,s,x)\lambda(x)dxds\\
\lesssim&\,\int_0^t\int_{\RR^3}p^2_{t-s}(x-y)\left(1+\left|u(s,y)\right|^2\right)\cdot\left(\textbf{1}_{\{|h|\leq1\}}|h|^2+\textbf{1}_{\{|h|>1\}}\right)|h|^{2H-2}\lambda(x)dhdydxds\\
\lesssim&\,\int_0^t\int_{\RR^2}p^2_{t-s}(x-y)\left(1+\left|u(s,y)\right|^2\right)\lambda(x)dydxds \\
\simeq&\,\int_0^t\int_{\RR}(t-s)^{-\frac{1}{2}}\cdot\left(\int_{\RR}p_{\frac{t-s}{2}}(y-x)\lambda(x)dx\right)\cdot\left(1+\left|u(s,y)\right|^2\right)dyds\\
 \lesssim&\,\int_0^t\int_{\RR}(t-s)^{-\frac{1}{2}}\lambda(y)\left(1+\left|u(s,y)\right|^2\right)dyds\\
 \leq&\,\int^t_0(t-s)^{-\frac{1}{2}}ds\cdot \sup_{s\in[0,T]}\left(1+\left\|u(s,\cdot)\right\|^2_{L^2_\lambda(\RR)}\right)<\infty.
 \end{split}
 \end{equation}
Putting  \eqref{p_tsxcdottriangleg_n_mathcalH}-\eqref{eq F3} together, we obtain \eqref{eq H1 finite}. In particular,
 \eqref{eq H2 finite} holds  for almost all $x\in \mathbb R$.

 The proof is complete.
\end{proof}

 \begin{proof}[Proof of Proposition \ref{thm solu skeleton}] ({\bf Existence})
  {By Lemma \ref{TimeSpaceRegBdd} (ii) and (iii),
we have that for any   $R>0$ and $\gamma>0$, there exists $\theta>0$ such that
\begin{align*}
       \max\limits_{\substack{|t-s|+|x-y|\leq \theta, \\ 0\leq t,s \leq T;\,    -R\le x,y \le R }}|u^{g}_{\e}(t,x)-u^{g}_{\e}(s,y)|  \leq \gamma.
     \end{align*}
Combining this with the fact of $u^{g}_{\e}(0,\cdot)\equiv1$, we know that the family $\{u^{g}_{\e}\}_{\e>0}$ is relatively compact on the space $({\mathcal C}([0,T]\times\RR), d_{\mathcal C})$ by the Arzel\`a-Ascoli theorem.}
Thus, there is a subsequence $\ep_n\downarrow 0$ such that    $u^g_{\ep_n}$ converges to a function $ u^g$ in $(\cC([0, T]\times\RR),d_{\cC})$. 

By using the Cauchy-Schwarz inequality, \eqref{Lipschitzianuniformlycondu},    {Lemma \ref{lemma3.3limits} (ii)} and the dominated convergence theorem, we have
  \begin{align*}
  u^g_{\ep_n}(t,x)=&\,1+\int_0^t\langle p_{t-s}(x-\cdot)\left(\sigma(s,\cdot,u^g_{\ep_n}(s,\cdot))-\sigma(s,\cdot, u^g(s,\cdot))\right),g(s,\cdot)\rangle_{\cH_{\ep_n}} ds\notag\\
 &+\int_0^t\langle p_{t-s}(x-\cdot)\sigma(s,\cdot,u^g(s,\cdot)),g(s,\cdot)\rangle_{\cH_{\ep_n}} ds\notag\\
  &\,\longrightarrow  1+\int_0^t\langle p_{t-s}(x-\cdot)\sigma(s,\cdot, u^g(s,\cdot)),g(s,\cdot)\rangle_{\cH} ds\ \ \ \text{as } \ep_n\downarrow 0.
  \end{align*}
 By the uniqueness of the limit of  $\{u^g_{\ep_n}\}_{n\ge1}$, we know that $u^g$ satisfies Eq.\,\eqref{eq skeleton}.
  {By Proposition  \ref{thm solu skeleton} and Lemma \ref{lemma3.3limits} (i),
 we have  that, for any $p\ge 2$,
\begin{align}\label{ugZlamdanormal}
\|u^g\|_{Z_{\lambda,T}^p}<\infty.
\end{align}}

{(\bf Uniqueness)}
Let $u^g$ and $v^g$ be two solutions of \eqref{eq skeleton}.
By using the same technique as that  in the proof of Lemma \ref{TimeSpaceRegBdd} (i),
we have that, for any   $p>\frac{6}{4H-1}$,
\begin{equation}\label{Nug}
      \sup\limits_{t\in[0,T],\, x\in \RR}\lambda^{\frac{1}{p}}(x)\cdot  \cN_{\frac 12-H}u^g(t,x) <\infty;
\end{equation}
\begin{equation}\label{Nvg}
      \sup\limits_{t\in[0,T],\, x\in \RR}\lambda^{\frac{1}{p}}(x) \cdot \cN_{\frac 12-H}v^g(t,x) <\infty.
\end{equation}
  Denote that
  \begin{align*}
    {S_1(t)=\,\int_{\mathbb{R}}|u^g(t,x)-v^g(t,x)|^2\lambda(x)dx},
  \end{align*}
and
   \begin{align*}
    {S_2(t)=\,\int_{\RR^2}\left|u^g(t,x)-v^g(t,x)-u^g(t,x+h)+v^g(t,x+h)\right|^2\cdot|h|^{2H-2}\lambda(x)dhdx.}
  \end{align*}
    {According to  \eqref{LPlambdaRNorm}, \eqref{N*pNorm}  and \eqref{ugZlamdanormal}, we know that
  \begin{align}\label{eq S12}
  \displaystyle\sup_{t\in[0,T]}S_1(t)<\infty, \,\,\, \displaystyle\sup_{t\in[0,T]}S_2(t)<\infty.
  \end{align}}
  Recall $D_{t}(x,h)$ defined by \eqref{TechDt} and denote that $$\triangle(t,x,y):=\sigma(t,x,u^g(t,y))-\sigma(t,x,v^g(t,y)).$$
 Since $\int_0^T \|g(s,\cdot)\|_{\HH}^2ds<\infty$, by using the Cauchy-Schwarz inequality, \eqref{eq H product}  and a change of variable, we have that, for any $t\in [0,T]$,
   { \begin{equation}\label{4.84}
 \begin{split}
  &\int_{\RR}|u^g(t,x)-v^g(t,x)|^2\lambda(x)dx\\
  =&\,\int_{\RR}\left|\int_0^t\langle p_{t-s}(x-\cdot)\triangle(s,\cdot,\cdot), g(s,\cdot)\rangle_{\mathcal H}ds\right|^2\lambda(x)dx \\
  \lesssim&\,\int_{\RR}\int_0^t\|g(s,\cdot)\|^2_{\mathcal H}ds\cdot
\int_0^t\|p_{t-s}(x-\cdot)\triangle(s,\cdot,\cdot)\|^2_{\mathcal H}ds\lambda(x)dx\\
  \lesssim&\,\int_0^t\int_{\RR^3}\big[p_{t-s}(x-y-h)\triangle(s,y+h,y+h)-p_{t-s}(x-y)\Delta(s,y,y)\big]^2\\
  &\,\,\,\,\,\,\, \cdot|h|^{2H-2}\lambda(x)dhdydxds\\
  \lesssim&\,\int_0^t\int_{\RR^3}p^2_{t-s}(x-y)\left|\triangle(s,y,y+h)-\triangle(s,y,y)\right|^2\cdot|h|^{2H-2}\lambda(x)dhdydxds\\
  &+\int_0^t\int_{\RR^3}\left|D_{t-s}(x-y,h)\right|^2\left|\triangle(s,y+h,y)\right|^2\cdot|h|^{2H-2}\lambda(x)dhdydxds\\
  &+\int_0^t\int_{\RR^3}p^2_{t-s}(x-y)\left|\triangle(s,y+h,y)-\triangle(s,y,y)\right|^2\cdot|h|^{2H-2}\lambda(x)dhdydxds\\
=:&\,V_1(t)+V_2(t)+V_3(t).
  \end{split}
  \end{equation}}
    {By the integral mean value theorem}, we have
  \begin{align*}
 &\left|\triangle(s,y,y+h)-\triangle(s,y,y)\right|^2\\
 =&\,\Big|\int_0^1\left[u^g(s,y+h)-v^g(s,y+h)\right]\sigma'_\xi\left(s,y,\theta u^g(s,y+h)+(1-\theta) v^g\right)d\theta\\
 &-\int_0^1\left[u^g(s,y)-v^g(s,y)\right]\sigma'_\xi\left(s,y,\theta u^g(s,y)+(1-\theta) v^g\right)d\theta\Big|^2.
 \end{align*}
 By  \eqref{DuSigam} and \eqref{DuSigamAdd}, we have
 \begin{align}\label{490-490}
 &\left|\triangle(s,y,y+h)-\triangle(s,y,y)\right|^2\notag\\
 \lesssim&\,\left|u^g(s,y+h)-v^g(s,y+h)-u^g(s,y)+v^g(s,y)\right|^2\\
 &+\lambda^{\frac{2}{p_0}}(y)|u^g(s,y)-v^g(s,y)|^2\cdot\left[|u^g(s,y+h)-u^g(s,y)|^2+|v^g(s,y+h)-v^g(s,y)|^2\right].\notag
 \end{align}
 where    {$p_0\in (\frac{6}{4H-1},\infty)$} is the constant  appeared in \eqref{DuSigamAdd}.

 By \eqref{scale} and Lemma \ref{TechLemma1}, we have
 \begin{align}\label{V1_firstterm}
 &\int_0^t\int_{\RR^3}p^2_{t-s}(x-y)\left|u^g(s,y+h)-v^g(s,y+h)-u^g(s,y)+v^g(s,y)\right|^2\cdot|h|^{2H-2}\lambda(x)dhdydxds\notag\\
 \simeq&\,\int_0^t(t-s)^{-\frac{1}{2}}\int_{\RR^2}\cdot\left(\int_{\RR}p_{\frac{t-s}{2}(y-x)}\lambda(x)dx\right)\notag\\
 &\qquad\qquad\qquad\cdot\left|u^g(s,y+h)-v^g(s,y+h)-u^g(s,y)+v^g(s,y)\right|^2\cdot|h|^{2H-2}dhdyds\\
 \lesssim&\,\int_0^t(t-s)^{-\frac{1}{2}}\int_{\RR^2}\lambda(y)\left|u^g(s,y+h)-v^g(s,y+h)-u^g(s,y)+v^g(s,y)\right|^2\cdot|h|^{2H-2}dhdyds\notag\\
 =&\,\int_0^t(t-s)^{-\frac{1}{2}}\cdot S_2(s)ds.\notag
 \end{align}
 By \eqref{scale}, \eqref{Nug}, \eqref{Nvg} and Lemma \ref{TechLemma1}, we have
 \begin{equation}\label{V1_secondterm}
 \begin{split}
 &\int_0^t\int_{\RR^3}p^2_{t-s}(x-y)\lambda^{\frac{2}{p_0}}(y)|u^g(s,y)-v^g(s,y)|^2\\
 &\qquad\cdot
 \left[|u^g(s,y+h)-u^g(s,y)|^2+|v^g(s,y+h)-v^g(s,y)|^2\right]\cdot|h|^{2H-2}\lambda(x)dhdydxds\\
 \lesssim&\,\int_0^t\int_{\RR^2}p^2_{t-s}(x-y)|u^g(s,y)-v^g(s,y)|^2\lambda(x)dydxds\\
 \lesssim&\,\int_0^t(t-s)^{-\frac{1}{2}}\int_{\RR}\cdot\left(\int_\RR p_{\frac{t-s}{2}}(y-x)\lambda(x)dx\right)|u^g(s,y)-v^g(s,y)|^2dyds\\
 \lesssim&\,\int_0^t(t-s)^{-\frac{1}{2}}\int_{\RR}\lambda(y)|u^g(s,y)-v^g(s,y)|^2dyds\\
  =&\,\int_0^t(t-s)^{-\frac{1}{2}}\cdot S_1(s)ds.
 \end{split}
 \end{equation}
 Combining \eqref{490-490},  \eqref{V1_firstterm} with \eqref{V1_secondterm}, we obtain that \
 \begin{align}\label{v1label}
  V_1(t)\lesssim\int_0^t(t-s)^{-\frac{1}{2}}\cdot \left(S_1(s)+S_2(s)\right)ds.
 \end{align}
 A change of variable, Lemma \ref{TechLemma5} and \eqref{Lipschitzianuniformlycondu} yield that
 \begin{equation}\label{v2label}
 \begin{split}
 V_2(t)\lesssim&\,\int_0^t\int_{\RR^3}|D_{t-s}(x-y,h)|^2\cdot|u^g(s,y)-v^g(s,y)|^2\cdot|h|^{2H-2}\lambda(x)dhdydxds\\
 =&\,\int_0^t\int_{\RR}\cdot\left(\int_{\RR^2}|D_{t-s}(x,h)|^2\lambda(y-x)dx\right)\cdot|u^g(s,y)-v^g(s,y)|^2\cdot|h|^{2H-2}dhdyds\\
 \lesssim&\,\int_0^t(t-s)^{H-1}\int_{\RR}\lambda(y)|u^g(s,y)-v^g(s,y)|^2dyds\\
 =&\,\int_0^t(t-s)^{H-1}\cdot S_1(s)ds.
   \end{split}
  \end{equation}
If  $h>1$, then we have that by   \eqref{DuSigam},
 \begin{equation}\label{h1dayu}
 \begin{split}
 \left(\triangle(s,y+h,y)-\triangle(s,y,y)\right)^2
 =&\,\left|\int^{v^g}_{u^g}\left(\sigma'_\xi(s,y+h,\xi)-\sigma'_\xi(s,y,\xi)\right)d\xi\right|^2\\
 \lesssim&\, \left|u^g(s,y)-v^g(s,y)\right|^2.
  \end{split}
  \end{equation}
 If  $h\leq1$, then we have that by   \eqref{DuxSigam},
 \begin{equation}\label{h1xiaoyu}
 \begin{split}
 \left(\triangle(s,y+h,y)-\triangle(s,y,y)\right)^2
 =&\,\left|\int^{v^g}_{u^g}\left(\sigma'_\xi(s,y+h,\xi)-\sigma'_\xi(s,y,\xi)\right)d\xi\right|^2\\
 \lesssim&\,\left|u^g(s,y)-v^g(s,y)\right|^2\cdot \left|h\right|^2.
   \end{split}
  \end{equation}
 Thus, by  \eqref{pqt-s12simeq}, \eqref{h1dayu}, \eqref{h1xiaoyu} and Lemma \ref{TechLemma1}, we have
 \begin{equation}\label{v3label}
 \begin{split}
 V_3(t)=&\,\int_0^t\int_{\RR^2}\int_{|h|>1}p^2_{t-s}(x-y)\cdot |u^g(s,y)-v^g(s,y)|^2\cdot |h|^{2H-2}\lambda(x)dhdydxds\\
 &+\int_0^t\int_{\RR^2}\int_{|h|\leq1}p^2_{t-s}(x-y-h)\cdot |u^g(s,y)-v^g(s,y)|^2\cdot |h|^{2H}\lambda(x)dhdydxds\\
 \lesssim&\,\int_0^t\int_{\RR^2}p^2_{t-s}(x-y)\cdot |u^g(s,y)-v^g(s,y)|^2\lambda(x)dydxds\\
 \simeq&\,\int_0^t(t-s)^{\frac{1}{2}}\int_{\RR}\cdot\left(\int_{\RR}p_{\frac{t-s}{2}}(y-x)
 \lambda(x)dx\right)\cdot |u^g(s,y)-v^g(s,y)|^2dyds\\
  \lesssim&\,\int_0^t(t-s)^{\frac{1}{2}}\int_{\RR}\lambda(y) |u^g(s,y)-v^g(s,y)|^2dyds\\
 =&\,\int_0^t(t-s)^{-\frac{1}{2}}\cdot S_1(s)ds.
 \end{split}
  \end{equation}

 By \eqref{4.84}, \eqref{v1label}, \eqref{v2label} and \eqref{v3label}, we have
 \begin{align}\label{s1label}
 S_1(t)\lesssim\, \int^t_0(t-s)^{H-1}\cdot [S_1(s)+S_2(s)]ds.
 \end{align}
  {Using the  similar procedure as in the proof of \eqref{s1label}, we  obtain that  }
 \begin{align}\label{s2label}
 S_2(t)\lesssim \,\int^t_0(t-s)^{2H-\frac{3}{2}}\cdot [S_1(s)+S_2(s)]ds.
 \end{align}
Therefore, by \eqref{s1label} and \eqref{s2label}, we have
\begin{align*}
 S_1(t)+ S_2(t)\lesssim\, \int^t_0\left((t-s)^{H-1}+(t-s)^{2H-\frac{3}{2}}\right)\cdot[S_1(s)+S_2(s)]ds.
 \end{align*}
  {Notice that $S_1(t)$ and $S_2(t)$ are uniformly bounded on $[0,T]$ as in \eqref{eq S12}. By the fractional Gr\"onwall lemma (\cite[Lemma 7.1.1]{Herry1981}), we have $$S_1(t)+S_2(t)=0 \ \ \ \text{for all } t\in[0,T].$$
 This, together with the continuities of $u^g$ and $v^g$,  implies that  $u^g=v^g$.}

 The proof is complete.
 \end{proof}

\section{Verification of Condition \ref{cond1} $(a)$}
We now verify Condition \ref{cond1} $(a)$. Recall that $\Gamma^{0}\left(\int_0^\cdot g(s)ds\right)=u^g$ for $g\in \mathbb{S}$, where $u^g$ is the solution of Eq.\,\eqref{eq skeleton}.
\begin{proposition}\label{thm continuity skeleton}
Assume  that   $\sigma$ satisfies the   hypothesis $(\mathbf{H})$.
For any   {$N\geq1$}, let $g_n, g\in S^N$ be such that $g_n\rightarrow g$ weakly as $n\rightarrow \infty$. Let $u^{g_n}$ denote the solution to Eq.\,\eqref{eq skeleton} replacing $g$ by $g_n$. Then, as $n\rightarrow\infty$,
$$ u^{g_n}\longrightarrow  u^{g} \ \ \text{
in } \mathcal{C}([0, T]\times\RR).
$$
\end{proposition}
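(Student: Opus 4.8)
The plan is to combine an Arzel\`a--Ascoli compactness argument with the uniqueness assertion of Proposition~\ref{thm solu skeleton}. First I would fix some $p>\frac{3}{H}$ and observe that, since $\{g_n\}\subset S^N$, Proposition~\ref{thm solu skeleton} supplies a uniform bound $\sup_{n}\|u^{g_n}\|_{Z_{\lambda,T}^p}\le C_N<\infty$. Feeding this into the time- and space-increment estimates of Lemma~\ref{TimeSpaceRegBdd}(ii)--(iii) --- which the limiting solutions $u^{g_n}$ inherit from their approximations exactly as in the existence part of Proposition~\ref{thm solu skeleton}, and whose constants depend on $g_n$ only through $\|u^{g_n}\|_{Z_{\lambda,T}^p}$ --- gives a H\"older modulus of continuity in $(t,x)$ for $u^{g_n}$ that is uniform in $n$ on every compact rectangle $[0,T]\times[-R,R]$ (on which $\lambda^{-1/p}$ is bounded). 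Since also $u^{g_n}(0,\cdot)\equiv1$, the family $\{u^{g_n}\}_n$ is equibounded and equicontinuous on each such rectangle, hence relatively compact in $(\cC([0,T]\times\RR),d_{\cC})$ by the Arzel\`a--Ascoli theorem. It therefore suffices to show that every subsequential limit equals $u^g$.

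Let $u^{g_{n_k}}\to v$ in $\cC([0,T]\times\RR)$. By Lemma~\ref{lemma3.3limits}(i) we have $v\in Z_{\lambda,T}^p$, and by Lemma~\ref{lemma3.3limits}(ii) the map $s\mapsto \1_{[0,t]}(s)\,p_{t-s}(x-\cdot)\sigma(s,\cdot,v(s,\cdot))$ belongs to $L^2([0,T];\cH)$ for a.e. $x$. I would then pass to the limit in
\begin{equation*}
u^{g_{n_k}}(t,x)=1+\int_0^t\big\langle p_{t-s}(x-\cdot)\sigma(s,\cdot,u^{g_{n_k}}(s,\cdot)),\,g_{n_k}(s,\cdot)\big\rangle_{\cH}\,ds
\end{equation*}
after splitting the inner product as
\begin{equation*}
\big\langle p_{t-s}(x-\cdot)\big[\sigma(s,\cdot,u^{g_{n_k}}(s,\cdot))-\sigma(s,\cdot,v(s,\cdot))\big],g_{n_k}(s,\cdot)\big\rangle_{\cH}+\big\langle p_{t-s}(x-\cdot)\sigma(s,\cdot,v(s,\cdot)),g_{n_k}(s,\cdot)\big\rangle_{\cH}.
\end{equation*}
For the second, linear term, the weak convergence of $g_{n_k}$ to $g$ in $L^2([0,T];\cH)$ together with the $L^2([0,T];\cH)$-membership just noted gives, for a.e. $x$, convergence to $\int_0^t\langle p_{t-s}(x-\cdot)\sigma(s,\cdot,v(s,\cdot)),g(s,\cdot)\rangle_{\cH}\,ds$.

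The first term is the main obstacle, and it is where the weight does the work. By the Cauchy--Schwarz inequality in $\cH$ and $\int_0^T\|g_{n_k}(s)\|_{\cH}^2ds\le 2N$, it is enough to prove that $\int_0^t\|p_{t-s}(x-\cdot)[\sigma(s,\cdot,u^{g_{n_k}}(s,\cdot))-\sigma(s,\cdot,v(s,\cdot))]\|_{\cH}^2ds\to0$ for a.e. $x$. Expanding this $\cH$-norm and estimating it with \eqref{Lipschitzianuniformlycondu}, \eqref{DuSigam} and \eqref{DuxSigam} exactly as in the $V_1,V_2,V_3$ decomposition of the uniqueness proof of Proposition~\ref{thm solu skeleton}, one bounds its $\lambda$-average over $x$ by $\int_0^t(t-s)^{H-1}[\widetilde S_1(s)+\widetilde S_2(s)]\,ds$, where $\widetilde S_1(s)=\|u^{g_{n_k}}(s,\cdot)-v(s,\cdot)\|_{L^2_\lambda(\RR)}^2$ and $\widetilde S_2(s)$ is the corresponding weighted second-order increment quantity. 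The delicate point is that $u^{g_{n_k}}\to v$ only uniformly on compacts while the $\cH$-norm integrates over all of $\RR$; however, since $\lambda\,dx$ is a probability measure and the uniform $Z_{\lambda,T}^p$-bounds with $p>2$ make the integrands uniformly integrable, a Vitali/dominated convergence argument yields $\widetilde S_1(s),\widetilde S_2(s)\to0$ and hence, after a further dominated convergence in $s$, the whole integral tends to $0$. This gives $L^1(\lambda\,dx)$-convergence of the first term to $0$, so it vanishes for a.e. $x$ along a further subsequence. Passing to the limit then shows that $v(t,x)=1+\int_0^t\langle p_{t-s}(x-\cdot)\sigma(s,\cdot,v(s,\cdot)),g(s,\cdot)\rangle_{\cH}\,ds$ for a.e. $(t,x)$, and by continuity for all $(t,x)$; hence $v=u^g$ by the uniqueness part of Proposition~\ref{thm solu skeleton}. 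As every subsequential limit is $u^g$ and $\{u^{g_n}\}$ is relatively compact, the full sequence converges, i.e. $u^{g_n}\to u^g$ in $\cC([0,T]\times\RR)$.
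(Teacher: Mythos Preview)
Your Step~1 (Arzel\`a--Ascoli compactness via the uniform $Z^p_{\lambda,T}$ bound and the H\"older estimates of Lemma~\ref{TimeSpaceRegBdd}) is the same as the paper's. In Step~2, however, you take a genuinely different route: you pass to a subsequential limit $v$ and try to show that $v$ solves the skeleton equation with $g$, then invoke uniqueness. The paper instead compares $u^{g_n}$ directly with $u^g$: it sets $D_1(t)=\|u^{g_n}(t,\cdot)-u^g(t,\cdot)\|^2_{L^2_\lambda}$ and the companion increment quantity $D_2(t)$, splits the difference into terms $E_1,E_3$ (involving $\sigma(u^{g_n})-\sigma(u^g)$, controlled by $\int_0^t(t-s)^{2H-\frac32}[D_1+D_2]\,ds$) and terms $E_2,E_4$ (involving $g_n-g$, which vanish by weak convergence and uniform integrability in $\lambda\,dx$), and concludes $D_1+D_2\to0$ by the fractional Gr\"onwall lemma. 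The limit identification $u=u^g$ then follows from $L^2_\lambda$-convergence plus the already established $\cC$-convergence to $u$.

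The weak point in your argument is the claim that $\widetilde S_2(s)\to0$ by a ``Vitali/dominated convergence argument''. The relevant measure in the $h$-variable is $|h|^{2H-2}\,dh$, which is \emph{infinite} near $h=0$ (since $2H-2<-1$), so neither Vitali nor dominated convergence applies directly from the $Z^p_{\lambda,T}$ bound alone; uniform boundedness of $\int\|w_n(s,\cdot+h)-w_n(s,\cdot)\|^2_{L^p_\lambda}|h|^{2H-2}dh$ does not force its $L^2_\lambda$-counterpart to vanish. This can be repaired by splitting in $h$: for small $|h|$ use the uniform spatial H\"older bound from Lemma~\ref{TimeSpaceRegBdd}(iii) with $\gamma>\tfrac12-H$ (which requires $p>\tfrac{6}{4H-1}$) and $p$ large enough that $\int_\RR\lambda^{1-2/p}(x)\,dx<\infty$; for moderate $|h|$ bound $F_n(h)\lesssim C_M\widetilde S_1(s)\to0$; for large $|h|$ use the pointwise growth $|w_n(s,y)|\lesssim\lambda^{-1/p}(y)$ that follows from the time-H\"older estimate at $t=0$. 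The paper's Gr\"onwall route is cleaner precisely because $D_2$ appears self-referentially on both sides of the inequality, so one never has to prove that a fractional seminorm of $u^{g_{n}}-v$ tends to zero from local uniform convergence alone.
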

\begin{proof}
The proof is divided into two steps.  In Step 1, we prove that  the family $\{u^{g_n}\}_{n\geq1}$ is relatively compact  in $\mathcal{C}([0,T]\times\RR)$, which implies that there exists a subsequence of $\{u^{g_n}\}_{n\geq1}$ (still denoted by $\{u^{g_n}\}_{n\geq1}$)   such that $u^{g_n}\rightarrow u$ as $n\rightarrow\infty$ in $\mathcal{C}([0,T]\times\RR)$ for some function $u\in\mathcal{C}([0,T]\times\RR)$. In Step 2, we show that $u=u^g$.

{\bf{Step 1.}} 
Recall that \begin{equation}\label{ugn}
u^{g_n}(t,x)=\,1+\int_0^t\langle p_{t-s}(x-\cdot)\sigma(s, \cdot,u^{g_n}(s, \cdot)), {g_n}(s,\cdot)\rangle_{\mathcal H}ds.
 \end{equation}
   { Since  $\{g_n\}_{n\ge1}\subset S^N$,
 by Proposition \ref{thm solu skeleton}, we know that  for any $p\geq 2$,
 \begin{equation}\label{ugn-z-normal}
 \sup_{n\geq1}\left\|u^{g_n}\right\|_{ Z_{\lambda,T}^p}<\infty.
\end{equation}}

As in \eqref{461-transform}, for any $\alpha\in(0,1)$, we have
\begin{align*}
& \int_0^t\langle p_{t-s}(x-\cdot)\sigma(s, \cdot,u^{g_n}(s, \cdot)), {g_n}(s,\cdot)\rangle_{\mathcal H}ds
\simeq\int_0^t\int_{\RR}(t-r)^{\alpha-1}\cdot p_{t-r}(x-z)\cdot I_\alpha(r,z)dzdr,
 \end{align*}
 where
 \begin{align*}
 I_\alpha(r,z):=\,\int_0^r\langle (r-s)^{-\alpha}p_{r-s}(z-\cdot)\sigma(s, \cdot,u^{g_n}(s, \cdot)), {g_n}(s,\cdot)\rangle_{\mathcal H}ds.
 \end{align*}
 If $\alpha<\frac{H}{2}$, then by using the same technique  as that in the proof of (\ref{J-12-Third}), we have
\begin{equation}\label{I-bound-first}
\begin{split}
 \big\|I_\alpha(r,\cdot)\big\|^p_{L^p_\lambda(\mathbb{R})}
\lesssim \,1+\|u^{g_n}\big\|^p_{{Z}^p_{\lambda,T}}.
\end{split}
\end{equation}
Fix $\gamma\in(0,1)$. By using the same method as that in the proof  of    \eqref{phi-estimate}, if $\alpha<\frac{H}{2}$, then we have
     \begin{equation}\label{faignthx1}
     \begin{split}
      & \sup\limits_{\substack{t,t+h\in[0,T], \\ x\in \RR}}\lambda^{\frac{1}{p}}(x)\cdot    \left|  {u^{g_n}(t+h,x)-u^{g_n}(t,x)}\right| \\
       \lesssim &\, |h|^{\gamma}\cdot \left(\int_{0}^{t}(t-r)^{q(\alpha-1-\gamma)+\frac{1- q}{2}}dr\right)^{\frac{1}{q}}\cdot \left(1+\|u^{g_n}\|_{Z_{\lambda,T}^p}\right).
       \end{split}
     \end{equation}
  {If  $\alpha<\frac{H}{2}$ and $q(\alpha-1-\gamma)+\frac{1-q}{2}>-1$, namely,} if $p>\frac{3}{H}$ and
     $0<\gamma<\frac H2-\frac{3}{2p}$, then there exists a positive constant $ C_{T,p,H,N,\gamma}$ such that
     \begin{align}\label{continous_time_ugn}
       \sup\limits_{\substack{t, t+h\in[0,T], \\ x\in \RR}}\lambda^{\frac{1}{p}}(x)\cdot    \left|u^{g_n}(t+h,x)-u^{g_n}(t,x)\right| \leq \, C_{T,p,H,N,\gamma}\cdot |h|^{\gamma}\cdot \left(1+\|u^{g_n}\|_{Z_{\lambda,T}^p}\right).
     \end{align}
     By using the same method as  that in the proof of \eqref{481-x-y}, if  $\alpha<\frac{H}{2}$, then we have
 \begin{equation}\label{faigntxyh2}
     \begin{split}
&\,\left|  {u^{g_n}(t,x)-u^{g_n}(t,y)}\right|\\
\lesssim&\,\big|x-y\big|^\gamma\cdot\left[\lambda^{-\frac{1}{p}}(x)+\lambda^{-\frac{1}{p}}(y)\right]
\cdot\left(\int_{0}^{t}(t-r)^{(\alpha q-\frac{3q}{2}+\frac{1}{2})-\frac{\gamma q}{2}}dr\right)^{\frac{1}{q}}
 \cdot\left(1+\|u^{g_n}\big\|_{{Z}^p_{\lambda,T}}\right).
 \end{split}
 \end{equation}
  {If $\alpha<\frac{H}{2}$ and $(\alpha q-\frac{3q}{2}+\frac{1}{2})-\frac{\gamma q}{2}$, namely,} if $p>\frac{3}{H}$ and
   $0<\gamma<H-\frac{3}{p}$,  then there exists a positive constant $ C_{T,p,H,N,\gamma}$ such that
     \begin{align}\label{Lemma-4.3-iii-u-gn}
       \sup\limits_{\substack{t\in[0,T], \\ x, h\in \RR}}\frac{\left|u^{g_n}(t,x)-u^{g_n}(t,y)\right|}{\lambda^{-\frac{1}{p}}(x)+\lambda^{-\frac{1}{p}}(y)}  \leq\, C_{T,p,H,N,\gamma}\cdot |x-y|^{\gamma}\cdot \left(1+\|u^{g_n}\|_{Z_{\lambda,T}^p}\right).
     \end{align}
  {By  \eqref{continous_time_ugn} and \eqref{Lemma-4.3-iii-u-gn}, we have that, for any   $R>0$ and $\gamma>0$, there exists $\theta>0$ such that
\begin{align*}
       \max\limits_{\substack{|t-s|+|x-y|\leq \theta, \\ 0\leq t,s \leq T;\,    -R\le x,y \le R }}|u^{g_n}(t,x)-u^{g_n}(s,y)|  \leq\gamma.
     \end{align*}
Combining this with the initial value $u^{g_n}(0,\cdot)\equiv1$, we know that $\{u^{g_n}\}_{n\geq1}$ is relatively compact on the space $({\mathcal C}([0,T]\times\RR), d_{\mathcal C})$ by the Arzel\`a-Ascoli theorem.} Thus,
 there exists a subsequence of $\{u^{g_n}\}_{n\geq1}$ (still denoted by $\{u^{g_n}\}_{n\geq1}$) and $u\in\mathcal{C}([0,T]\times\RR)$ such that $u^{g_n}\rightarrow u$ as $n\rightarrow \infty$.
  {By Lemma \ref{lemma3.3limits} (i) and \eqref{ugn-z-normal}}, we have   that, for any $p\ge2$,
\begin{align}\label{and}
\|u\|_{ Z_{\lambda,T}^p}<\infty.
\end{align}

  {\textbf{Step 2.}} In this step, we prove that $u=u^g$.
  Denote that
  \begin{align*}
  D_1(t)=\,\int_{\mathbb{R}}|u^{g_n}(t,x)-u^g(t,x)|^2\lambda(x)dx,
  \end{align*}
and
   \begin{align*}
  D_2(t)=\,\int_{\RR^2}\left|u^{g_n}(t,x)-u^g(t,x)-u^{g_n}(t,x+h)+u^g(t,x+h)\right|^2\cdot|h|^{2H-2}\lambda(x)dhdx.
  \end{align*}
  According to  \eqref{LPlambdaRNorm}, \eqref{N*pNorm}  and \eqref{ugZlamdanormal}, we know that $\displaystyle\sup_{t\in[0,T]}D_1(t)<\infty$ and $\displaystyle\sup_{t\in[0,T]}D_2(t)<\infty$.
  Recall $D_{t}(x,h)$ defined in \eqref{TechDt} and denote that $$\triangle_n(t,x,y):=\sigma(t,x,u^{g_n}(t,y))-\sigma(t,x,u^g(t,y)).$$
  By \eqref{eq skeleton} and \eqref{ugn}, we have
  \begin{equation}\label{D_1+D_2}
      \begin{split}
        &D_1(t)+D_2(t)\\
        \leq&\,2\int_{\RR}\left|\int^t_0\langle p_{t-s}(x-\cdot)\triangle_n(s, \cdot,\cdot), {g_n}(s,\cdot)\rangle_{\mathcal H}ds\right|^2\lambda(x)dx\\
        &+\,2\int_{\RR}\left|\int^t_0\langle p_{t-s}(x-\cdot)\sigma(s,\cdot,u^g(s,\cdot)), {g_n}(s,\cdot)-g(s,\cdot)\rangle_{\mathcal H}ds\right|^2\lambda(x)dx\\
        &+\,2\int_{\RR}\left|\int^t_0\langle D_{t-s}(x-\cdot,h)\triangle_n(s, \cdot,\cdot), {g_n}(s,\cdot)\rangle_{\mathcal H}ds\right|^2|h|^{2H-2}\lambda(x)dhdx\\
        &+\,2\int_{\RR}\left|\int^t_0\langle D_{t-s}(x-\cdot)\sigma(s,\cdot,u^g(s,\cdot)), {g_n}(s,\cdot)-g(s,\cdot)\rangle_{\mathcal H}ds\right|^2|h|^{2H-2}\lambda(x)dhdx\\
        =:&\,2\left (E_1(t)+E_2(t)+E_3(t)+E_4(t)\right).
      \end{split}
  \end{equation}
  By the similar technique as that  in the uniqueness of the proof of Proposition
   \ref{thm solu skeleton}, we have
   \begin{equation}\label{E_1+E_3}
      \begin{split}
        E_1(t)+E_3(t)
        \lesssim\,
   \int^t_0\left((t-s)^{H-1}+(t-s)^{2H-\frac{3}{2}}\right)\cdot\left[D_1(s)+D_2(s)\right]ds.
      \end{split}
  \end{equation}
 On the other hand, denote that
 \begin{align*}
   G_n(t,x):=\left|\int^t_0\langle p_{t-s}(x-\cdot)\sigma(s,\cdot,u^g(s,\cdot)), {g_n}(s,\cdot)-g(s,\cdot)\rangle_{\mathcal H}ds\right|^2.
 \end{align*}
  By Lemma \ref{lemma3.3limits} (ii),
 we know that for almost all $x\in \mathbb R$,
 \begin{align}\label{eq H G1}
    \int_{0}^{t}\left\| p_{t-s}(x-\cdot)\sigma(s,\cdot,u^{g}(s,\cdot))\right\|_{\HH}^2ds<\infty.
 \end{align}
 This, together with the weak convergence of $g_n$ to $g$, implies that  almost all $x\in \mathbb R$,
  \begin{align}\label{eq H G2}
     G_n(t,x) \rightarrow 0, \ \ \text{as }  n\rightarrow \infty
 \end{align}
 Since $g_n, g\in S^N$,  by using the Cauchy-Schwarz inequality, \eqref{eq H product}, and a change of variable, we have that, for any $p\geq 2$,
 \begin{align}
       &\int_{\RR} G_n(t,x) ^{\frac{p}{2}}\lambda(x)dx\notag\\
      \lesssim&\,\int_{\RR}\lc\int_{0}^{t}\left\| p_{t-s}(x-\cdot)\sigma(s,\cdot,u^{g}(s,\cdot))\right\|_{\HH}^2ds\rc^{\frac p2}\lambda(x)dx \notag\\
     \simeq&\,
       \int_{\RR}\bigg(\int_{0}^{t}\int_{\RR^2}  \Big|p_{t-s}(x-y-h)\sigma(s,y+h,u^{g}(s,y+h))\\
       &\qquad\qquad\quad-p_{t-s}(x-y)\sigma(s,y,u^{g}(s,y))\Big|^2 \cdot|h|^{2H-2}dhdyds\bigg)^{\frac p2}\lambda(x)dx.\notag
     \end{align}
  By the similar technique as that  in Step 2 in the proof of Proposition
   \ref{thm solu skeleton}, we have
  \begin{align*}
       &\sup_{n\geq1}\int_{\RR} G_n(t,x) ^{\frac{p}{2}}\lambda(x)dx
      \lesssim \|u^g\|^p_{Z^p_{\lambda,T}}<\infty.
   \end{align*}
 It follows from \cite[p. 105, Exercise 8]{Chung} that $\{G_n(t,x)\}_{n\geq 1}$ is $L^1$-uniformly integrable in $(\mathbb R, \lambda(x)dx)$, namely,
 \begin{align}\label{Mda}
     \lim_{M\rightarrow\infty}\sup_{n\geq1}
     \int_{ G_n(t,x) >M} G_n(t,x) \lambda(x)dx=0.
 \end{align}
By \eqref{eq H G2} and the dominated convergence theorem,  we have
 \begin{equation}\label{Mxiao}
     \begin{split}
  \lim_{n\rightarrow\infty}
  \int_{ G_n(t,x) \leq M} G_n(t,x) \lambda(x)dx
  =\,0.
      \end{split}
 \end{equation}
 By \eqref{Mda} and \eqref{Mxiao}, we have
 \begin{align}\label{E2lim0}
   \lim_{n\rightarrow\infty} E_2(t)=0.
 \end{align}
Using the same technique as that in the proof of \eqref{E2lim0}, we can prove that
  \begin{align}\label{E4lim0}
   \lim_{n\rightarrow\infty} E_4(t)=0.
 \end{align}
Since $D_1(t)$ and $D_2(t)$ are uniformly bounded on $[0,T]$, they are  integrable on $[0,T]$. Putting \eqref{D_1+D_2}, \eqref{E_1+E_3}, \eqref{E2lim0} and \eqref{E4lim0} together, by the fractional Gr\"onwall lemma (  {\cite[Lemma 7.1.1]{Herry1981}}), we have $$\displaystyle\lim_{n\rightarrow\infty}\left[D_1(t)+D_2(t)\right]=0,  \, \text{for all }\, t\in[0,T].$$
 In particular, $u^{g_n}(t,\cdot)\rightarrow u^g(t,\cdot)$ as $n\rightarrow \infty$ in the space $ L_{\lambda}^2(\RR)$ for all $t\in [0,T]$.
   Since
   $u^{g_n}$ also converges to $u$ as $n\rightarrow \infty$ in the space $\left(\mathcal{C}([0,T]\times\RR),d_{\mathcal{C}}\right)$, the uniqueness of the limit of $u^{g_n}$ implies that $u=u^g$.

 The proof is complete.
\end{proof}

\section{Verification of Condition \ref{cond1} $(b)$}

For any $\e>0$, define the  solution functional  $\Gamma^{\e}:    {C([0,T];\RR^{\infty})} \rightarrow \mathcal C([0,T]\times\RR) $ by
 \begin{align}\label{eq Gamma e}
 \Gamma^{\e}\left(W(\cdot)\right):=u^\e,
 \end{align}
  where $u^\e$ stands for the solution of Eq.\,\eqref{SHE} and $W$ can be regarded  as a  cylindrical Brownian motion on $\HH$ by \eqref{eq int 10}.

Let $\{g^{\e}\}_{\e>0}\subset \mathcal U^N$ be a given family of stochastic processes.
By the Girsanov theorem, it is easily to see that $\tilde{u}^{\e}:=\Gamma^{\e}\left(W({\cdot})+\frac{1}{\sqrt \e} \int_0^{\cdot}{g}^{\e}(s)ds \right)$ is the unique solution of the  equation
 \begin{equation}\label{eq u e1}
 \begin{split}
  \frac{\partial \tilde{u}^{\e}(t,x)}{\partial t}
  =&\,\frac{\partial^2 \tilde{u}^{\e}(t,x)}{\partial x^2}+\sqrt{\e}\sigma(t,x,\tilde{u}^{\e}(t,x))\dot{W}(t,x) \\
  &+\left\langle \sigma(t,\cdot,\tilde{u}^{\e}(t,\cdot)), g^{\e}(t,\cdot)\right\rangle_{\HH}, \ \ \  \,\,t>0,\,\,x\in\RR
\end{split}
\end{equation}
with the initial value $\tilde{u}^{\e}(0,\cdot)\equiv1$. 

Recall the map $ \Gamma^0$ defined by \eqref{eq Gamma0}. Then 
  $\bar{u}^{\e}:=\Gamma^0\left(\int_0^{\cdot} g^{\e}(s)ds\right)$    solves the equation
 \begin{align}\label{eq u e2}
  \frac{\partial \bar{u}^{\e}(t,x)}{\partial t}
  =&\,\frac{\partial^2 \bar{u}^{\e}(t,x)}{\partial x^2}+\left\langle \sigma(t,\cdot,\bar{u}^{\e}(t,\cdot)), g^{\e}(t,\cdot)\right\rangle_{\HH},  \,\,t>0,\,\,x\in\RR
\end{align}
with the initial value $\bar{u}^{\e}(0,\cdot)\equiv1$. 

Equivalently,  we have
 \begin{equation}\label{eq SPDE Y-1}
\begin{split}
   \tilde{u}^{\e}(t,x)
  =&\,1+\sqrt{\e}\int^t_0\int_{\RR}p_{t-s}(x-y)\sigma(s,y,\tilde{u}^{\e}(s,y)){W}(ds,dy) \\
  &+\int^t_0\left\langle p_{t-s}(x-\cdot)\sigma(s,\cdot,\tilde{u}^{\e}(s,\cdot)), g^{\e}(s,\cdot)\right\rangle_{\HH}ds,
\end{split}
\end{equation}
and
\begin{align}\label{eq SPDE Z-1}
   \bar{u}^{\e}(t,x)
  =&\,1+\int^t_0\left\langle p_{t-s}(x-\cdot)\sigma(s,\cdot,\bar{u}^{\e}(s,\cdot)), g^{\e}(s,\cdot)\right\rangle_{\HH}ds.
\end{align}
 \begin{proposition}\label{proposition-4-3} Assume that   $\sigma$ satisfies the hypothesis $(\mathbf{H})$ for some constant $p_0>\frac{6}{4H-1}$.  
For every $N<+\infty$ and $\{g^{\e}\}_{\e>0}\subset \mathcal U^N$, it holds that for any   $\delta>0$,
    $$\lim_{\e\rightarrow 0}\mathbb P\left(d_{\mathcal C}(\tilde{u}^{\e}, \bar{u}^{\e})>\delta\right)=0. $$ \end{proposition}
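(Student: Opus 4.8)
The plan is to subtract the two mild formulations and run a fractional Gr\"onwall argument on the difference $w^\e:=\tilde u^\e-\bar u^\e$ in the weighted space, then to upgrade the resulting weighted convergence to uniform convergence on compacts so as to control the metric $d_{\cC}$ of \eqref{e.4.metric}. Subtracting \eqref{eq SPDE Z-1} from \eqref{eq SPDE Y-1} gives
\[
w^\e(t,x)=\sqrt\e\int_0^t\!\!\int_\RR p_{t-s}(x-y)\sigma(s,y,\tilde u^\e(s,y))W(ds,dy)
+\int_0^t\big\langle p_{t-s}(x-\cdot)\Delta^\e(s,\cdot),g^\e(s,\cdot)\big\rangle_{\HH}\,ds,
\]
where $\Delta^\e(s,y):=\sigma(s,y,\tilde u^\e(s,y))-\sigma(s,y,\bar u^\e(s,y))$. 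Since $g^\e\in\mathcal U^N$ we have $\int_0^T\|g^\e(s)\|_{\HH}^2\,ds\le 2N$ almost surely, and $\bar u^\e(\omega)=u^{g^\e(\omega)}$ with $g^\e(\omega)\in S^N$, so Proposition \ref{thm solu skeleton} gives the pathwise bound $\|\bar u^\e(\omega)\|_{Z^p_{\lambda,T}}\le\sup_{g\in S^N}\|u^g\|_{Z^p_{\lambda,T}}<\infty$ for every $p\ge2$, and the spatial/temporal regularity of Lemma \ref{TimeSpaceRegBdd} (valid for the skeleton solutions $u^g$, $g\in S^N$) applies to $\bar u^\e$ uniformly in $\e$.

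First I would establish the corresponding estimates for $\tilde u^\e$. Repeating the argument of Lemma \ref{UniBExist} on \eqref{eq SPDE Y-1}, with the stochastic integral controlled by the Burkholder--Davis--Gundy inequality of Proposition \ref{HBDG} and the extra drift controlled by Cauchy--Schwarz in $\HH$ together with $\int_0^T\|g^\e\|_{\HH}^2\,ds\le 2N$, yields $\sup_{\e\in(0,1)}\|\tilde u^\e\|_{\cZ^p_{\lambda,T}}<\infty$. The time and space H\"older bounds of Lemma \ref{TimeSpaceRegBdd} then transfer to $\tilde u^\e$ (with the $L^p(\Omega)$-norm inserted), so that $\{\tilde u^\e\}_{\e}$ and $\{\bar u^\e\}_{\e}$ are uniformly equicontinuous in $(t,x)$ on each compact $[0,T]\times[-n,n]$ with uniformly bounded moments.

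Next I would control
\[
\Phi^\e(t):=\|w^\e(t,\cdot)\|^2_{L^p_\lambda(\Omega\times\RR)}+\big(\cN^*_{\frac12-H,\,p}w^\e(t)\big)^2 .
\]
The stochastic-integral part of $w^\e$ carries the prefactor $\sqrt\e$; estimating it exactly as the terms $\cA_1,\cA_2,\cA_3$ and $\cI_1,\cI_2,\cI_3$ in the proof of Lemma \ref{UniBExist} (through Proposition \ref{HBDG}) bounds its contribution to $\Phi^\e(t)$ by $C\e\,(1+\|\tilde u^\e\|^2_{\cZ^p_{\lambda,T}})=O(\e)$. For the drift part I would use Cauchy--Schwarz in $\HH$ and $\int_0^T\|g^\e\|_{\HH}^2\,ds\le 2N$, then expand $\|p_{t-s}(x-\cdot)\Delta^\e(s,\cdot)\|_{\HH}^2$ into three pieces exactly as in \eqref{p_tsxcdottriangleg_n_mathcalH} and \eqref{4.84}; invoking \eqref{Lipschitzianuniformlycondu}, \eqref{DuSigam} and the weighted Lipschitz condition \eqref{DuSigamAdd} together with the bound \eqref{490-490} and the pathwise spatial-regularity estimates for $\tilde u^\e,\bar u^\e$ reduces these to self-referential terms, giving
\[
\Phi^\e(t)\ls \e\,C+\int_0^t\big((t-s)^{H-1}+(t-s)^{2H-\frac32}\big)\,\Phi^\e(s)\,ds .
\]
Because $H>\tfrac14$ both kernels are integrable, so the fractional Gr\"onwall lemma (already used in the proof of Proposition \ref{thm solu skeleton}) yields $\sup_{t\in[0,T]}\Phi^\e(t)\ls\e\,C\to0$; in particular $\EE\|w^\e(t,\cdot)\|^p_{L^p_\lambda(\RR)}\to0$ uniformly in $t$.

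Finally I would pass to the metric $d_{\cC}$. Fixing $n$, on $[-n,n]$ the weight $\lambda$ is bounded below, so the previous step gives $\EE\int_{-n}^{n}|w^\e(t,x)|^p\,dx\to0$ uniformly in $t$, while the uniform-in-$\e$ H\"older equicontinuity of $w^\e$ provides a moment bound on its H\"older seminorm over $[0,T]\times[-n,n]$. A pathwise interpolation (Gagliardo--Nirenberg type) inequality then bounds $\sup_{t\le T,\,|x|\le n}|w^\e(t,x)|$ by a positive power of the $L^p$-norm times the H\"older seminorm; taking expectations and using H\"older's inequality in $\Omega$ forces $\sup_{t\le T,\,|x|\le n}|w^\e|\to0$ in probability for every $n$, whence $d_{\cC}(\tilde u^\e,\bar u^\e)\to0$ in probability. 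I expect the main obstacle to be the drift estimate: making the weights cancel in the term produced by \eqref{DuSigamAdd} (which needs $p\ge p_0$ together with the pathwise regularity bounds, now under an expectation) while keeping every moment finite, and, independently, the passage from weighted $L^p$-convergence to uniform convergence on compacts via equicontinuity.
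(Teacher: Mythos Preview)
Your overall strategy—subtracting the mild formulations, bounding the stochastic term by $O(\sqrt\e)$, running a fractional Gr\"onwall on the drift, then upgrading to uniform convergence on compacts—is essentially the paper's. But there is a genuine gap in the drift step that the paper handles by a localization argument you have omitted.

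The issue is the term produced by \eqref{490-490} (and its analogue in the $\cN^*_{\frac12-H,\,p}$ estimate). After integrating against $|h|^{2H-2}$ it becomes
\[
\lambda^{\frac{2}{p_0}}(y)\,|w^\e(s,y)|^2\cdot\Big(\big[\cN_{\frac12-H}\tilde u^\e(s,y)\big]^2+\big[\cN_{\frac12-H}\bar u^\e(s,y)\big]^2\Big).
\]
For $\bar u^\e$ the bracket is indeed bounded pathwise (uniformly in $\e$) by Proposition \ref{thm solu skeleton} and Lemma \ref{TimeSpaceRegBdd}, so that factor can be absorbed. For $\tilde u^\e$ you only have the \emph{moment} bound $\big\|\sup_{t,x}\lambda^{1/p}(x)\cN_{\frac12-H}\tilde u^\e(t,x)\big\|_{L^p(\Omega)}<\infty$; this is not a pathwise bound. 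Inside the expectation the product $|w^\e|^2\cdot[\cN_{\frac12-H}\tilde u^\e]^2$ cannot be split without H\"older in $\Omega$, which raises the exponent on $w^\e$ and prevents the Gr\"onwall loop from closing in the same norm $\Phi^\e$. Your proposed inequality $\Phi^\e(t)\lesssim \e C+\int_0^t(\cdots)\Phi^\e(s)\,ds$ therefore does not follow. (You in fact anticipated exactly this obstacle in your last sentence.)

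The paper fixes this by introducing the stopping time
\[
\tau_k=\inf\Big\{r\ge0:\ \sup_{s\le r,\,x}\lambda^{1/p}(x)\cN_{\frac12-H}\tilde u^\e(s,x)\ge k\ \text{or}\ \sup_{s\le r,\,x}\lambda^{1/p}(x)\cN_{\frac12-H}\bar u^\e(s,x)\ge k\Big\},
\]
and working with the stopped processes $\tilde u^\e_k,\bar u^\e_k$. On $[0,\tau_k]$ the random factor is now bounded by $k$ pathwise, so the offending term contributes $k^2\int_0^t(t-s)^{H-1}\|w^\e_k(s,\cdot)\|^2_{L^p_\lambda(\Omega\times\RR)}\,ds$ and the Gr\"onwall argument closes (Lemma \ref{utildebar}), yielding $\|\tilde u^\e_k-\bar u^\e_k\|_{\cZ^p_{\lambda,T}}\to0$ for each fixed $k$. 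The moment bound on $\sup\lambda^{1/p}\cN_{\frac12-H}\tilde u^\e$ then gives $\tau_k\uparrow\infty$ uniformly in $\e$, and one combines the two via the standard splitting $\mathbb P(\cdot)\le\mathbb P(\cdot,\tau_k>T)+\mathbb P(\tau_k\le T)$.

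For the last step the paper does not interpolate; it proves tightness of $\{\tilde u^\e-\bar u^\e\}_\e$ in $\cC([0,T]\times\RR)$ from the H\"older moment bounds, identifies any weak limit as $0$ via the weighted $L^p$ convergence just obtained, and concludes $d_{\cC}(\tilde u^\e,\bar u^\e)\to0$ in distribution, hence in probability. Your interpolation route could also be made to work, but only after you have the stopping-time localization in place.
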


Before proving Proposition \ref{proposition-4-3}, we give the following lemmas.
 \begin{lemma}\label{lemma-5-1}  Assume that   $\sigma$ satisfies the hypothesis $(\mathbf{H})$ for some constant $p_0>\frac{6}{4H-1}$.   Then, it holds that  for any $p> p_0$,
   \begin{align} \label{4-95-bound}
   \sup_{0<\e<1}\|\tilde{u}^{\e}\|_{\mathcal{Z}^p_{\lambda,T}}<\infty,\,\,\,\,\sup_{0<\e<1}\|\bar{u}^{\e}\|_{\mathcal{Z}^p_{\lambda,T}}<\infty.
   \end{align}
   \end{lemma}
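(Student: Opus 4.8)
The plan is to treat the two families separately, getting the bound for $\bar{u}^{\e}$ for free from the deterministic theory and the bound for $\tilde{u}^{\e}$ by re-running the a priori estimates of the skeleton equation while absorbing the extra stochastic term through the Burkholder--Davis--Gundy inequality. First I would dispose of $\bar{u}^{\e}$. Since $\{g^{\e}\}_{\e>0}\subset\mathcal U^N$, we have $g^{\e}(\omega)\in S^N$ for $\mathbb P$-a.e. $\omega$, and by \eqref{eq SPDE Z-1} the path $\bar{u}^{\e}(\cdot,\cdot,\omega)$ is precisely the skeleton solution associated with $g^{\e}(\omega)$. Proposition \ref{thm solu skeleton} therefore yields, for every $p\ge2$,
$$
\big\|\bar{u}^{\e}(\cdot,\cdot,\omega)\big\|_{Z^p_{\lambda,T}}\le\sup_{g\in S^N}\|u^g\|_{Z^p_{\lambda,T}}=:C_{N,p}<\infty\quad\text{for $\mathbb P$-a.e.\ }\omega,
$$
a bound independent of $\e$. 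In particular $\int_\RR|\bar{u}^{\e}(t,x,\omega)|^p\lambda(x)\,dx\le C_{N,p}^p$ pathwise; taking expectations, and arguing likewise for the spatial increments, gives $\sup_{0<\e<1}\|\bar{u}^{\e}\|_{\mathcal Z^p_{\lambda,T}}\le C_{N,p}<\infty$, which is the second bound in \eqref{4-95-bound}.

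The substance is the bound for $\tilde{u}^{\e}$ defined by the mild formula \eqref{eq SPDE Y-1}. I would set
$$
\Psi^{\e}(t):=\big\|\tilde{u}^{\e}(t,\cdot)\big\|^2_{L^p_\lambda(\Omega\times\RR)}+\Big(\cN^*_{\frac12-H,\,p}\tilde{u}^{\e}(t)\Big)^2,
$$
and decompose $\tilde{u}^{\e}(t,x)=1+\sqrt{\e}\,M^{\e}(t,x)+N^{\e}(t,x)$, where $M^{\e}(t,x)=\int_0^t\int_\RR p_{t-s}(x-y)\sigma(s,y,\tilde{u}^{\e}(s,y))W(ds,dy)$ is the stochastic integral and $N^{\e}$ is the drift $\int_0^t\langle p_{t-s}(x-\cdot)\sigma(s,\cdot,\tilde{u}^{\e}(s,\cdot)),g^{\e}(s,\cdot)\rangle_{\HH}\,ds$. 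The drift $N^{\e}$ is handled exactly as in Steps 2 and 3 of the proof of Lemma \ref{UniBExist}: one uses the Cauchy--Schwarz inequality in $\HH$ together with $\int_0^T\|g^{\e}(s)\|_{\HH}^2\,ds\le 2N$ (valid $\mathbb P$-a.s.\ since $g^{\e}\in\mathcal U^N$), and then the decompositions into $\cA_1,\cA_2,\cA_3$ for the $L^p_\lambda$-part and $\cI_1,\cI_2,\cI_3$ for the $\cN^*$-part, each estimated via Lemmas \ref{TechLemma1}, \ref{TechLemma2}, \ref{TechLemma4} and \ref{TechLemma5}. For the stochastic term $M^{\e}$ I would invoke the Burkholder--Davis--Gundy inequality of Proposition \ref{HBDG}: directly for the $L^p_\lambda$-part, and for the $\cN^*$-part after writing the spatial increment $M^{\e}(t,x+h)-M^{\e}(t,x)$ as the stochastic integral of $D_{t-s}(x-\cdot,h)\,\sigma(\cdots)$. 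Applying \eqref{e.bdg} converts each $L^p(\Omega)$-norm into an integral of $\cN_{\frac12-H,\,p}$ of the integrand, which then splits into terms of exactly the same structure as the $\cA_j$ and $\cI_j$ above. Because $0<\e<1$, the prefactor $\e$ produced by BDG is bounded by $1$, so these contributions are majorised uniformly in $\e$.

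Putting the two blocks together produces, for some constant $C_{T,p,H,N}>0$ independent of $\e$, a convolution inequality of the form
$$
\Psi^{\e}(t)\le C_{T,p,H,N}\Big(1+\int_0^t\big[(t-s)^{2H-\frac32}+(t-s)^{H-1}+(t-s)^{-\frac12}\big]\,\Psi^{\e}(s)\,ds\Big).
$$
Since $H>\frac14$ all three exponents exceed $-1$, so the extension of Gr\"onwall's lemma (\cite[Lemma 15]{Dalang1999}, equivalently the fractional Gr\"onwall lemma \cite[Lemma 7.1.1]{Herry1981}) applies and gives $\sup_{0<\e<1}\sup_{t\in[0,T]}\Psi^{\e}(t)<\infty$, that is, the first bound in \eqref{4-95-bound}. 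As these estimates go through for every $p\ge2$, they hold in particular for the required range $p>p_0$. The only genuinely new ingredient relative to Lemma \ref{UniBExist} is the uniform-in-$\e$ control of the stochastic term $M^{\e}$, and this is where I expect the main care to be needed; however, the factor $\sqrt\e\le1$ renders the $\e$-dependence trivial, so the essential work reduces to the now-standard roughness bookkeeping identical to the skeleton and existence estimates.
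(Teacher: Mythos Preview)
Your approach for $\tilde u^{\e}$ differs from the paper's in a way that leaves a genuine circularity. You derive a self-referential convolution inequality for $\Psi^{\e}(t)$ and then invoke the fractional Gr\"onwall lemma. But \cite[Lemma 7.1.1]{Herry1981} requires $\Psi^{\e}$ to be locally integrable (in particular finite) on $[0,T]$ \emph{a priori}; otherwise the inequality reads $\infty\le\infty$ and yields nothing. Nor is \cite[Lemma 15]{Dalang1999} equivalent: that is a statement about \emph{sequences} $f_{n+1}\le C\big(1+\int\ldots f_n\big)$, not about a single function. You never establish that $\|\tilde u^{\e}\|_{\mathcal Z^p_{\lambda,T}}<\infty$ for each fixed $\e$ before closing the loop, and this does not follow from well-posedness of \eqref{eq SPDE Y-1}: Girsanov identifies the law of $\tilde u^{\e}$ with that of $u^{\e}$ only under a \emph{shifted} measure, whose density has $\e$-dependent moments under $\mathbb P$. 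The paper avoids the circularity by introducing a regularized noise $W_\eta$ and the smoothed space $\HH_\eta$, running a Picard iteration $\tilde u^{\e,n}_\eta$ for the regularized equation (each iterate lies in $\mathcal Z^p_{\lambda,T}$ thanks to the regularity of $\HH_\eta$), obtaining the recursive inequality $\tilde\Psi^{\e,n+1}_\eta(t)\le C\big(1+\int_0^t(t-s)^{2H-\frac32}\tilde\Psi^{\e,n}_\eta(s)\,ds\big)$ with $C$ independent of $n,\eta,\e$, applying Dalang's lemma to the sequence, and only then passing to the limits $n\to\infty$ and $\eta\to0$ via \cite[Lemma 4.6]{HW2019} and tightness.

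A smaller issue: your pathwise shortcut for $\bar u^{\e}$ handles the $L^p_\lambda(\Omega\times\RR)$-part (by Fubini) but not the $\cN^*_{\frac12-H,\,p}$-part. The pathwise $Z^p_{\lambda,T}$-bound controls $\int_\RR\big(\int_\RR|\bar u^{\e}(t,x+h)-\bar u^{\e}(t,x)|^p\lambda(x)\,dx\big)^{2/p}|h|^{2H-2}\,dh$ almost surely, whereas $\cN^*$ in the $\mathcal Z^p_{\lambda,T}$-norm places the expectation \emph{inside} the $(\cdot)^{2/p}$; since $x\mapsto x^{2/p}$ is concave for $p\ge2$, Jensen points the wrong way. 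One must instead re-run the estimates of Lemma \ref{UniBExist} with $L^p_\lambda(\Omega\times\RR)$-norms throughout (the almost-sure bound $\int_0^T\|g^{\e}(s)\|_\HH^2\,ds\le 2N$ still factors out), which is what the paper means by ``similar but simpler.''
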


\begin{proof}
 We give the details of the proof for  $\tilde{u}^{\e}$,  while the proof for   $\bar{u}^{\e}$ is similar but simpler which is omitted. Here,  the proof is inspired by the proof of \cite[Lemma 4.5]{HW2019}. 
 
\textbf{Step 1.}  
As in  (4.38) of \cite{HW2019},  let    
\begin{align*}
\frac{\partial}{\partial x}W_{\eta}(t,x):=\int_{\RR} p_{\eta}(x-y)W(t,dy), \ \ \ \ \text{for  any }  \eta>0. 
\end{align*}
Consider 
\begin{equation}\label{tildeue_eta_Def}
\begin{split}
\tilde{u}^{\e}_{\eta}(t,x)
  =&\,1+\sqrt{\e}\int^t_0\int_{\RR}p_{t-s}(x-y)\sigma(s,y,\tilde{u}^{\e}_{\eta}(s,y))W_{\eta}(ds,dy) \\
  &+\int^t_0\left\langle p_{t-s}(x-\cdot)\sigma(s,\cdot,\tilde{u}^{\e}_{\eta}(s,\cdot)), g^{\e}(s,\cdot)\right\rangle_{\HH_{\eta}}ds\\
  =:&\,1+\sqrt{\e}\Phi_{1,{\eta}}^{\e}(t,x)+\Phi_{2,{\eta}}^{\e}(t,x).
\end{split}
\end{equation}
Let
$$
\tilde{u}^{\e,0}_{\eta}(t,x)=1,
$$
and recursively for $n=0,1,2,...,$
 \begin{equation}\label{eq SPDE Y-1_n_e1}
\begin{split}
   \tilde{u}^{\e,n+1}_{\eta}(t,x)
  =&\,1+\sqrt{\e}\int^t_0\int_{\RR}p_{t-s}(x-y)\sigma(s,y,\tilde{u}^{\e,n}_{\eta}(s,y)){W}_{\eta}(ds,dy) \\
  &+\int^t_0\left\langle p_{t-s}(x-\cdot)\sigma(s,\cdot,\tilde{u}^{\e,n}_{\eta}(s,\cdot)), g^{\e}(s,\cdot)\right\rangle_{\HH_{\eta}}ds\\
  =:&1+\sqrt{\e}\Phi_{1,{\eta}}^{\e,n}(t,x)+\Phi_{2,{\eta}}^{\e,n}(t,x).
\end{split}
\end{equation}

By using  \cite[Lemma 4.15]{HHLNT2017} and the similar argument as  that in Step 1 in the proof of Lemma \ref{UniBExist}, we know that for any fixed $t\in [0,T]$ and $\eta>0$, when $n$ goes to infinity, $\tilde{u}^{\e,n}_{\eta}(t,\cdot)$ converges to $\tilde{u}^{\e}_{\eta}(t,\cdot)$ in $L^p_{\lambda}(\Omega\times\mathbb{R})$.
 
 By using the same method as  that   in the proof of Lemma \ref{UniBExist}, we obtain that
\begin{equation}\label{eq u e52}
\begin{split}
&\|\Phi_{2,{\eta}}^{\e,n+1}(t,\cdot)\|^2_{L^p_{\lambda}(\Omega\times\mathbb{R})}
+\left[\mathcal{N}^*_{\frac{1}{2}-H,\,p}\Phi^{\e,n+1}_{2,{\eta}}(t)\right]^2\\
\lesssim&\,1+\int^t_0\left((t-s)^{2H-\frac{3}{2}}+(t-s)^{H-1}+(t-s)^{-\frac{1}{2}}\right)\cdot \|\tilde{u}^{\e,n}_{\eta}(s,\cdot)\|^2_{L^p_{\lambda}(\Omega\times\RR)}ds\\
&+\,\int^t_0\left((t-s)^{H-1}+(t-s)^{-\frac{1}{2}}\right)\cdot\left[\mathcal{N}^*_{\frac{1}{2}-H,\,p}\tilde{u}^{\e,n}_{\eta}(s)\right]^2ds.
\end{split}
\end{equation}

Next, we will give some estimates for  $\|\Phi_{1,{\eta}}^{\e,n}(t,\cdot)\|_{L^p_{\lambda}(\Omega\times\mathbb{R})}$\ and $\mathcal{N}^*_{\frac{1}{2}-H,\,p}\Phi^{\e,n}_{1,{\eta}}(t)$.
  
  \textbf{Step 2.} In this step, we estimate $\|\Phi_{1,{\eta}}^{\e,n}(t,\cdot)\|_{L^p_{\lambda}(\Omega\times\mathbb{R})}$. 
     By the Burkholder-Davis-Gundy inequality, we have that
\begin{equation}\label{5.9label}
\begin{split}
&\mathbb{E}\left[|\Phi_{1,{\eta}}^{\e,n+1}(t,x)|^p\right]\\
\lesssim&\,\mathbb{E}\bigg(\int^t_0\int_{\RR^2}\big|p_{t-s}(x-y-h)\sigma(s,y+h,\tilde{u}^{\e,n}_{\eta}(s,y+h))\\
&\qquad\qquad\quad-p_{t-s}(x-y)\sigma(s,y,\tilde{u}^{\e,n}_{\eta}(s,y))\big|^2\cdot|h|^{2H-2}dhdyds\bigg)^{\frac{p}{2}}\\
   \lesssim&\,\mathbb{E}\bigg(\int_{0}^{t}\int_{\RR^2} p^2_{t-s}(x-y-h)  \big|\sigma(s,y+h,\tilde{u}^{\e,n}_{\eta}(s,y+h))
    -\sigma(s,y,\tilde{u}^{\e,n}_{\eta}(s,y+h))\big|^2\\ &\qquad\quad\cdot|h|^{2H-2}dhdyds\bigg)^{\frac p2} \\
     &+\,\mathbb{E}\bigg(\int_{0}^{t}\int_{\RR^2} p^2_{t-s}(x-y-h)  \big|\sigma(s,y,\tilde{u}^{\e,n}_{\eta}(s,y+h))-\sigma(s,y,\tilde{u}^{\e,n}_{\eta}(s,y))\big|^2\\
     &\qquad\quad\cdot|h|^{2H-2}dhdyds\bigg)^{\frac p2}\\
     &+\,\mathbb{E}\bigg( \int_{0}^{t}\int_{\RR^2}|D_{t-s}(x-y,h)|^2\cdot\left|\sigma(s,y,\tilde{u}^{\e,n}_{\eta}(s,y))\right|^2
             \cdot|h|^{2H-2} dhdyds\bigg)^{\frac p2}\\
   =:&\,\mathcal{L}_1(t,x)+\mathcal{L}_2(t,x)+\mathcal{L}_3(t,x),
\end{split}
\end{equation}
where
 $D_{t-s}(x-y,h)$ is defined by \eqref{TechDt}.

By using the same arguments as to that in Step 3 of the proof of Lemma \ref{UniBExist}, we have 
\begin{equation}\label{mathcalL_1label}
\begin{split}
&\left(\int_{\mathbb{\RR}}\mathcal{L}_1(t,x)\lambda(x)dx\right)^{\frac{2}{p}}\\
\lesssim&\,\int^t_0\int_{\RR}p^2_{t-s}(y)\left(\int_\RR\left(1+\|\tilde{u}^{\e,n}_{\eta}(s,x+y)\|^p_{L^p(\Omega)}\right)\lambda(x)dx\right)^{\frac{2}{p}}dyds\\
\lesssim&\,\int^t_0(t-s)^{-\frac{1}{2}}\cdot \left(\int_{\RR^2} p_{\frac{t-s}{2}}(y)\left(1+\|\tilde{u}^{\e,n}_{\eta}(s,x)\|^p_{L^p(\Omega)}\right)\lambda(x-y)dxdy\right)^{\frac{2}{p}}ds\\
\lesssim&\,\int^t_0(t-s)^{-\frac{1}{2}}\cdot \left(1+\|\tilde{u}^{\e,n}_{\eta}(s,\cdot)\|^2_{L^p_{\lambda}(\Omega\times\mathbb{R})}\right)ds,
\end{split}
\end{equation}
\begin{align}\label{mathcalL_2label}
&\left(\int_{\mathbb{\RR}}\mathcal{L}_2(t,x)\lambda(x)dx\right)^{\frac{2}{p}}\notag\\
\lesssim&\,\int^t_0\int_{\RR^2}p^2_{t-s}(y)\left(\int_\RR\|\tilde{u}^{\e,n}_{\eta}(s,x+y+h)-\tilde{u}^{\e,n}_{\eta}(s,x+y)\|^p_{L^p(\Omega)}\lambda(x)dx\right)^{\frac{2}{p}}\cdot |h|^{2H-2}dhdyds\notag\\
\lesssim&\,\int^t_0\int_{\RR}(t-s)^{-\frac{1}{2}}\cdot \Bigg(\int_{\RR^2}p_{\frac{t-s}{2}}(y)\|\tilde{u}^{\e,n}_{\eta}(s,x+h)-\tilde{u}^{\e,n}_{\eta}(s,x)\|^p_{L^p(\Omega)}\lambda(x-y)dxdy\Bigg)^{\frac{2}{p}}\\
&\quad\quad\cdot |h|^{2H-2}dhds\notag\\
\lesssim&\,\int^t_0(t-s)^{-\frac{1}{2}}\cdot \left[\mathcal{N}^*_{\frac{1}{2}-H,\,p}\tilde{u}^{\e,n}_{\eta}(s)\right]^2ds,\notag
\end{align}
and
\begin{align}\label{mathcalL_3label}
&\left(\int_{\mathbb{\RR}}\mathcal{L}_3(t,x)\lambda(x)dx\right)^{\frac{2}{p}}\notag\\
\lesssim&\,\int^t_0\int_{\RR^2}|D_{t-s}(y,h)|^2\cdot \left(\int_\RR\left(1+\|\tilde{u}^{\e,n}_{\eta}(s,x+y)\|^p_{L^p(\Omega)}\right)\lambda(x)dx\right)^{\frac{2}{p}}\cdot |h|^{2H-2}dhdyds\notag\\
\lesssim&\,\int^t_0(t-s)^{H-1}\cdot \Bigg(\int_{\RR^3}(t-s)^{1-H}\cdot |D_{t-s}(y,h)|^2\cdot \left(1+\|\tilde{u}^{\e,n}_{\eta}(s,x)\|^p_{L^p(\Omega)}\right)\lambda(x-y)\\
&\qquad\qquad\qquad\quad\cdot |h|^{2H-2}dhdxdy\Bigg)^{\frac{2}{p}}ds\notag\\
\lesssim&\,\int^t_0(t-s)^{H-1}\cdot \left(1+\|\tilde{u}^{\e,n}_{\eta}(s,\cdot)\|^2_{L^p_{\lambda}(\Omega\times\mathbb{R})}\right)ds.\notag
\end{align}
Therefore, by \eqref{5.9label}, \eqref{mathcalL_1label}, \eqref{mathcalL_2label} and \eqref{mathcalL_3label}, we have
\begin{align}\label{511.511}
\|\Phi_{1,{\eta}}^{\e,n+1}(t,\cdot)\|^2_{L^p_{\lambda}(\Omega\times\mathbb{R})}
=&\,\left(\int_{\mathbb{R}}\mathbb{E}\left[\left|\Phi_{1,{\eta}}^{\e,n+1}(t,x)\right|^p\right]\lambda(x)dx\right)^{\frac{2}{p}}\nonumber\\
\lesssim&\,1+\int^t_0\left((t-s)^{-\frac{1}{2}}+(t-s)^{H-1}\right)\cdot \|\tilde{u}_{\eta}^{\e,n}(s,\cdot)\|^2_{L^p_{\lambda}(\Omega\times\mathbb{R})}ds\\
&+\int^t_0(t-s)^{-\frac{1}{2}}\cdot \left[\mathcal{N}^*_{\frac{1}{2}-H,\,p}\tilde{u}^{\e,n}_{\eta}(s)\right]^2ds.\nonumber
\end{align}

\textbf{Step 3.} In this step, we deal with $\mathcal{N}^*_{\frac{1}{2}-H,\,p}\Phi_{1,{\eta}}^{\e,n}(t)$. By the Burkholder-Davis-Gundy inequality and the hypothesis $(\mathbf{H})$, the similar calculation as that in Step 2 of the proof of Lemma \ref{UniBExist} implies that
\begin{align*}
&\mathbb{E}\big[\left|\Phi_{1,{\eta}}^{\e,n+1}(t,x)-\Phi_{1,{\eta}}^{\e,n+1}(t,x+h)\right|^p\big]\\
\simeq&\,\mathbb{E}\bigg(\int^t_0\int_{\RR^2}\big|D_{t-s}(x-y-z,h)\sigma(s,y+z,\tilde{u}^{\e,n}_{\eta}(s,y+z))\nonumber\\
&\qquad\qquad\,\,\,\,\,-D_{t-s}(x-z,h)\sigma(s,z,\tilde{u}^{\e,n}_{\eta}(s,z))\big|^2\cdot|y|^{2H-2}dzdyds\bigg)^{\frac{p}{2}}\nonumber\\
\lesssim&\,\mathbb{E}\Bigg(\int^t_0\int_{\RR}|D_{t-s}(x-z,h)|^2\cdot \left(1+|\tilde{u}^{\e,n}_{\eta}(s,z)|^2\right)dzds\Bigg)^{\frac{p}{2}}\\
&+\,\mathbb{E}\Bigg(\int^t_0\int_{\RR^2}|D_{t-s}(x-y-z,h)|^2\cdot \left|\tilde{u}^{\e,n}_{\eta}(s,y+z)
-\tilde{u}^{\e,n}_{\eta}(s,z)\right|^2\cdot |y|^{2H-2}dzdyds\Bigg)^{\frac{p}{2}}\\
&+\,\mathbb{E}\left(\int^t_0\int_{\RR^2}|\Box_{t-s}(x-z,y,h)|^2\cdot \left(1+|\tilde{u}^{\e,n}_{\eta}(s,z)|^2\right)\cdot |y|^{2H-2}dzdyds\right)^{\frac{p}{2}}\\
=:&\,\mathcal{M}_1(t,x,h)+\mathcal{M}_2(t,x,h)+\mathcal{M}_3(t,x,h),
\end{align*}
where $\Box_{t-s}(x-z,y,h)$ is defined by \eqref{TechBoxt}.

By   \eqref{LpNormOmegatimesRR} and \eqref{N*pNorm}, we have
\begin{align}\label{5.100label}
\left[\mathcal{N}^*_{\frac{1}{2}-H,\,p}\Phi_{1,{\eta}}^{\e,n+1}(t)\right]^2
\lesssim\sum_{i=1}^3\int_{\RR}\left(\int_{\RR}\mathcal{M}_i(t,x,h)\lambda(x)dx\right)^{\frac{2}{p}}\cdot |h|^{2H-2}dh.
\end{align}
Applying a change of variable, Minkowski's inequality, Jensen's inequality and Lemma \ref{TechLemma5}, we have
\begin{align}
\int_{\RR}\left(\int_{\RR}\mathcal{M}_1(t,x,h)\lambda(x)dx\right)^{\frac{2}{p}}|h|^{2H-2}dh
\lesssim&\,\int^t_0(t-s)^{H-1}\cdot \left(1+\|\tilde{u}^{\e,n}_{\eta}(s,\cdot)\|^2_{L^p_{\lambda}(\Omega\times\RR)}\right)ds;\label{5.101label}\\
\int_{\RR}\left(\int_{\RR}\mathcal{M}_2(t,x,h)\lambda(x)dx\right)^{\frac{2}{p}}|h|^{2H-2}dh
\lesssim&\, \int^t_0(t-s)^{H-1}\cdot \left[\mathcal{N}^*_{\frac{1}{2}-H,\,p}\tilde{u}^{\e,n}_{\eta}(s)\right]^2ds;\label{5.102label}
\end{align}
\begin{equation}\label{J222-bound1}
\begin{split}
&\int_{\RR}\left|\int_{\RR}\mathcal{M}_{3}(t,x,h)\lambda(x)dx\right|^{\frac{2}{p}}\cdot |h|^{2H-2}dh \\
\lesssim&\,\int^t_0(t-s)^{2H-\frac{3}{2}}\Bigg((t-s)^{\frac{3}{2}-2H}\int_{\RR^4}\big| \Box_{t-s}(z,y,h)\big|^2\cdot |y|^{2H-2} \cdot |h|^{2H-2}\\
&\quad \cdot\left(1+\mathbb{E}\left[\left|\tilde{u}^{\e,n}_{\eta}(s,x)\right|^p\right]\right)\lambda(x-z)dxdydhdz\Bigg)^{\frac{2}{p}}ds \\
\lesssim&\,\int^t_0(t-s)^{2H-\frac{3}{2}}\cdot \left(1+\|\tilde{u}^{\e,n}_{\eta}(s,\cdot)\|^2_{L^p_{\lambda}(\Omega\times\RR)}\right)ds.
\end{split}
\end{equation}
Therefore, by \eqref{5.100label}, \eqref{5.101label}, \eqref{5.102label} and \eqref{J222-bound1}, we have
\begin{equation}\label{512.512}
\begin{split}
\left[\mathcal{N}^*_{\frac{1}{2}-H,\,p}\Phi^{\e,n+1}_{1,{\eta}}(t)\right]^2
\lesssim&\,1+\int^t_0\left((t-s)^{H-1}+(t-s)^{2H-\frac{3}{2}}\right)\cdot \|\tilde{u}^{\e,n}_{\eta}(s,\cdot)\|^2_{L^p_{\lambda}(\Omega\times\RR)}ds\\
&+\,\int^t_0(t-s)^{H-1}\cdot \left[\mathcal{N}^*_{\frac{1}{2}-H,\,p}\tilde{u}^{\e,n}_{\eta}(s)\right]^2ds.
\end{split}
\end{equation}

 \textbf{Step 4.} 
By \eqref{511.511} and \eqref{512.512}, we obtain that
\begin{equation}\label{eq u e51}
\begin{split}
&\|\Phi_{1,{\eta}}^{\e,n+1}(t,\cdot)\|^2_{L^p_{\lambda}(\Omega\times\mathbb{R})}
+\left[\mathcal{N}^*_{\frac{1}{2}-H,\,p}\Phi^{\e,n+1}_{1,{\eta}}(t)\right]^2\\
\lesssim&\,1+\int^t_0\left((t-s)^{H-1}+(t-s)^{-\frac{1}{2}}\right)\cdot \|\tilde{u}^{\e,n}_{\eta}(s,\cdot)\|^2_{L^p_{\lambda}(\Omega\times\RR)}ds\\
&+\,\int^t_0\left((t-s)^{H-1}+(t-s)^{-\frac{1}{2}}\right)\cdot\left[\mathcal{N}^*_{\frac{1}{2}-H,\,p}\tilde{u}^{\e,n}_{\eta}(s)\right]^2ds.
\end{split}
\end{equation}
 For any $t\ge0$, let
 $$\tilde{\Psi}^{\e,n}_{\eta}(t):=\|\tilde{u}^{\e,n}_{\eta}(t,\cdot)\|^2_{L^p_{\lambda}(\Omega\times\RR)}+\left[\mathcal{N}^*_{\frac{1}{2}-H,\,p}\tilde{u}^{\e,n}_{\eta}(t)\right]^2.$$
By \eqref{eq SPDE Y-1_n_e1},    \eqref{eq u e52} amd \eqref{eq u e51}, there exists a constant
  $C_{T,p,H,N}>0$ such that
\begin{align*}
\tilde{\Psi}^{\e,n+1}_{\eta}(t)\leq \,C_{T,p,H,N}\left(1+\int^t_0(t-s)^{2H-\frac{3}{2}}\cdot \tilde{\Psi}^{\e,n}_{\eta}(s)ds\right).
\end{align*}
By the extension of Gr\"onwall's lemma   {\cite[lemma 15]{Dalang1999}}, we have
$$
\sup_{n\geq1}\sup_{t\in[0,T]}\tilde{\Psi}^{\e,n}_{\eta}(t)\leq\, C_g,
  $$
  where $C_g$ is a constant independent of  $\eta\in (0,\infty)$  and $\e\in (0,1)$.

By using  the same argument as that in Step 3 of  the proof of \cite[Lemma 4.5]{HW2019} (or  Step 4 in the proof of Lemma \ref{UniBExist}), we have
\begin{align}\label{eq Zp sec 5}
\displaystyle\sup_{{\eta}>0}\|\tilde{u}^{\e}_{\eta}\|_{\mathcal{Z}^p_{\lambda,T}}\leq C_g\ \ \ \ \text {for any } p\ge p_0,
\end{align}
where $C_g$ is a constant independent of $\e\in(0,1)$.

By using the same methods as that in the proof of \cite[Lemma 4.7 (ii), (iii)]{HW2019} and Lemma \ref{TimeSpaceRegBdd} (ii), (iii),
we have that, for any   $R>0$ and $\gamma>0$, there exists $\theta>0$ such that for each $i=1,2$,
\begin{align*}
\lim_{\theta\downarrow 0}\mathbb{P}\left(\left\{\Phi_{i,{\eta}}^{\e}\in{\mathcal C}([0,T]\times\RR): m^{T,R}\left(\Phi_{i,{\eta}}^{\e},\theta\right)>\gamma\right\}\right)=0,
\end{align*}
where
\begin{align*}
     m^{T,R}\left(\Phi_{i,{\eta}}^{\e},\theta\right):= \max\limits_{\substack{|t-s|+|x-y|\leq \theta, \\ 0\leq t,s \leq T,\,-R\leq x, y\leq R }}|\Phi_{i,{\eta}}^{\e}(t,x)-\Phi_{i,{\eta}}^{\e}(s,y)|.
     \end{align*}

By Lemma \ref{TechLemma7}, the family $\{\tilde{u}^{\e}_{\eta}\}_{\eta>0}$ is tight on the space ${\mathcal C}([0,T]\times\RR)$. Thus,
$\tilde{u}^{\e}_{\eta}\rightarrow \tilde{u}^{\e}$ almost surely in the space $\left(\mathcal{C}([0,T]\times\RR),d_\mathcal{C}\right)$ as $\eta\rightarrow 0$. By \eqref{eq Zp sec 5} and \cite[Lemma 4.6]{HW2019}, we have 
$$\displaystyle\sup_{{\e}\in(0,1)}\|\tilde{u}^{\e}\|_{\mathcal{Z}^p_{\lambda,T}}<\infty, \,\,\,   \text{for any  } p\ge p_0.$$
The proof is complete.
\end{proof}

 For  any $u\in \mathcal{Z}^p_{\lambda,T}$ and $g\in \mathcal U^N$, let 
\begin{align}\label{Phi2definition}
Y(t,x):=\,\int^t_0\left\langle p_{t-s}(x-\cdot)\sigma(s,\cdot,u(s,\cdot)), g(s,\cdot)\right\rangle_{\HH}ds.
\end{align}
 
By using Lemma \ref{TimeSpaceRegBdd} and Minkowski's
inequality, we have the following results.
  {\begin{lemma}\label{lemma-5.2}
 Assume that   $\sigma$ satisfies the hypothesis $(\mathbf{H})$ for some constant $p_0>\frac{6}{4H-1}$. Then we have the following results: 
\begin{itemize}
\item[(i).]for any $p>p_0$,  there exists a constant $ C_{T, p,H,N}>0$  such that
\begin{align} \label{4-96-bound}
\left\|\sup_{t\in[0,T],\,x\in\mathbb{R}}
\lambda^{\frac{1}{p}}(x)\mathcal{N}_{\frac{1}{2}-H}Y(t,x)\right\|_{L^p(\Omega)}\leq \, C_{T,p,H,N}\left(1+\left\|u\right\|_{\mathcal{Z}^p_{\lambda,T}}\right);
\end{align}
    \item[(ii).] if {\bf$p>\frac{3}{H}$} and {\bf$0<\gamma<\frac{H}{2}-\frac{3}{2p}$}, then there exists a positive constant {\bf$C_{T,p,H,N,\gamma}$} such that
  \begin{equation}\label{eq lem51}
  \begin{split}
    \left\|\sup\limits_{\substack{t,\,t+h\in[0,T], \\ x\in \RR}}\lambda^{\frac{1}{p}}(x)\big[Y(t+h,x)  -Y(t,x) \big]\right\|_{L^p(\Omega)}  
   \leq  \,C_{T,p,H,N,\gamma}|h|^\gamma \cdot\left(1+ \left\|u \right\|_{\mathcal{Z}^p_{\lambda,T}}\right);
   \end{split}
   \end{equation}
   \item[(iii).]if {\bf$p>\frac{3}{H}$} and {\bf$0<\gamma<H-\frac{3}{p}$}, then  there exists a positive constant {\bf$C_{T,p,H,N,\gamma}$} such that
   \begin{equation}\label{eq lem52}
   \begin{split}
   \left\|\sup\limits_{\substack{t\in[0,T], \\ x,y\in \RR}}\frac{Y(t,x)- Y(t,y) }{\lambda^{-\frac{1}{p}}(x)+\lambda^{-\frac{1}{p}}(y)}\right\|_{L^p(\Omega)} 
   \leq   \,C_{ T,p,H,N,\gamma}|x-y|^\gamma \cdot \left(1+ \left\|u\right\|_{\mathcal{Z}^p_{\lambda,T}}\right).
   \end{split}
   \end{equation}
\end{itemize}
\end{lemma}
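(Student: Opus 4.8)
The plan is to repeat the proof of Lemma~\ref{TimeSpaceRegBdd} almost verbatim, the only genuinely new point being the passage to the $L^p(\Omega)$ norm, which will be effected by Minkowski's integral inequality. Two remarks make the transfer routine. First, since $g\in\mathcal U^N$ we have $\int_0^T\|g(s,\cdot)\|_{\HH}^2ds\le 2N$ almost surely, so every pathwise estimate that used the bound on $\int_0^T\|g\|_{\HH_\e}^2ds$ in Lemma~\ref{TimeSpaceRegBdd} remains valid for a.e.\ $\omega$, with $N$ absorbed into the constants. Second, $Y$ is defined with the full inner product $\langle\cdot,\cdot\rangle_{\HH}$, so the double-integral representation \eqref{eq H product} of the $\HH$-norm is available directly (no recourse to the comparison $\|\cdot\|_{\HH_\e}\le\|\cdot\|_{\HH}$ is needed).

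For part~(i), I would first fix $\alpha$ with $\frac{3}{2p}<\alpha<H-\frac14$ (possible since $p>p_0>\frac{6}{4H-1}$) and apply the factorization identity as in \eqref{461-transform} to write
\[
Y(t,x)\simeq\int_0^t\int_\RR(t-r)^{\alpha-1}p_{t-r}(x-z)\,J_\alpha(r,z)\,dzdr,\qquad J_\alpha(r,z):=\int_0^r(r-s)^{-\alpha}\langle p_{r-s}(z-\cdot)\sigma(s,\cdot,u(s,\cdot)),g(s,\cdot)\rangle_{\HH}\,ds .
\]
Carrying out the H\"older estimate of \eqref{468} for each fixed $\omega$ gives the pathwise bound
\[
\sup_{t\in[0,T],\,x\in\RR}\lambda^{\frac1p}(x)\,\cN_{\frac12-H}Y(t,x)\ls\Big(\int_0^T\int_\RR\big[\textstyle\int_\RR|\Delta_hJ_\alpha(r,z)|^2|h|^{2H-2}dh\big]^{\frac p2}\lambda(z)\,dzdr\Big)^{\frac1p},
\]
whose right-hand side is independent of $t,x$. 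Taking $L^p(\Omega)$ norms and using Tonelli's theorem reduces \eqref{4-96-bound} to estimating $\EE\int_0^T\int_\RR[\int_\RR|\Delta_hJ_\alpha|^2|h|^{2H-2}dh]^{p/2}\lambda\,dzdr$. Splitting $\Delta_hJ_\alpha$ into the three pieces $\mathcal J_1,\mathcal J_2,\mathcal J_3$ of Lemma~\ref{TimeSpaceRegBdd} and inserting $\EE$ inside each spatial integral by Minkowski's integral inequality, the bounds \eqref{BoundMathcalJ1}--\eqref{BoundMathcalJ3} reproduce themselves with $\|u(s,\cdot)\|_{L^p_\lambda(\RR)}$ and $\cN^*_{\frac12-H,\,p}u(s)$ replaced by their stochastic analogues $\|u(s,\cdot)\|_{L^p_\lambda(\Omega\times\RR)}$ and $\cN^*_{\frac12-H,\,p}u(s)$; the convergent time integrals then yield \eqref{4-96-bound}.

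For parts~(ii) and~(iii) I would follow the decompositions in the proof of Lemma~\ref{TimeSpaceRegBdd}(ii),(iii): with $\alpha<\frac H2$ chosen, split $Y(t+h,x)-Y(t,x)=\mathcal K_1+\mathcal K_2+\mathcal K_3$ as in \eqref{3.405.7}, and bound $Y(t,x)-Y(t,y)$ by the spatial kernel-difference factor as in \eqref{481-x-y}. The role of \eqref{J-12-Third} is now played by its stochastic counterpart
\[
\EE\big\|J_\alpha(r,\cdot)\big\|^p_{L^p_\lambda(\RR)}\ls 1+\|u\|^p_{\mathcal Z^p_{\lambda,T}},\qquad\alpha<\tfrac H2,
\]
obtained exactly as in \eqref{J-12-First}--\eqref{J-12-Third} by placing $\EE$ in front and treating the three terms $\mathcal B_1,\mathcal B_2,\mathcal B_3$ with Minkowski's inequality. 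Since the remaining kernel factors $\mathcal K_{11},\mathcal K_2,\mathcal K_3$ are deterministic, they are estimated precisely as in \eqref{J-11-First}, \eqref{J-2-First}, \eqref{J-3-First}, and after taking $L^p(\Omega)$ norms (the pathwise bounds being linear in $(\int_0^T\|J_\alpha(r,\cdot)\|^p_{L^p_\lambda(\RR)}dr)^{1/p}$) one obtains the H\"older factors $|h|^\gamma$ for $0<\gamma<\frac H2-\frac{3}{2p}$ and $|x-y|^\gamma$ for $0<\gamma<H-\frac3p$, which are \eqref{eq lem51} and \eqref{eq lem52}.

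The main obstacle is the bookkeeping in the interchange of the $L^p(\Omega)$ norm with the several space-time integrals: at each occurrence one must apply Minkowski's integral inequality in the correct direction and verify that the stochastic versions of the $J_\alpha$ and $\Delta_hJ_\alpha$ estimates hold with the $\mathcal Z^p_{\lambda,T}$-norm of $u$ on the right. Once these interchanges are justified, the argument is identical to the deterministic one already carried out for Lemma~\ref{TimeSpaceRegBdd}, and no new analytic input beyond the technical Lemmas~\ref{TechLemma1}, \ref{TechLemma2} and~\ref{TechLemma5} is required.
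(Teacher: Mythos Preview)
Your proposal is correct and matches the paper's approach exactly: the paper itself simply states that Lemma~\ref{lemma-5.2} follows ``by using Lemma~\ref{TimeSpaceRegBdd} and Minkowski's inequality,'' which is precisely the strategy you outline in detail. Your expanded account of how the $L^p(\Omega)$ norm is threaded through the factorization and the $\mathcal J_i$, $\mathcal B_i$, $\mathcal K_i$ estimates via Minkowski's integral inequality is a faithful fleshing-out of that one-line justification.
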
}

 Recall  $\tilde u^{\e}$ and $\bar u^{\e}$ defined by \eqref{eq SPDE Y-1} and \eqref{eq SPDE Z-1}, respectively. For any $k\geq 1$ and any $p\geq p_0$, where $p_0$ is defined by \eqref{DuSigamAdd}, define the  stopping time
\begin{equation}\label{Stoptime1}
\begin{split}
\tau_k:=\inf\bigg\{&r\ge 0:\,\,\sup_{0\leq s\leq r,\,x\in\mathbb{R}}
\lambda^{\frac{1}{p}}(x)\mathcal{N}_{\frac{1}{2}-H}\tilde{u}^{\e}(s,x)\geq k,\\
&\,  \text{or }\,\,\sup_{0\leq s\leq r,\,x\in\mathbb{R}}
\lambda^{\frac{1}{p}}(x)\mathcal{N}_{\frac{1}{2}-H}\bar{u}^{\e}(s,x)\geq k\bigg\}.
\end{split}
\end{equation}
By \cite[Lemma 4.7]{HW2019},  Lemmas  \ref{lemma-5-1} and \ref{lemma-5.2}, we know that 
$$
\sup_{\e\in (0,1)}\left\|\sup_{t\in[0,T],\,x\in\mathbb{R}}
\lambda^{\frac{1}{p}}(x)\mathcal{N}_{\frac{1}{2}-H}\tilde u^{\e}(t,x)\right\|_{L^p(\Omega)}<\infty, \  \ \ 
\sup_{\e\in (0,1)}\left\|\sup_{t\in[0,T],\,x\in\mathbb{R}}
\lambda^{\frac{1}{p}}(x)\mathcal{N}_{\frac{1}{2}-H}\bar u^{\e}(t,x)\right\|_{L^p(\Omega)}<\infty.
$$
Those, together with Chebychev's inequality,  imply that  \begin{equation}\label{stop-infinit}
\tau_k\uparrow \infty,\text{ a.s., as } k\rightarrow \infty.
\end{equation}  For any $t\in [0,T]$, let 
\begin{align}
\tilde{u}^{\e}_k(t,\cdot):=\tilde{u}^{\e}(t\wedge \tau_k,\cdot),\ \ \ \ \ \bar{u}^{\e}_k(t,\cdot):=\bar{u}^{\e}(t\wedge \tau_k,\cdot).
\end{align} Obviously,  when $\tau_k> T$,
$\tilde{u}^{\e}_k(t,\cdot)=\tilde{u}^{\e}(t,\cdot)$ and  $\bar{u}^{\e}_k(t,\cdot)=\bar{u}^{\e}(t,\cdot)$   for any $t\in [0,T]$.

\begin{lemma}\label{utildebar}
  {Assume that   $\sigma$ satisfies the hypothesis $(\mathbf{H})$ for some constant  $p_0>\frac{6}{4H-1}$. Then for  any $p\ge p_0$,}
\begin{align}\label{u-d-c-E}
\lim_{\e\rightarrow0}\big\|\tilde{u}_k^{\e}-\bar{u}_k^{\e}\big\|_{\mathcal{Z}_{\lambda,T}^p}=0.
\end{align}
\end{lemma}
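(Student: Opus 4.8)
The plan is to derive a closed integral inequality for the two constituents of the $\mathcal{Z}^p_{\lambda,T}$-norm of the difference $w^{\e}_k:=\tilde{u}^{\e}_k-\bar{u}^{\e}_k$ and then to invoke the fractional Gr\"onwall lemma. Subtracting \eqref{eq SPDE Z-1} from \eqref{eq SPDE Y-1} and stopping at $\tau_k$, one sees that for $s\le \tau_k$ the solutions coincide with their stopped versions, so that $w^{\e}_k$ solves a mild equation whose right-hand side splits into a stochastic convolution carrying the prefactor $\sqrt{\e}$ and a drift term built from the increment $\sigma(s,\cdot,\tilde{u}^{\e}(s,\cdot))-\sigma(s,\cdot,\bar{u}^{\e}(s,\cdot))$ tested against $g^{\e}$ in $\HH$. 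Accordingly I set
\begin{align*}
\tilde{S}_1(t) := \big\|w^{\e}_k(t,\cdot)\big\|^2_{L^p_\lambda(\Omega\times\RR)}, \qquad \tilde{S}_2(t) := \big[\mathcal{N}^*_{\frac12-H,\,p}w^{\e}_k(t)\big]^2,
\end{align*}
so that $\|w^{\e}_k\|^2_{\mathcal{Z}^p_{\lambda,T}} \lesssim \sup_{t\in[0,T]}\big(\tilde{S}_1(t)+\tilde{S}_2(t)\big)$, and I estimate the two contributions separately.

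For the stochastic convolution I would apply the Burkholder--Davis--Gundy inequality of Proposition \ref{HBDG} together with the linear growth bound \eqref{lineargrowthuniformlycond}, expanding the resulting $\HH$-norms exactly as in Steps 2 and 3 of the proof of Lemma \ref{lemma-5-1}. Because the integrand is evaluated at $\tilde{u}^{\e}$, whose $\mathcal{Z}^p_{\lambda,T}$-norm is bounded uniformly in $\e$ by Lemma \ref{lemma-5-1}, and because of the overall $\sqrt{\e}$ prefactor, this part contributes a \emph{free} term of size $C_{k}\,\e$ to both $\tilde{S}_1$ and $\tilde{S}_2$, with $C_k$ independent of $\e$. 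This is the only place where the smallness of $\e$ enters, and it is the mechanism that forces the limit to be zero.

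The drift term is the substantive one and is handled in parallel with the uniqueness argument of Proposition \ref{thm solu skeleton}. Using the Cauchy--Schwarz inequality in $\HH$ and the control bound $\int_0^T\|g^{\e}(s,\cdot)\|^2_{\HH}ds\le 2N$, I reduce matters to estimating $\int_0^t\|p_{t-s}(x-\cdot)[\sigma(\tilde{u}^{\e})-\sigma(\bar{u}^{\e})]\|^2_{\HH}ds$ and its spatial increments, whose expansion produces the same three pieces as in \eqref{p_tsxcdottriangleg_n_mathcalH}. The delicate contribution is the one coming from the increment $|\triangle(s,y,y+h)-\triangle(s,y,y)|^2$; here I invoke \eqref{490-490}, which through the extra Lipschitz hypothesis \eqref{DuSigamAdd} bounds it by a pure increment of $w^{\e}_k$ plus a cross term $\lambda^{2/p_0}(y)\,|w^{\e}_k(s,y)|^2\cdot\big[|\text{increment of }\tilde{u}^{\e}|^2+|\text{increment of }\bar{u}^{\e}|^2\big]$. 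After integrating against $\lambda$ and applying Lemmas \ref{TechLemma1}--\ref{TechLemma5}, the factor $\lambda^{2/p_0}(y)$ splits off the weights $\lambda^{1/p}(y)$ needed to absorb the spatial increments of $\tilde{u}^{\e}$ and $\bar{u}^{\e}$ in the form $\lambda^{1/p}(y)\mathcal{N}_{\frac12-H}\tilde{u}^{\e}(s,y)$ and $\lambda^{1/p}(y)\mathcal{N}_{\frac12-H}\bar{u}^{\e}(s,y)$, which are bounded by the constant $k$ on $[0,\tau_k]$ by the very definition \eqref{Stoptime1}. This is exactly the role played by \eqref{Nug}--\eqref{Nvg} in the deterministic uniqueness proof, and it is the reason the truncation at $\tau_k$ is indispensable. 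Collecting the pieces yields
\begin{align*}
\tilde{S}_1(t)+\tilde{S}_2(t)\;\le\; C_k\,\e \;+\; C_k\int_0^t\Big((t-s)^{H-1}+(t-s)^{2H-\frac32}\Big)\big(\tilde{S}_1(s)+\tilde{S}_2(s)\big)\,ds,
\end{align*}
with $C_k$ independent of $\e$.

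Finally, since $\tilde{S}_1$ and $\tilde{S}_2$ are uniformly bounded on $[0,T]$ by Lemma \ref{lemma-5-1}, the fractional Gr\"onwall lemma \cite[Lemma 7.1.1]{Herry1981} applied to the integrable singular kernel $(t-s)^{H-1}+(t-s)^{2H-\frac32}$ (note $2H-\frac32>-1$ since $H>\frac14$) gives $\sup_{t\in[0,T]}\big(\tilde{S}_1(t)+\tilde{S}_2(t)\big)\le C'_k\,\e$, whence $\|\tilde{u}^{\e}_k-\bar{u}^{\e}_k\|_{\mathcal{Z}^p_{\lambda,T}}\le (C'_k\,\e)^{1/2}\to 0$ as $\e\to 0$. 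I expect the main obstacle to be the bookkeeping in the drift cross term: one must track carefully how the weight $\lambda^{2/p_0}$ supplied by \eqref{DuSigamAdd} distributes into two factors $\lambda^{1/p}$, each controlling one of the pointwise $\mathcal{N}_{\frac12-H}$ quantities (which forces $p\ge p_0$), while the residual weight is integrated against the heat-kernel increments via Lemmas \ref{TechLemma1}--\ref{TechLemma5}. It is precisely this interplay between the non-standard hypothesis \eqref{DuSigamAdd} and the stopping-time bound that makes the estimate close.
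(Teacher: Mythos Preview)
Your proposal is correct and follows essentially the same route as the paper: decompose $\tilde{u}^{\e}_k-\bar{u}^{\e}_k$ into a stochastic convolution (whose $\mathcal{Z}^p_{\lambda,T}$-norm is uniformly bounded by Lemma \ref{lemma-5-1} and contributes the free term of order $\e$ via the prefactor $\sqrt{\e}$) and a drift term estimated exactly as in the uniqueness part of Proposition \ref{thm solu skeleton}, with the stopping time \eqref{Stoptime1} supplying the pointwise bound $k$ on $\lambda^{1/p}\mathcal{N}_{\frac12-H}$ in the cross term from \eqref{490-490}, after which the fractional Gr\"onwall lemma closes the estimate. One minor correction to your bookkeeping: the weight $\lambda^{2/p_0}(y)$ does not split into \emph{two} factors $\lambda^{1/p}$; rather, each of the two increment terms in \eqref{490-490} carries the full $\lambda^{2/p_0}(y)$, and since the stopping time gives $[\mathcal{N}_{\frac12-H}u(s,y)]^2\le k^2\lambda^{-2/p}(y)$, the leftover factor is $\lambda^{2/p_0-2/p}(y)$, which is uniformly bounded precisely when $p\ge p_0$.
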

\begin{proof} 
Let
\begin{align}\label{Phi1definition}
\Phi_{1}^{\e}(t,x):=\,\int^{t }_0\int_{\RR}p_{t-s}(x-y)\sigma(s,y,\tilde{u}_k^{\e}(s,y)){W}(ds,dy), 
\end{align}
 and 
\begin{equation}\label{Phi3definition}
\begin{split}
 \Phi_2^{\e}(t,x):=\int^{t}_0\left\langle p_{t-s}(x-\cdot)\triangle(s,\cdot,\cdot). g^{\e}(s,\cdot)\right\rangle_{\HH}ds.
\end{split}
\end{equation} 
where  $\triangle(t,x,y):=\sigma(t,x,\tilde{u}^{\e}_k(t,y))-\sigma(t,x,\bar{u}^{\e}_k(t,y))$.
Then, by \eqref{eq SPDE Y-1} and \eqref{eq SPDE Z-1}, we have
\begin{align}\label{5128-inequality}
\big\|\tilde{u}_k^{\e}-\bar{u}_k^{\e}\big\|_{\mathcal{Z}_{\lambda,T}^p}\leq\,\sqrt{\e}
\big\|\Phi_1^{\e}\big\|_{\mathcal{Z}_{\lambda,T}^p}+\big\|\Phi_2^{\e}\big\|_{\mathcal{Z}_{\lambda,T}^p}.
\end{align}
According to Lemmas 4.5 and  4.6   in \cite{HW2019}, we have $\big\|\Phi_1^{\e}\big\|_{\mathcal{Z}_{\lambda,T}^p}<\infty$ for any $p\ge p_0$. Now, it remains to give an estimate for $\big\|\Phi_2^{\e}\big\|_{\mathcal{Z}_{\lambda,T}^p}$.
 
By using the same technique as that  in the proof of  \eqref{511.511}, we have
\begin{equation}\label{5139}
\begin{split}
&\left\| \Phi_2^{\e}(t,\cdot)\right\|^2_{L^p_{\lambda}(\Omega\times\RR)}\\
\lesssim&\,
\int^t_0\left((t-s)^{H-1}+(t-s)^{-\frac{1}{2}}\right)\cdot
\big\|  \tilde{u}_k^{\e}(s,\cdot)-\bar{u}_k^{\e}(s,\cdot) \big\|_{L^p_{\lambda}(\Omega\times\RR)}^2ds\\
&+\int^t_0(t-s)^{-\frac{1}{2}}\cdot \left[\mathcal{N}^*_{\frac{1}{2}-H,\,p}
\left( \tilde{u}_k^{\e}(s)-\bar{u}_k^{\e}(s)\right)\right]^2ds.
\end{split}
\end{equation}

Next, we deal with the term $\mathcal{N}^*_{\frac{1}{2}-H,p}\left( \Phi_2^{\e}(t)\right)$.
Since $g^{\e}\in \mathcal{U}^N$, by the Cauchy-Schwarz inequality and \eqref{eq H product}, we have
\begin{align}\label{Phi_3ePhi_3etxh}
&\mathbb{E}\left[ \big|\Phi_2^{\e}(t,x)-\Phi_2^{\e}(t,x+h)\big|^p\right] \notag\\
=&\,\mathbb{E}\left[ \Big|\int^t_0\big\langle D_{t-s}(x-\cdot,h)\triangle(s,\cdot,\cdot), g^{\e}(s,\cdot)\big\rangle_{\HH}ds\Big|^p\right] \notag\\
\lesssim&\,\mathbb{E}\left[\int^t_0 \big\| D_{t-s}(x-\cdot,h)\triangle(s,\cdot,\cdot)\big\|_{\HH}^2ds \right]^{\frac{p}{2}} \\
\simeq&\,\mathbb{E}\Bigg[\int^t_0\int_{\RR^2} \big| D_{t-s}(x-y-l,h)\triangle(s,y+l,y+l)\notag\\
&\qquad\qquad\qquad\qquad-D_{t-s}(x-y,h)\triangle(s,y,y)\big|^2\cdot |l|^{2H-2}dldyds\Bigg]^{\frac{p}{2}} \notag\\
\lesssim&\,\mathcal{R}_1^{\e}(t,x,h)+\mathcal{R}_2^{\e}(t,x,h)+\mathcal{R}_3^{\e}(t,x,h).\notag
\end{align}
where
\begin{align*}
\mathcal{R}_1^{\e}(t,x,h):=&\,\mathbb{E}\Bigg[\int^t_0\int_{\RR^2} \big| D_{t-s}(x-y-l,h)\big|^2\\
&\qquad\qquad \cdot\big|\triangle(s,y+l,y+l)
 -\triangle(s,y+l,y)\big|^2\cdot | l |^{2H-2}dldyds\Bigg]^{\frac{p}{2}};\\
\mathcal{R}_2^{\e}(t,x,h):=&\,\mathbb{E}\Bigg[\int^t_0\int_{\RR^2} \big| \Box_{t-s}(x-y,-l,h)\big|^2\cdot \big|\triangle(s,y+l,y)\big|^2\cdot|l|^{2H-2}dldyds\Bigg]^{\frac{p}{2}};\\
\mathcal{R}_3^{\e}(t,x,h):=&\,\mathbb{E}\Bigg[\int^t_0\int_{\RR^2} \big| D_{t-s}(x-y,h)\big|^2\cdot \big|\triangle(s,y+l,y)-\triangle(s,y,y)\big|^2 \cdot|l|^{2H-2}dldyds\Bigg]^{\frac{p}{2}}.
\end{align*}
 By  a change of variable, we have
 \begin{align*}
\mathcal{R}_{1}^{\e}(t,x,h)\lesssim&\,
 \mathbb{E}\Bigg[\int^t_0\int_{\RR^2} \big| D_{t-s}(x-y,h)\big|^2\notag \\
 &\qquad\qquad\cdot\big|\tilde{u}^{\e}_k(s,y+l)-\bar{u}^{\e}_k(s,y+l)
 -\tilde{u}_k^{\e}(s,y)+\bar{u}_k^{\e}(s,y)\big|^2\cdot|l|^{2H-2}dldyds\Bigg]^{\frac{p}{2}}\\
&+\,\mathbb{E}\Bigg[\int^t_0\int_{\RR} \big| D_{t-s}(x-y,h)\big|^2\cdot \big|\tilde{u}_k^{\e}(s,y)-\bar{u}_k^{\e}(s,y)\big|^2 \notag \\
&\qquad\qquad\cdot\left(\int_{\RR}\lambda^{\frac{2}{p_0}}(y)\big|\tilde{u}^{\e}_k(s,y+l)-\tilde{u}_k^{\e}(s,y)\big|^2\cdot |l|^{2H-2}dl\right)dyds\Bigg]^{\frac{p}{2}} \\
\lesssim&\,k^p\mathbb{E}\Bigg[\int^t_0\int_{\RR} \big| D_{t-s}(x-y,h)\big|^2\cdot\big|\tilde{u}_k^{\e}(s,y)-\bar{u}_k^{\e}(s,y)\big|^2dyds\Bigg]^{\frac{p}{2}}\\
&+\,\mathbb{E}\Bigg[\int^t_0\int_{\RR} \big| D_{t-s}(x-y,h)\big|^2\cdot \big|\tilde{u}^{\e}_k(s,y)-\bar{u}_k^{\e}(s,y)\big|^2 \notag \\
&\qquad\qquad\cdot\left(\int_{\RR}\lambda^{\frac{2}{p_0}}(y)\big|\bar{u}_k^{\e}(s,y+l)-\bar{u}_k^{\e}(s,y)\big|^2\cdot |l|^{2H-2}dl\right)dyds\Bigg]^{\frac{p}{2}} \\
\lesssim&\,k^p\mathbb{E}\Bigg[\int^t_0\int_{\RR} \big| D_{t-s}(x-y,h)\big|^2\cdot\big|\tilde{u}_k^{\e}(s,y)-\bar{u}_k^{\e}(s,y)\big|^2dyds\Bigg]^{\frac{p}{2}}\\
=:&\,\mathcal{R}_{11}^{\e}(t,x,h)+\mathcal{R}_{12}^{\e}(t,x,h)+\mathcal{R}_{13}^{\e}(t,x,h),
\end{align*}
where $p_0$ is the constant  given by  \eqref{DuSigamAdd}.

By using a change of variable, the Minkowski inequality, Jensen's inequality and Lemma \ref{TechLemma5}, we have
\begin{equation}\label{J21-bound}
\begin{split}
&\int_{\RR}\left(\int_{\RR}\mathcal{R}_{11}^{\e}(t,x,h)\lambda(x)dx\right)^{\frac{2}{p}}\cdot |h|^{2H-2}dh \\
\lesssim&\,\int^t_0\int_{\RR}(t-s)^{H-1}\cdot \Bigg[\int_{\RR^3}(t-s)^{1-H}\big| D_{t-s}(y,h)\big|^2\cdot |h|^{2H-2}\\
&\, \, \cdot \mathbb{E}\bigg[ \big|\tilde{u}_k^{\e}(s,x+l)
 -\bar{u}^{\e}_k(s,x+l) -\tilde{u}_k^{\e}(s,x)+\bar{u}^{\e}_k(s,x)\big|^p\bigg]
\cdot\lambda(x-y)dxdydh\Bigg]^{\frac{2}{p}}\cdot|l|^{2H-2}dlds\\
\lesssim&\,\int^t_0(t-s)^{H-1}\cdot \left[\mathcal{N}^*_{\frac{1}{2}-H,\,p}
\left(  \tilde{u}_k^{\e}(s)-\bar{u}_k^{\e}(s) \right)\right]^2ds,
\end{split}
\end{equation}
and
\begin{align}\label{J2223-bound}
&\int_{\RR}\left(\int_{\RR}\left(\mathcal{R}_{12}^{\e}(t,x,h)+\mathcal{R}_{13}^{\e}(t,x,h)\right)\lambda(x)dx\right)^{\frac{2}{p}}\cdot|h|^{2H-2}dh \notag\\
\lesssim&\,  {k^2}\int^t_0(t-s)^{H-1}\cdot \Bigg[\int_{\RR^3}(t-s)^{1-H}\cdot \big| D_{t-s}(y,h)\big|^2\cdot|h|^{2H-2} \notag\\
&\qquad\cdot\mathbb{E}\left[ \big|\tilde{u}_k^{\e}(s,x)-\bar{u}_k^{\e}(s,x)\big|^p\right]\cdot\lambda(x-y)dxdydh\Bigg]^{\frac{2}{p}}ds \\
\lesssim&\,  {k^2}\int^t_0(t-s)^{H-1}\cdot \left\| \tilde{u}_k^{\e}(s,\cdot)-\bar{u}_k^{\e}(s,\cdot)\right\|^2_{L^p_\lambda(\Omega\times\RR)}ds.\notag
\end{align}
By   \eqref{Lipschitzianuniformlycondu}, we have
\begin{align*}
\mathcal{R}_2^{\e}(t,x,h)\lesssim\,\mathbb{E}\left[\int^t_0\int_{\RR^2} \big| \Box_{t-s}(x-y,-l,h)\big|^2\cdot\big|\tilde{u}_k^{\e}(s,y)-\bar{u}_k^{\e}(s,y)\big|^2 \cdot |l|^{2H-2}dldyds\right]^{\frac{p}{2}}.
\end{align*}
Hence, by a change of variable, Minkowski's inequality, Jensen's inequality and Lemma \ref{TechLemma5}, we have
\begin{equation}\label{J222-bound}
\begin{split}
&\int_{\RR}\left|\int_{\RR}\mathcal{R}_{2}^{\e}(t,x,h)\lambda(x)dx\right|^{\frac{2}{p}}\cdot |h|^{2H-2}dh \\
\lesssim&\,\int^t_0(t-s)^{2H-\frac{3}{2}}\cdot \Bigg((t-s)^{\frac{3}{2}-2H}\cdot\int_{\RR^4}\big| \Box_{t-s}(y,l,h)\big|^2\cdot|l|^{2H-2} |h|^{2H-2} \\
&\cdot\mathbb{E}\big[ \big|\tilde{u}_k^{\e}(s,x)-\bar{u}_k^{\e}(s,x)\big|^p\big]\lambda(x-y)dxdldydh\Bigg)^{\frac{2}{p}}ds \\
\lesssim&\,\int^t_0(t-s)^{2H-\frac{3}{2}}\cdot \left\| \tilde{u}_k^{\e}(s,\cdot)-\bar{u}_k^{\e}(s,\cdot)\right\|^2_{L^p_\lambda(\Omega\times\RR)}ds.
\end{split}
\end{equation}
By   Lemma \ref{TechLemma5}, we have 
\begin{equation}\label{5-155R1}
\begin{split}
&\int_{\RR}\left(\int_{\RR}\mathcal{R}_{3}^{\e}(t,x,h)\lambda(x)dx\right)^{\frac{2}{p}}\cdot |h|^{2H-2}dh \\
\lesssim&\,\int^t_0(t-s)^{H-1}\cdot \left\|\tilde{u}_k^{\e}(s,\cdot)-\bar{u}_k^{\e}(s,\cdot)\right\|^2_{L^p_\lambda(\Omega\times\RR)}ds.
\end{split}
\end{equation}
Putting \eqref{Phi_3ePhi_3etxh}, \eqref{J21-bound}, \eqref{J2223-bound}, (\ref{J222-bound}) and \eqref{5-155R1} together, we have
\begin{equation}\label{5147}
\begin{split}
&\left[\mathcal{N}^*_{\frac{1}{2}-H,p}\left( 
\Phi_2^{\e}(t)\right)\right]^2\\
:=&\int_{\RR}\left[\int_{\RR}\mathbb{E}\left[ \left|\Phi_2^{\e}(t,x)-\Phi_2^{\e}(t,x+h)\right|^p\right]\lambda(x)dx\right]^{\frac{2}{p}}\cdot|h|^{2H-2}dh \\
\lesssim&\,  {k^2}\int^t_0\left((t-s)^{H-1}+(t-s)^{2H-\frac{3}{2}}\right)\cdot \left\| \tilde{u}_k^{\e}(s,\cdot)-\bar{u}_k^{\e}(s,\cdot)\right\|^2_{L^p_\lambda(\Omega\times\RR)}ds\\
&+\int^t_0(t-s)^{H-1}\cdot \left[\mathcal{N}^*_{\frac{1}{2}-H,\,p}
\left( \tilde{u}_k^{\e}(s)-\bar{u}_k^{\e}(s)\right)\right]^2ds .
\end{split}
\end{equation}
  {It follows from Lemma \ref{proposition-4-3} that  $\left\| \tilde{u}^{\e}_k-\bar{u}_k^{\e}\right\|_{\mathcal{Z}_{\lambda,T}^p}$ is finite.}
By \eqref{5128-inequality}, \eqref{5139}, \eqref{5147}  and the fractional Gr\"onwall lemma (  {\cite[Lemma 7.1.1]{Herry1981}}), we have   {that, for any fixed  $k\ge1$ and  $p>p_0$,}
 \begin{align*}
  \lim_{\e\rightarrow0}\big\| \tilde{u}^{\e}_k-\bar{u}^{\e}_k\|_{\mathcal{Z}_{\lambda,T}^p}=0.
\end{align*} 
The proof is complete.
\end{proof}

We now give the proof of Proposition \ref{proposition-4-3}.
\begin{proof}[Proof of Proposition \ref{proposition-4-3}]  {
 By \cite[Lemma 4.7]{HW2019}, Lemmas  \ref{lemma-5-1}, \ref{lemma-5.2}    and 
  \ref{TechLemma7}, we know that  the probability measures on the space $(\mathcal{C}([0,T]\times\RR),\mathcal{B}(\mathcal{C}([0,T]\times\RR)), d_{\mathcal{C}})$ corresponding to the processes    $\{\tilde{u}^{\e}-\bar{u}^{\e}\}_{\e>0}$   are tight. Thus, there is a subsequence $\e_n\downarrow 0$ such that $\tilde{u}^{\e_n}-\bar{u}^{\e_n}$ converges weakly  
  to some stochastic process $Z=\{Z(t,x),  t\in[0,T], x\in \mathbb R\}$ in  $(\mathcal{C}([0,T]\times\RR),\mathcal{B}(\mathcal{C}([0,T]\times\RR)), d_{\mathcal{C}})$. }
  
    {
 On the other hand, for any $k\ge1, p\ge p_0, \gamma>0$,
\begin{equation}\label{eq P}
\begin{split}
& \mathbb P\left(\sup_{0\le t\le T}\int_{\mathbb  R}\left|\tilde{u}^{\e}(t,x)-\bar{u}^{\e}(t,x)\right|^p\lambda(x) dx >\gamma\right)\\
\le\, & \mathbb P\left(\sup_{0\le t\le T}\int_{\mathbb  R}\left|\tilde{u}^{\e}(t,x)-\bar{u}^{\e}(t,x)\right|^p\lambda(x) dx >\gamma, \tau_k>T\right) 
 +\mathbb P\left(\tau_k>T\right)\\
 \le \, &\mathbb P\left(\sup_{0\le t\le T}\int_{\mathbb  R}\left|\tilde{u}_k^{\e}(t,x)-\bar{u}_k^{\e}(t,x)\right|^p\lambda(x) dx >\gamma\right) 
 +\mathbb P\left(\tau_k>T\right).
\end{split}
\end{equation}
First   letting $\e\rightarrow0$ and then letting  $k\rightarrow\infty$,    by  Lemma \ref{utildebar} and \eqref{stop-infinit}, we have     
 $$
\sup_{0\le t\le T}\int_{\mathbb  R}\left|\tilde{u}^{\e}(t,x)-\bar{u}^{\e}(t,x)\right|^p\lambda(x) dx   \rightarrow 0 \text{ in probability,\,\,\,\,  as }\,  \e\rightarrow0.
 $$
 Thus, for any fixed $t\in [0,T]$, the processes  $\{\tilde{u}^{\e}(t, x)(\omega)-\bar{u}^{\e}(t,x)(\omega); (\omega,x)\in \Omega\times\mathbb R\}$ converges in probability in the product probability space $( \Omega\otimes\mathbb R, \mathbb P\otimes\lambda(x)dx )$. This, together with the weak convergence  of $\{\tilde{u}^{\e_n}-\bar{u}^{\e}\}_{n\ge1}$ and the uniqueness of the limit distribution  of  $\tilde{u}^{\e_n}-\bar{u}^{\e_n}$, implies that   $Z(t,x)\equiv0, t\in [0,T], x\in \mathbb R$, almost surely. 
Thus,  as $\e\rightarrow0$,  $\tilde{u}^{\e}-\bar{u}^{\e}$ converges weakly to  $0$ in $(\mathcal{C}([0,T]\times\RR),\mathcal{B}(\mathcal{C}([0,T]\times\RR)), d_{\mathcal{C}})$. Equivalently,    }
       {the sequence of real-valued random variables
$   d_\mathcal{C}(\tilde{u}^{\e},\bar{u}^{\e})$ converges to $0$  in distribution as $\e$ goes to $0$.}
This  implies that as $\e\rightarrow0$,
$$  d_\mathcal{C}(\tilde{u}^{\e},\bar{u}^{\e})\longrightarrow 0  \text{ in probability.} $$
  {See, e.g.,   \cite[p. 98, Exercise 4]{Chung}.}
The proof is complete.
\end{proof}

\section{Appendix}
In this section, we  give some lemmas related to the heat kernel $p_t(x)$. Recall  $\lambda(x)$ defined by \eqref{lamd}, $D_{t}(x,h)$, $\Box_{t}(x,y,h)$   defined by \eqref{TechDt} and \eqref{TechBoxt}, respectively.

\begin{lemma}(\cite[Lemma 2.5]{HW2019})\label{TechLemma1} For any   $T>0$,
   \begin{equation}\label{Glamd}
     \sup_{t\in[0,T]}\sup_{x\in\RR} \frac{1}{\lambda(x)}\int_{\RR} p_{t}(x-y) \lambda(y) dy<\infty.
   \end{equation}
 \end{lemma}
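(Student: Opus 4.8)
The plan is to prove the equivalent bound $(p_t*\lambda)(x)\le C_T\,\lambda(x)$ uniformly in $(t,x)\in[0,T]\times\RR$, where $(p_t*\lambda)(x):=\int_\RR p_t(x-y)\lambda(y)\,dy$. First I would record the elementary global estimate $(p_t*\lambda)(x)\le\|\lambda\|_\infty=\lambda(0)=c_H$, which holds because $\lambda$ attains its maximum at the origin and $\int_\RR p_t=1$. Since $\lambda$ is bounded below by a positive constant on any ball, this already settles the regime of bounded $|x|$: for $|x|\le\sqrt{8T}$ one has $\lambda(x)\ge\lambda(\sqrt{8T})=c_H(1+8T)^{H-1}>0$, so $(p_t*\lambda)(x)/\lambda(x)\le(1+8T)^{1-H}=:C_{T,1}$.

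For the remaining regime $|x|>\sqrt{8T}$ I would split the integral over $\{|y|\ge|x|/2\}$ and $\{|y|<|x|/2\}$. On the first region, the monotonicity of $r\mapsto(1+r^2)^{H-1}$ (recall $H-1<0$) together with $1+|x|^2/4\ge\tfrac14(1+|x|^2)$ yields $\lambda(y)\le4^{1-H}\lambda(x)$, and integrating the Gaussian (total mass one) bounds this contribution by $4^{1-H}\lambda(x)$. On the second region $|x-y|\ge|x|/2$, hence $p_{t}(x-y)\le(4\pi t)^{-1/2}e^{-|x|^2/(16t)}$, and using $\int_\RR\lambda=1$ the contribution is at most $(4\pi t)^{-1/2}e^{-|x|^2/(16t)}$.

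It then remains to dominate this last quantity by $\lambda(x)$ uniformly in $t\in(0,T]$, which is the crux. I would study $\phi_x(t):=t^{-1/2}e^{-|x|^2/(16t)}$: differentiating gives $\phi_x'(t)=t^{-3/2}e^{-|x|^2/(16t)}\bigl(\tfrac{|x|^2}{16t}-\tfrac12\bigr)$, so $\phi_x$ increases on $(0,t^*)$ and decreases on $(t^*,\infty)$ with $t^*=|x|^2/8$. Since $|x|>\sqrt{8T}$ forces $t^*>T$, the map $\phi_x$ is increasing on all of $(0,T]$ and is maximised at $t=T$, giving $(4\pi t)^{-1/2}e^{-|x|^2/(16t)}\le(4\pi T)^{-1/2}e^{-|x|^2/(16T)}$. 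As $(1+|x|^2)^{1-H}e^{-|x|^2/(16T)}$ is continuous and tends to $0$ at infinity, it is bounded by some $C_T$, so $(4\pi T)^{-1/2}e^{-|x|^2/(16T)}\le C_T(1+|x|^2)^{H-1}=C_T\lambda(x)/c_H$. Combining the two regions yields $(p_t*\lambda)(x)\lesssim\lambda(x)$ for $|x|>\sqrt{8T}$, and together with the bounded-$|x|$ estimate the lemma follows.

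The main obstacle is exactly this far-field, uniform-in-$t$ estimate, and the finiteness of the time horizon is essential to it: the unconstrained maximum of $\phi_x$ over all $t>0$ is $\phi_x(t^*)\simeq|x|^{-1}$, which decays strictly slower than $\lambda(x)\simeq|x|^{2H-2}$ (as $2H-2<-1$), so the claim would fail for $t\in(0,\infty)$. It is only the restriction $t\le T$ that pushes the maximiser $t^*=|x|^2/8$ outside $(0,T]$ for large $|x|$, thereby replacing the merely polynomial decay $|x|^{-1}$ by genuine Gaussian decay $e^{-|x|^2/(16T)}$, which comfortably beats $\lambda(x)$.
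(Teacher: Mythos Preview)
Your proof is correct. The paper does not actually prove this lemma: it is quoted from \cite[Lemma 2.5]{HW2019} and stated in the Appendix without argument, so there is no in-paper proof to compare against. Your near/far splitting in $y$, combined with the observation that the maximizer $t^*=|x|^2/8$ of $t\mapsto t^{-1/2}e^{-|x|^2/(16t)}$ lies outside $(0,T]$ once $|x|>\sqrt{8T}$, cleanly yields the Gaussian tail needed to beat the polynomial decay of $\lambda$; your remark that the bound would fail for $T=\infty$ is also accurate and a nice sanity check.
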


\begin{lemma}(\cite[Lemma 2.8]{HW2019})\label{TechLemma2}
For any $H\in (\frac{1}{4},\frac{1}{2})$, there exists some constant $C_H$ such that
\begin{align}\label{Hu-2-17}
\int_{\RR^2}|D_{t}(x,h)|^2\cdot |h|^{2H-2}dhdx= C_Ht^{H-1},
\end{align}
and
\begin{align}\label{Hu-2-18}
\int_{\RR^3}|\Box_{t}(x,y,h)|^2\cdot |h|^{2H-2}\cdot |y|^{2H-2}dydhdx= C_H t^{2H-\frac{3}{2}}.
\end{align}
\end{lemma}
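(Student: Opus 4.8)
The plan is to reduce both identities to one–dimensional Gaussian integrals by Fourier analysis, using that the heat kernel has the explicit transform $\cF p_t(\xi)=e^{-t\xi^2}$ and that $|h|^{2H-2}$ is, up to a constant, the kernel whose Fourier side produces a power of $|\xi|$. The cornerstone is the elementary identity
\[
\int_{\RR}\bigl|e^{i\xi h}-1\bigr|^2\,|h|^{2H-2}\,dh=\kappa_H\,|\xi|^{1-2H},
\qquad
\kappa_H:=\int_{\RR}\frac{2\bigl(1-\cos u\bigr)}{|u|^{2-2H}}\,du,
\]
which I would obtain by writing $|e^{i\xi h}-1|^2=2(1-\cos(\xi h))$ and substituting $u=\xi h$. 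The constant $\kappa_H$ is finite precisely because $H\in(\tfrac14,\tfrac12)$: near the origin $1-\cos u\sim u^2/2$ makes the integrand behave like $|u|^{2H}$, integrable since $2H>0$, while at infinity $2-2H>1$ makes $|u|^{2H-2}$ integrable. This is the same computation that underlies the equivalence of the two representations of $\langle\cdot,\cdot\rangle_{\HHH}$ in Proposition \ref{hSpaceProp}.

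For \eqref{Hu-2-17}, since $\cF D_t(\cdot,h)(\xi)=(e^{i\xi h}-1)e^{-t\xi^2}$, Plancherel's theorem in the variable $x$ gives, for each fixed $h$,
\[
\int_{\RR}|D_t(x,h)|^2\,dx=\frac{1}{2\pi}\int_{\RR}\bigl|e^{i\xi h}-1\bigr|^2 e^{-2t\xi^2}\,d\xi .
\]
Multiplying by $|h|^{2H-2}$ and integrating in $h$, I would apply Tonelli's theorem (all integrands are nonnegative) to exchange the order of integration and then insert the cornerstone formula, arriving at $\tfrac{\kappa_H}{2\pi}\int_{\RR}|\xi|^{1-2H}e^{-2t\xi^2}\,d\xi$. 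The substitution $\eta=\sqrt{2t}\,\xi$ turns this into $(2t)^{H-1}\int_{\RR}|\eta|^{1-2H}e^{-\eta^2}\,d\eta$, and the remaining Gaussian moment is a finite $H$-dependent constant (a value of the Gamma function). This yields $C_H\,t^{H-1}$.

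For \eqref{Hu-2-18}, the key observation is the factorization $\Box_t(x,y,h)=(\tau_y-\mathrm{Id})(\tau_h-\mathrm{Id})p_t(x)$, where $\tau_a$ denotes the shift by $a$; consequently its Fourier transform in $x$ factorizes as $\cF\Box_t(\cdot,y,h)(\xi)=(e^{i\xi y}-1)(e^{i\xi h}-1)e^{-t\xi^2}$. Plancherel in $x$, followed by integration against $|y|^{2H-2}|h|^{2H-2}$ and Tonelli, then decouples the $y$- and $h$-integrals, each contributing a factor $\kappa_H|\xi|^{1-2H}$ by the cornerstone formula, so the whole expression equals $\tfrac{\kappa_H^2}{2\pi}\int_{\RR}|\xi|^{2-4H}e^{-2t\xi^2}\,d\xi$. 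The same scaling $\eta=\sqrt{2t}\,\xi$ gives $(2t)^{2H-\frac32}\int_{\RR}|\eta|^{2-4H}e^{-\eta^2}\,d\eta$, and since $H\in(\tfrac14,\tfrac12)$ ensures $2-4H\in(0,1)$ the last moment is again a finite constant; this produces $C_H\,t^{2H-\frac32}$.

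The computations are routine; the only points requiring care are the justification of Plancherel (legitimate because $p_t$ is Schwartz, so $D_t(\cdot,h)$ and $\Box_t(\cdot,y,h)$ lie in $L^2(\RR,dx)$) and the interchange of integrals, for which Tonelli applies since every integrand is nonnegative, together with the convergence of the fractional and Gaussian integrals, all guaranteed by the range $H\in(\tfrac14,\tfrac12)$. I therefore do not anticipate a genuine obstacle; the main bookkeeping is tracking the explicit constants, which can all be absorbed into a single $H$-dependent constant $C_H$.
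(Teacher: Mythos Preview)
Your approach is correct and is the standard route to these identities: Plancherel in $x$, the factorization of $\cF\Box_t(\cdot,y,h)$, the cosine-kernel identity for $\int_{\RR}|e^{i\xi h}-1|^2|h|^{2H-2}\,dh$, and scaling in $\xi$ all go through exactly as you describe. The paper does not supply its own proof of this lemma; it is simply quoted from \cite[Lemma~2.8]{HW2019}, so there is nothing to compare against here beyond noting that your argument is the natural one and essentially what \cite{HW2019} records.

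One small inaccuracy worth correcting: the finiteness of $\kappa_H$ does not require $H>\tfrac14$. Your own asymptotics already show this: near the origin the integrand is $\sim|u|^{2H}$, integrable for all $H>0$, and at infinity it is $\sim|u|^{2H-2}$, integrable for $H<\tfrac12$. So $\kappa_H<\infty$ on the whole range $0<H<\tfrac12$, and likewise $\int_{\RR}|\eta|^{1-2H}e^{-\eta^2}\,d\eta$ and $\int_{\RR}|\eta|^{2-4H}e^{-\eta^2}\,d\eta$ are finite for all $H\in(0,\tfrac12)$. The restriction $H>\tfrac14$ is irrelevant to this particular lemma; it enters elsewhere in the paper (e.g., in Lemma~\ref{TechLemma2} part \eqref{Hu-2-18} the exponent $2H-\tfrac32$ is still in $(-1,0)$ on all of $(0,\tfrac12)$, but the integrability in time of $(t-s)^{2H-3/2}$ used in the existence proof does need $H>\tfrac14$). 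This does not affect the validity of your proof.
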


\begin{lemma}(\cite[Lemma 2.10]{HW2019})\label{TechLemma3}
For any $t>0$, there exists some constant $C_H$ such that
\begin{align}\label{Hu-2-22}
\int_{\RR}|D_{t}(x,h)|^2\cdot |h|^{2H-2}dh\leq C_H\left(t^{H-\frac{3}{2}}\wedge\frac{|x|^{2H-2}}{\sqrt{t}}\right),
\end{align}
where $0<H<\frac{1}{2}$.
\end{lemma}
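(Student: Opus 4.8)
The plan is to reduce the inequality to a single scale-invariant estimate by exploiting the self-similarity of the heat kernel, and then to analyze that estimate on two complementary regimes. Write $p_t(x)=t^{-1/2}\phi(x/\sqrt t)$ with $\phi(y):=\frac{1}{\sqrt{4\pi}}e^{-y^2/4}$, so that $D_t(x,h)=t^{-1/2}[\phi((x+h)/\sqrt t)-\phi(x/\sqrt t)]$. Substituting $h=\sqrt t\,v$ and setting $u=x/\sqrt t$, a direct change of variables (using $|h|^{2H-2}=t^{H-1}|v|^{2H-2}$ and $dh=\sqrt t\,dv$) gives
\[
\int_{\RR}|D_t(x,h)|^2|h|^{2H-2}\,dh=t^{H-\frac32}\,\Psi\!\left(\frac{x}{\sqrt t}\right),\qquad \Psi(u):=\int_{\RR}|\phi(u+v)-\phi(u)|^2|v|^{2H-2}\,dv.
\]
Since $\frac{|x|^{2H-2}}{\sqrt t}=t^{H-\frac32}(|x|/\sqrt t)^{2H-2}$, the whole claim collapses to the single pointwise bound $\Psi(u)\le C_H\bigl(1\wedge|u|^{2H-2}\bigr)$.

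For the uniform bound $\sup_u\Psi(u)<\infty$ (which produces the factor $t^{H-\frac32}$), I would split the $v$-integral at $|v|=1$. On $\{|v|\le1\}$ the mean value theorem yields $|\phi(u+v)-\phi(u)|\le\|\phi'\|_\infty|v|$, so the integrand is dominated by $\|\phi'\|_\infty^2|v|^{2H}$, which is integrable near the origin because $2H>-1$. On $\{|v|>1\}$ I would bound $|\phi(u+v)-\phi(u)|^2\le4\|\phi\|_\infty^2$ and use $\int_{|v|>1}|v|^{2H-2}\,dv<\infty$, which holds precisely because $H<\frac12$ forces $2H-2<-1$; both bounds are uniform in $u$.

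The decay estimate $\Psi(u)\le C_H|u|^{2H-2}$ for large $|u|$ is where the real work lies, and I would split the integral at $|v|=|u|/2$. On $\{|v|>|u|/2\}$ the monotonicity of $|v|^{2H-2}$ gives $|v|^{2H-2}\le C|u|^{2H-2}$; combining this with $|\phi(u+v)-\phi(u)|^2\le2|\phi(u+v)|^2+2|\phi(u)|^2$ controls the first piece by $C|u|^{2H-2}\|\phi\|_{L^2}^2$ (after the change of variable $w=u+v$), while the second piece, of size $|\phi(u)|^2\int_{|v|>|u|/2}|v|^{2H-2}\,dv\simeq|\phi(u)|^2|u|^{2H-1}$, is absorbed by the Gaussian decay of $\phi(u)$. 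On $\{|v|\le|u|/2\}$ one has $|u+v|\ge|u|/2$, so the mean value theorem together with the Gaussian decay of $\phi'$ gives $|\phi(u+v)-\phi(u)|\le C|v|e^{-c|u|^2}$, whence this contribution is super-exponentially small and harmless. Finally, for $|u|\le1$ the uniform bound and $|u|^{2H-2}\ge1$ give $\Psi(u)\le C\le C|u|^{2H-2}$, so $\Psi(u)\le C_H(1\wedge|u|^{2H-2})$ holds for all $u$; undoing the scaling then yields exactly $C_H(t^{H-\frac32}\wedge|x|^{2H-2}t^{-1/2})$. The main obstacle is this decay bound, and specifically recognizing that the heat-kernel mass concentrates near $u+v\approx0$ (i.e.\ $v\approx-u$), so that the weight $|v|^{2H-2}$ contributes the correct power $|u|^{2H-2}$.
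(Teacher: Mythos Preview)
Your proof is correct. The scaling reduction to the single function $\Psi(u)$ is exact, and both the uniform bound on $\Psi$ and the decay estimate $\Psi(u)\lesssim|u|^{2H-2}$ are handled cleanly; in particular, splitting the decay estimate at $|v|=|u|/2$ and treating the two pieces separately (with the $|\phi(u+v)|^2$ term giving the dominant $|u|^{2H-2}$ contribution and the $|\phi(u)|^2$ term absorbed by Gaussian decay) is the right idea.

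Note that the paper does not supply its own proof of this lemma: it is quoted directly from \cite[Lemma~2.10]{HW2019} in the Appendix. Your self-contained argument via the self-similarity substitution $h=\sqrt t\,v$ is the natural approach and is almost certainly what the cited reference does as well; the only place the hypothesis $H<\tfrac12$ enters is the integrability of $|v|^{2H-2}$ at infinity, which you flagged correctly.
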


\begin{lemma}(\cite[Lemma 2.11]{HW2019})\label{TechLemma4}
For any $t>0$, there exists some constant $C_H$ such that
\begin{align}\label{Hu-2-25}
\int_{\RR^2}|\Box_{t}(x,y,h)|^2\cdot |h|^{2H-2}\cdot |y|^{2H-2}dydh\leq C_H\left(t^{2H-2}\wedge\frac{|x|^{2H-2}}{t^{1-H}}\right).
\end{align}
\end{lemma}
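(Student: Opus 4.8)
The plan is to reduce \eqref{Hu-2-25} to the case $t=1$ by the scaling of the heat kernel and then to split it into a uniform bound and a power decay. Writing $p_t(x)=t^{-1/2}p_1(x/\sqrt t)$ and using the elementary factorization
\[
\Box_t(x,y,h)=\frac{1}{2\pi}\int_{\RR}e^{i\xi x}e^{-t\xi^2}(e^{i\xi y}-1)(e^{i\xi h}-1)\,d\xi ,
\]
one sees that $\Box_t(x,y,h)=t^{-1/2}\Box_1(x/\sqrt t,y/\sqrt t,h/\sqrt t)$. Denoting by $G_t(x)$ the left-hand side of \eqref{Hu-2-25}, the change of variables $y=\sqrt t\,\tilde y,\ h=\sqrt t\,\tilde h$ then yields $G_t(x)=t^{2H-2}G_1(x/\sqrt t)$, the exponent arising as $-1+(2H-2)+1$ from $|\Box_1|^2$, the two weights, and $d\tilde y\,d\tilde h$. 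Hence it suffices to establish the two bounds $G_1(u)\le C_H$ and $G_1(u)\le C_H|u|^{2H-2}$; evaluated at $u=x/\sqrt t$ and multiplied by $t^{2H-2}$ these produce $t^{2H-2}$ and $t^{H-1}|x|^{2H-2}=|x|^{2H-2}/t^{1-H}$, whose minimum is the right-hand side of \eqref{Hu-2-25}.

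For the uniform bound I would use the double-difference structure exhibited by the factorization above, which gives the pointwise estimates $|\Box_1(x,y,h)|\lesssim\min\{|y||h|,\,|y|,\,|h|,\,1\}$: the first from $\Box_1(x,y,h)=\int_0^y\int_0^h p_1''(x+u+v)\,dv\,du$, the middle two from $\Box_1=D_1(x+y,\cdot)-D_1(x,\cdot)$ with $|D_1(a,h)|\lesssim|h|\wedge 1$, and the last from bounding by four copies of $p_1$. Splitting $\RR^2$ into the four regions determined by $|y|\lessgtr1$ and $|h|\lessgtr1$, on each region $|\Box_1|^2|y|^{2H-2}|h|^{2H-2}$ is integrable: near the origin the factors $|y|^2,|h|^2$ beat the singular weights because $2H>-1$, while at infinity the weights are integrable because $2H-2<-1$. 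This gives $G_1(x)\le C_H$.

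For the decay bound I would write $\Box_1(x,y,h)=D_1(x+y,h)-D_1(x,h)$ and set $\Psi(a):=\int_{\RR}|D_1(a,h)|^2|h|^{2H-2}dh$, so $\Psi(a)\le C_H(1\wedge|a|^{2H-2})$ by Lemma \ref{TechLemma3} at $t=1$. For $|y|>1$ the triangle inequality bounds the inner $h$-integral by $2\Psi(x+y)+2\Psi(x)$, and after integrating against $|y|^{2H-2}$ the problem reduces to convolution-of-powers integrals $\int|x+y|^{2H-2}|y|^{2H-2}\,dy$; splitting into $|y|\ll|x|$, $|y|\sim|x|$ and $|y|\gg|x|$ shows each piece is $\lesssim|x|^{2H-2}$, where in the last region one uses $4H-3\le 2H-2$, i.e.\ exactly $H\le\tfrac12$, to absorb $|x|^{4H-3}$ into $|x|^{2H-2}$. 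For $|y|\le1$ the triangle bound fails since $|y|^{2H-2}$ is non-integrable at $0$, so I would exploit cancellation: $D_1(x+y,h)-D_1(x,h)=\int_0^y[p_1'(x+u+h)-p_1'(x+u)]\,du$, and Minkowski's inequality bounds the inner integral by $|y|^2$ times the derivative analogue $\int|p_1'(a+h)-p_1'(a)|^2|h|^{2H-2}dh$, which obeys the same $(1\wedge|a|^{2H-2})$ estimate as $\Psi$ by the argument proving Lemma \ref{TechLemma3}. Since $|x+u|\ge|x|-1$ here, this gives an inner integral $\lesssim|y|^2|x|^{2H-2}$, integrable against $|y|^{2H-2}$ over $|y|\le1$ and contributing $\lesssim|x|^{2H-2}$.

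The hard part will be this decay estimate $G_1(x)\lesssim|x|^{2H-2}$: one must recognize that for large $|x|$ the $L^2$-smallness of the Gaussian away from the origin forces all contributions to come from the \emph{resonant} strips where some kernel argument is $O(1)$, and that the surviving weight there is of order $|x|^{2H-2}$. The two delicate points are the bookkeeping of the power-convolution integrals, where the sign of $2H-1$ is decisive, and the small-$y$ region, which cannot be handled by brute force and genuinely requires the double-difference cancellation together with the derivative version of Lemma \ref{TechLemma3}. By comparison the scaling reduction and the uniform bound are routine.
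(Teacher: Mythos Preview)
The paper does not give its own proof of this lemma: it is stated in the Appendix with a citation to \cite[Lemma 2.11]{HW2019} and nothing more. So there is no in-paper argument to compare against; I can only assess your sketch on its merits.

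Your approach is correct and is essentially the natural one. The scaling reduction $G_t(x)=t^{2H-2}G_1(x/\sqrt t)$ is right (your exponent bookkeeping $-1+(2H-2)+1$ checks out), and it is indeed enough to prove $G_1(u)\le C_H$ together with $G_1(u)\le C_H|u|^{2H-2}$ for $|u|\ge1$, since for $|u|\le1$ the minimum on the right of \eqref{Hu-2-25} equals $t^{2H-2}$ and the uniform bound suffices. Your uniform bound via the four pointwise estimates $|\Box_1|\lesssim\min\{|y||h|,|y|,|h|,1\}$ and the $2\times2$ splitting is clean. For the decay bound, writing $\Box_1(x,y,h)=D_1(x+y,h)-D_1(x,h)$ and invoking Lemma \ref{TechLemma3} at $t=1$ is exactly the right move; your case analysis of the convolution $\int_{|y|>1}(1\wedge|x+y|^{2H-2})|y|^{2H-2}\,dy$ is accurate, including the observation that $H\le\tfrac12$ is what makes $|x|^{4H-3}\le|x|^{2H-2}$ on the tail region. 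The small-$|y|$ region genuinely needs the cancellation you describe: the derivative analogue $\int|p_1'(a+h)-p_1'(a)|^2|h|^{2H-2}dh\lesssim 1\wedge|a|^{2H-2}$ holds by the same proof as Lemma \ref{TechLemma3} (since $p_1'$ is Schwartz with the same Gaussian decay), and Minkowski gives the factor $|y|^2$ after squaring, which is integrable against $|y|^{2H-2}$ near the origin.

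One small clarification: when you write ``Minkowski's inequality bounds the inner integral by $|y|^2$ times the derivative analogue'', make explicit that Minkowski first gives a bound linear in $|y|$ on the $L^2$-norm, and the $|y|^2$ appears only after squaring; as written a reader might look for the $|y|^2$ before squaring. Otherwise the plan is complete.
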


\begin{lemma}(\cite[Lemma 2.12]{HW2019})\label{TechLemma5}
For any $t>0$, there exists some constant $C_{T,H}$ such that
\begin{align}\label{Hu-2-26}
\int_{\RR^2}|D_{t}(x,h)|^2\cdot |h|^{2H-2}\lambda(z-x)dxdh\leq C_{T,H}t^{H-1}\lambda(z),
\end{align}
and
\begin{align}\label{Hu-2-27}
\int_{\RR^3}|\Box_{t}(x,y,h)|^2\cdot |h|^{2H-2}\cdot |y|^{2H-2}\lambda(z-x)dxdydh\leq C_{T,H}t^{2H-\frac{3}{2}}\lambda(z).
\end{align}
\end{lemma}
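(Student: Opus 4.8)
The plan is to reduce both displayed inequalities to a single scalar estimate by first integrating out the variables $h$ (and $y$). Write $g_t(x):=\int_\RR |D_t(x,h)|^2|h|^{2H-2}\,dh$ and $b_t(x):=\int_{\RR^2}|\Box_t(x,y,h)|^2|h|^{2H-2}|y|^{2H-2}\,dydh$, so that the two left-hand sides are exactly $\int_\RR g_t(x)\lambda(z-x)\,dx$ and $\int_\RR b_t(x)\lambda(z-x)\,dx$. Two ingredients are then available for free: from Lemma \ref{TechLemma2}, $\int_\RR g_t(x)\,dx=C_H t^{H-1}$ and $\int_\RR b_t(x)\,dx=C_H t^{2H-\frac32}$; from Lemmas \ref{TechLemma3} and \ref{TechLemma4}, the pointwise decay $g_t(x)\le C_H\big(t^{H-\frac32}\wedge t^{-\frac12}|x|^{2H-2}\big)$ and $b_t(x)\le C_H\big(t^{2H-2}\wedge t^{H-1}|x|^{2H-2}\big)$. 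Since the two cases are structurally identical, I would carry out the $D_t$ estimate \eqref{Hu-2-26} in full and only indicate the trivial modifications needed for $\Box_t$ in \eqref{Hu-2-27}.

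The only facts about the weight that I would use are that $\lambda$ is radially decreasing with $\lambda(x)=c_H(1+|x|^2)^{H-1}\asymp\min(1,|x|^{2H-2})$; in particular $\lambda(z/2)\le 4^{1-H}\lambda(z)$ for every $z$, and $|z|^{2H-2}\le c_H^{-1}2^{1-H}\lambda(z)$ whenever $|z|\ge1$. With these in hand, everything reduces to proving $\int_\RR g_t(x)\lambda(z-x)\,dx\le C_{T,H}\,t^{H-1}\lambda(z)$, and I would split on the size of $z$. When $|z|\le1$ the weight $\lambda(z)$ is bounded below by $\lambda(1)>0$, so the crude bound $\int_\RR g_t(x)\lambda(z-x)\,dx\le \lambda(0)\int_\RR g_t=C_H t^{H-1}\le C t^{H-1}\lambda(z)$ already suffices.

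When $|z|>1$ I would split the $x$-integral at $|x|=|z|/2$. On $\{|x|\le|z|/2\}$ one has $|z-x|\ge|z|/2$, hence $\lambda(z-x)\le\lambda(z/2)\le 4^{1-H}\lambda(z)$; integrating $g_t$ against this constant multiple of $\lambda(z)$ and using $\int_\RR g_t=C_H t^{H-1}$ closes this piece. On $\{|x|>|z|/2\}$ I would discard the bounded branch of the minimum and keep only $g_t(x)\le C_H t^{-\frac12}|x|^{2H-2}\le C_H 2^{2-2H}t^{-\frac12}|z|^{2H-2}$, whence $\int_{|x|>|z|/2}g_t(x)\lambda(z-x)\,dx\le C\,t^{-\frac12}|z|^{2H-2}\int_\RR\lambda=C\,t^{-\frac12}|z|^{2H-2}$, using $\int_\RR\lambda=1$. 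Finally $|z|^{2H-2}\le C\lambda(z)$ and $t^{-\frac12}=t^{H-1}t^{\frac12-H}\le T^{\frac12-H}t^{H-1}$ (valid since $\frac12-H>0$ and $t\le T$) turn this into $C_{T,H}\,t^{H-1}\lambda(z)$. Summing the three contributions yields \eqref{Hu-2-26}. The box inequality \eqref{Hu-2-27} follows verbatim with $g_t,\ t^{H-1},\ t^{-\frac12}$ replaced by $b_t,\ t^{2H-\frac32},\ t^{H-1}$, the only arithmetic check being $t^{H-1}=t^{2H-\frac32}t^{\frac12-H}\le T^{\frac12-H}t^{2H-\frac32}$.

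The one genuinely delicate point, and the main obstacle, is the region $\{x\approx z\}$ with $|z|$ large, where $z-x$ sits near the peak of $\lambda$: there the naive pairing of $\lambda(z-x)\le\lambda(0)$ with the uniform bound $g_t\le C t^{H-\frac32}$ only produces $C t^{H-\frac32}$, losing a factor $t^{\frac12}$ relative to the target $t^{H-1}$. This is precisely where the off-diagonal decay $g_t(x)\lesssim t^{-\frac12}|x|^{2H-2}$ from Lemma \ref{TechLemma3} (respectively Lemma \ref{TechLemma4}) is indispensable, since the smallness of $g_t$ for $|x|\gtrsim|z|$ compensates the largeness of $\lambda(z-x)$. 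The appearance of $T$ in the constant is unavoidable and reflects that the estimate is \emph{not} scale invariant — the fixed, normalized weight $\lambda$ breaks the parabolic scaling of the heat kernel — and it enters exactly through the bound $t^{\frac12-H}\le T^{\frac12-H}$ on $(0,T]$, which in turn relies on $H<\frac12$.
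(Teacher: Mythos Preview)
Your argument is correct. The paper itself does not prove this lemma; it merely cites it from \cite[Lemma~2.12]{HW2019}, so there is no proof in the present paper to compare against. Your reduction to the scalar functions $g_t$ and $b_t$, followed by the near/far split in $x$ relative to $|z|/2$, is exactly the natural route given that Lemmas~\ref{TechLemma2}--\ref{TechLemma4} are already available, and each step checks out: the monotonicity of $\lambda$ handles the near region via $\lambda(z/2)\le 4^{1-H}\lambda(z)$, the pointwise decay $g_t(x)\lesssim t^{-1/2}|x|^{2H-2}$ (resp.\ $b_t(x)\lesssim t^{H-1}|x|^{2H-2}$) handles the far region, and the factor $t^{1/2-H}\le T^{1/2-H}$ accounts for the $T$-dependence of the constant. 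Your closing remark about why the off-diagonal decay is indispensable and why the constant must depend on $T$ is accurate and well observed.
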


\begin{lemma}(\cite[(4.29)]{HW2019},  \cite[(4.32)]{HW2019})
For some fixed $\gamma\in(0,1)$ and  $\alpha\in(0,1)$, the following two inequalities hold:
\begin{align}\label{6-154}
|(t+h)^{\alpha-1}-t^{\alpha-1}|\lesssim|t|^{\alpha-1-\gamma}h^\gamma,
\end{align}
and
\begin{align}\label{6-155}
 |p_{t+h}(x)-p_{t}(x)|
\lesssim  h^\gamma t^{-\gamma}\left[p_{\frac{2}{\gamma}(t+h)}(x)+p_{\frac{2t}{\gamma}}(x)\right].
\end{align}	
\end{lemma}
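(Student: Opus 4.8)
The plan is to derive both estimates by the same mechanism: interpolating a crude zeroth-order bound against a first-order (derivative) bound with weights $1-\gamma$ and $\gamma$, so that the Hölder factor $h^{\gamma}$ emerges automatically. Throughout I take $t,h>0$ and use $\alpha-1<0$, $\gamma\in(0,1)$. For \eqref{6-154} I would first record two elementary facts about the decreasing function $s\mapsto s^{\alpha-1}$. The crude one is
\begin{equation*}
|(t+h)^{\alpha-1}-t^{\alpha-1}|=t^{\alpha-1}-(t+h)^{\alpha-1}\le t^{\alpha-1},
\end{equation*}
and the derivative one, using $\tfrac{d}{ds}s^{\alpha-1}=(\alpha-1)s^{\alpha-2}$ and monotonicity of $s^{\alpha-2}$, is
\begin{equation*}
|(t+h)^{\alpha-1}-t^{\alpha-1}|=(1-\alpha)\int_t^{t+h}s^{\alpha-2}\,ds\le(1-\alpha)\,t^{\alpha-2}h.
\end{equation*}

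Writing the left-hand side as its own $(1-\gamma)$-th power times its $\gamma$-th power and inserting the crude bound in the former factor and the derivative bound in the latter gives
\begin{equation*}
|(t+h)^{\alpha-1}-t^{\alpha-1}|\le\big(t^{\alpha-1}\big)^{1-\gamma}\big((1-\alpha)t^{\alpha-2}h\big)^{\gamma}\lesssim t^{\alpha-1-\gamma}h^{\gamma},
\end{equation*}
since $(\alpha-1)(1-\gamma)+(\alpha-2)\gamma=\alpha-1-\gamma$; this is exactly \eqref{6-154}.

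For \eqref{6-155} I would again combine a crude bound and a derivative bound. From $p_{t+h}(x)-p_t(x)=\int_t^{t+h}\partial_s p_s(x)\,ds$ and the identity $\partial_s p_s(x)=\big(\tfrac{x^2}{4s^2}-\tfrac1{2s}\big)p_s(x)$ one obtains the pointwise bound $|\partial_s p_s(x)|\le \tfrac1s\big(\tfrac{x^2}{4s}+\tfrac12\big)p_s(x)$. The analytic heart is the Gaussian domination estimate
\begin{equation*}
\Big(\frac{x^2}{4s}\Big)^{\beta}p_s(x)\le C_{\beta,c}\,p_{cs}(x),\qquad c>1,\ \beta\ge0,
\end{equation*}
which holds because, with $u=\tfrac{x^2}{4s}$, the function $u^{\beta}e^{-u(1-1/c)}$ is bounded on $[0,\infty)$. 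The exponent $\gamma$ and the time-rescaling $c=2/\gamma$ are coupled precisely through the loss $1-1/c=1-\gamma/2$: interpolating the trivial increment bound $|p_{t+h}(x)-p_t(x)|\le p_{t+h}(x)+p_t(x)$ with weight $1-\gamma$ against the derivative bound with weight $\gamma$ brings out the factor $h^{\gamma}t^{-\gamma}$ together with a polynomial-times-Gaussian factor of the type $\big(\tfrac{x^2}{4s}\big)^{\gamma}p_s(x)$; applying the displayed estimate with $\beta=\gamma$, $c=2/\gamma$ converts this into $p_{2s/\gamma}(x)$, and evaluating at the endpoints $s=t$ and $s=t+h$ produces the sum $p_{2(t+h)/\gamma}(x)+p_{2t/\gamma}(x)$ on the right-hand side of \eqref{6-155}.

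The main obstacle is the second inequality. Estimate \eqref{6-154} is a one-line interpolation, whereas in \eqref{6-155} the delicate points are, first, pairing the correct power $\beta=\gamma$ with the correct rescaling $c=2/\gamma$ in the Gaussian domination step so that the stated kernels appear with exactly these parameters, and second, controlling $p_{cs}(x)$ uniformly for $s\in[t,t+h]$ by the two endpoint kernels, which rests on routine but careful unimodality estimates for $s\mapsto p_{cs}(x)$ and on absorbing the interior maximum into the generic constant. These are precisely the elementary Gaussian computations performed in \cite{HW2019}, whose argument I would follow to pin down the constants.
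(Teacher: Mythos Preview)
The paper does not actually prove this lemma; it is stated in the Appendix with a bare citation to \cite{HW2019}. So there is no in-paper argument to compare against, and your proposal has to stand on its own.

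Your treatment of \eqref{6-154} is correct and clean: the multiplicative interpolation $A=A^{1-\gamma}A^{\gamma}$, with the crude bound $A\le t^{\alpha-1}$ and the mean-value bound $A\le(1-\alpha)t^{\alpha-2}h$, gives exactly $t^{\alpha-1-\gamma}h^{\gamma}$.

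For \eqref{6-155} the strategy is right but the execution has a real gap. The derivative bound you need takes the form
\[
|p_{t+h}(x)-p_t(x)|\le \int_t^{t+h}\frac{C_c}{s}\,p_{cs}(x)\,ds\le \frac{C_c\,h}{t}\,\sup_{s\in[t,t+h]}p_{cs}(x).
\]
When $h>t$ the supremum can be attained at the interior mode $cs=x^2/2$, where $p_{cs}(x)\simeq |x|^{-1}$; in this regime one checks that the ratio of $\sup_{s}p_{cs}(x)$ to either endpoint kernel $p_{ct}(x)$, $p_{c(t+h)}(x)$ is of order $\sqrt{(t+h)/t}$, which is unbounded. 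That factor survives the product interpolation $A^{1-\gamma}A^{\gamma}$ and cannot be ``absorbed into the generic constant'' as you suggest.

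The clean argument is a dichotomy rather than a product interpolation. If $h\le t$, then for all $s\in[t,t+h]$ one has $p_{cs}(x)\le\sqrt{2}\,p_{c(t+h)}(x)$, so the derivative bound gives $|p_{t+h}-p_t|\lesssim (h/t)\,p_{c(t+h)}(x)$, and one concludes via $h/t\le(h/t)^{\gamma}$. If $h>t$, the trivial bound $|p_{t+h}-p_t|\le p_t+p_{t+h}\lesssim p_{2t/\gamma}+p_{2(t+h)/\gamma}$ already suffices, since $1\le(h/t)^{\gamma}$. Together these yield \eqref{6-155}. Note also that your claim that the pairing $\beta=\gamma$, $c=2/\gamma$ is forced is slightly off: in the $h\le t$ branch any $c>1$ works; the specific value $c=2/\gamma$ is chosen only to match the form of the stated right-hand side.
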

\begin{lemma}(\cite[Theorem 4.4]{HW2019})\label{TechLemma7}
A sequence $\{\mathbb{P}_n\}^\infty_{n=1}$ of probability measures on $(\mathcal{C}([0,T]\times\RR),\mathcal{B}(\mathcal{C}([0,T]\times\RR)))$ is tight if and only if the following conditions hold:
 \begin{itemize}
     \item[(i).] $\displaystyle\lim_{\lambda\uparrow\infty} \sup_{n\geq1} \mathbb{P}_n\left(\{\omega\in\mathcal{C}([0,T]\times\RR):|\omega(0,0)|>\lambda\}\right)=0$.
 \item[(ii).] for any   $R>0$ and $\gamma>0$,
 $$
 \lim_{\delta\downarrow0}\sup_{n\geq1}\mathbb{P}_n\left(\left\{\omega\in\mathcal{C}([0,T]\times\RR):m^{T,R}(\omega,\theta)>\gamma\right\}\right)=0  {,}
 $$
 where
 $$
 m^{T,R}(\omega,\theta):=\max\limits_{\substack{|t-s|+|x-y|\leq\theta, \\ 0\leq t,s\leq T;\,-R\leq x, y\leq R}}\left|\omega(t,x)-\omega(s,y)\right|
 $$
 is the modulus of continuity on $[0,T]\times[-R,R]$.
\end{itemize}
\end{lemma}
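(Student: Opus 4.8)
The plan is to deduce this tightness criterion from Prokhorov's theorem combined with a version of the Arzel\`a--Ascoli theorem adapted to the topology of uniform convergence on compact rectangles. The key preliminary observation is that $(\mathcal{C}([0,T]\times\RR),d_{\mathcal C})$ is a Polish space whose topology coincides with uniform convergence on each rectangle $[0,T]\times[-R,R]$, because the metric $d_{\mathcal C}$ in \eqref{e.4.metric} is precisely a weighted sum of the sup-seminorms on these rectangles. By Prokhorov's theorem, tightness of $\{\mathbb P_n\}_{n\ge1}$ is then equivalent to its relative compactness in the weak topology, so the whole statement reduces to describing the relatively compact subsets of $\mathcal{C}([0,T]\times\RR)$ and arranging for them to carry all but $\e$ of the mass.

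First I would record the compactness criterion: a set $A\subset\mathcal{C}([0,T]\times\RR)$ has compact closure if and only if (a) $\sup_{\omega\in A}|\omega(0,0)|<\infty$, and (b) for every $R\in\NN$ the restrictions $\{\omega|_{[0,T]\times[-R,R]}:\omega\in A\}$ are uniformly equicontinuous. This follows from Arzel\`a--Ascoli applied on each compact rectangle, a diagonal extraction along the exhaustion $[0,T]\times\RR=\bigcup_{R\in\NN}\big([0,T]\times[-R,R]\big)$, and the remark that equicontinuity together with a bound at the single point $(0,0)$ propagates to a uniform bound on every rectangle.

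For sufficiency, fix $\e>0$. Condition $(i)$ lets me choose $\Lambda$ with $\sup_{n}\mathbb P_n(|\omega(0,0)|>\Lambda)<\e/2$, and condition $(ii)$, applied with $R=m$ along a fixed sequence $\gamma_m\downarrow0$, lets me choose $\theta_m>0$ with $\sup_{n}\mathbb P_n\big(m^{T,m}(\omega,\theta_m)>\gamma_m\big)<\e\,2^{-(m+1)}$. Taking $K$ to be the closure of the set of $\omega$ with $|\omega(0,0)|\le\Lambda$ and $m^{T,m}(\omega,\theta_m)\le\gamma_m$ for all $m\in\NN$, the criterion above shows $K$ is compact, while a union bound gives $\sup_{n}\mathbb P_n(K^c)<\e$; hence $\{\mathbb P_n\}$ is tight. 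For necessity, tightness yields compact sets $K_\e$ with $\sup_n\mathbb P_n(K_\e^c)<\e$; applying $(a)$--$(b)$ to $K_\e$ produces a uniform bound on $|\omega(0,0)|$ and uniform equicontinuity on each rectangle off an $\e$-set, and letting $\e\downarrow0$ (together with $\Lambda\uparrow\infty$ and $\theta\downarrow0$) delivers exactly $(i)$ and $(ii)$.

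The main obstacle is the non-compactness of the spatial variable $x\in\RR$: the classical tightness criteria in the style of Billingsley are formulated for continuous functions on a compact parameter set under the uniform norm, whereas here the relevant topology is uniform convergence on compacts encoded by $d_{\mathcal C}$. The delicate points are therefore establishing the equivalence between $d_{\mathcal C}$-precompactness and equicontinuity-plus-pointwise-boundedness on every rectangle (an Arzel\`a--Ascoli statement on a $\sigma$-compact domain), and organizing the countably many modulus-of-continuity constraints---one for each $R=m$---into a single compact set while keeping the aggregate exceptional probability below $\e$. Once these are in place, the remainder is a routine Prokhorov packaging.
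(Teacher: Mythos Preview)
Your proof sketch is correct and follows the standard route: Prokhorov reduces tightness to the construction of compact sets, and Arzel\`a--Ascoli on a $\sigma$-compact domain (via the exhaustion by rectangles $[0,T]\times[-R,R]$ built into the metric $d_{\mathcal C}$) characterizes those compact sets in terms of a pointwise bound plus equicontinuity on each rectangle. The packaging via a diagonal choice of $\theta_m$ and a union bound is exactly what is needed.

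Note, however, that the paper does \emph{not} supply its own proof of this lemma: it is stated in the Appendix purely as a citation of \cite[Theorem 4.4]{HW2019}, with no argument given. So there is nothing in the paper to compare your approach against. What you have written is a correct self-contained justification of the cited result, which the paper simply imports.
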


\vskip0.5cm
\noindent{\bf Acknowledgments }The authors are grateful to the anonymous referees for their constructive comments and corrections which have lead to significant improvement of this paper.    The research of R. Li is partially supported by Shanghai Sailing Program grant 21YF1415300 and NNSFC grant 12101392.
  The research of  R. Wang  is partially supported by  NNSFC grants 11871382 and 12071361. The research of B. Zhang is
partially supported by NNSFC grants 11971361 and 11731012.

 \vskip0.5cm

\noindent{\bf Data Availability}  Data sharing is not applicable to this article as no datasets were generated or analyzed during the current study.
\vskip0.5cm
\noindent{\large{\bf Declarations}}
\vskip0.5cm
\noindent{\bf  Conflict of interest} The authors declare  that they have no conflict of interest.
\vskip0.5cm

\noindent{\bf Authors' contribution } \\
Ruinan Li: Writing - Review \& Editing;\\
 Ran Wang: Writing - Review \& Editing; \\
 Beibei Zhang:  Writing - Review \& Editing.

\vskip0.5cm

\end{document}